\newcommand{\der}{\delta}
\newcommand{\hb}[1]{\textcolor{blue}{#1}}
\newcommand{\hp}[1]{{\textcolor{purple}{#1}}}
\newcommand{\iot}{\int_{0}^{t}}
\newcommand{\pt}{\partial}
\newcommand{\bzeta}{{\zeta}}
\DeclareMathOperator*{\argsup}{arg\,sup}
\newcommand{\R}{\mathbb R}
\newcommand{\N}{\mathbb N}
\newcommand{\bp}{\mathbf{P}}
\newcommand{\ca}{\mathcal A}
\newcommand{\cac}{\mathcal C}
\newcommand{\cf}{\mathcal F}
\newcommand{\cg}{\mathcal G}
\newcommand{\ci}{\mathcal I}
\newcommand{\cj}{\mathcal J}
\newcommand{\ck}{\mathcal K}
\newcommand{\cn}{\mathcal N}
\newcommand{\cp}{\mathcal P}
\newcommand{\cq}{\mathcal Q}
\newcommand{\cs}{\mathcal S}
\newcommand{\ct}{\mathcal T}
\newcommand{\cv}{\mathcal V}
\newcommand{\scrc}{\mathcal{T}}
\newcommand{\al}{\alpha}
\newcommand{\ep}{\varepsilon}
\newcommand{\ga}{\gamma}
\newcommand{\gga}{\Gamma}
\newcommand{\ka}{\kappa}
\newcommand{\la}{\lambda}
\newcommand{\laa}{\Lambda}
\newcommand{\si}{\sigma}
\newcommand{\vp}{\varphi}
\newcommand{\lp}{\left(}
\newcommand{\rp}{\right)}
\newcommand{\lc}{\left[}
\newcommand{\rc}{\right]}
\newcommand{\lcl}{\left\{}
\newcommand{\rcl}{\right\}}
\newcommand{\lln}{\left|}
\newcommand{\rrn}{\right|}
\newtheorem{theorem}{Theorem}[section]
\newtheorem{corollary}[theorem]{Corollary}
\newtheorem{definition}[theorem]{Definition}
\newtheorem{hypothesis}[theorem]{Hypothesis}
\newtheorem{lemma}[theorem]{Lemma}
\newtheorem{notation}[theorem]{Notation}
\newtheorem{proposition}[theorem]{Proposition}
\theoremstyle{remark}
\newtheorem{remark}[theorem]{Remark}
\theoremstyle{remark}
\newcommand{\bean}{\begin{eqnarray*}}
\newcommand{\eean}{\end{eqnarray*}}
\newcommand{\ben}{\begin{enumerate}}
\newcommand{\een}{\end{enumerate}}
\newcommand{\beq}{\begin{equation}}
\newcommand{\eeq}{\end{equation}}
\begin{document}

\title[Pathwise Relaxed Optimal Control]{Pathwise Relaxed Optimal Control of Rough Differential Equations}

\author[P. Chakraborty]{Prakash Chakraborty}
\address{P. Chakraborty: Marcus Department of Industrial and Manufacturing Engineering,
The Pennsylvania State University, State College}
\email{prakashc@psu.edu}

\author[H. Honnappa]{Harsha Honnappa}
\address{H. Honnappa: School of Industrial Engineering, Purdue University, West Lafayette}
\email{honnappa@purdue.edu}

\author[S. Tindel]{Samy Tindel}
\address{S. Tindel: Department of Mathematics, 
Purdue University, West Lafayette}
\email{stindel@purdue.edu}

\maketitle

\begin{abstract}
This note lays part of the theoretical ground for a definition of differential systems modeling reinforcement learning in continuous time non-Markovian rough environments. Specifically we focus on optimal relaxed control of rough equations (the term \emph{relaxed} referring to the fact that controls have to be considered as measure valued objects). With reinforcement learning in view, our reward functions encompass forms that involve an entropy-type term favoring exploration. In this context, our contribution focuses on a careful definition of the corresponding relaxed Hamilton-Jacobi-Bellman (HJB)-type equation. A substantial part of our endeavor consists in a precise definition of the notion of test function and viscosity solution for the rough relaxed PDE obtained in this framework. Note that this task is often merely sketched in the rough viscosity literature, in spite of the fact that it gives a proper meaning to the differential system at stake. In the last part of the paper we prove that the natural value function in our context solves a relaxed rough HJB equation in the viscosity sense.
\end{abstract}

\section{Introduction}
This note forms the first stepping stone towards a proper definition of continuous time differential systems modeling reinforcement learning in a rough non-Markovian environment.
In this paper we are concerned with one key ingredient towards this aim, namely pathwise optimal \emph{relaxed} control of rough differential equations. In order to describe the kind of system we are aiming at, consider a path $(x^\gamma_t : t \geq 0)$ which represents the state of a controlled process.  Let also $(\gamma_t : t \geq 0)$ be a curve of relaxed controls; 
that is, a path taking values in the set $\cp(U)$ of probability measures on a given control space $U$.
We are given a general noisy environment, which is modeled by a rough path $(\zeta_{t} : t \geq 0)$.
The state process $x^{\ga}$ is then the solution of the following rough differential equation,
\begin{equation}\label{d1}
x_t^{\ga} = x_0 + \iot \int_{U} b(s, x_s^{\ga}, a) \, \ga_s(da) ds + \iot \si(s, x_s^{\ga}) \, d\zeta_s \, ,
\end{equation}
where the last integral in~\eqref{d1} has to be interpreted in the rough sense (see Section~\ref{sec:rough} for more details about this notion).  More generally we are interested in rough differential equations of the form
\beq\label{eq:rde-1}
x_t^{\ga} = x_0 + \iot  B(s, x_s^{\ga}, \ga_s) \, ds + \iot \si(s, x_s^{\ga}) \, d\zeta_s,
\eeq
where the coefficients $B$ and $\si$ enjoy convenient continuity properties with respect to their arguments. Note that the coefficient $\si$ in~\eqref{eq:rde-1} does not depend on the control $\ga$, in order to avoid degeneracy problems of the kind mentioned in \cite{diehl,allan-cohen}.
With this framework in hand,
the control problem of interest in this paper consists in maximizing an objective functional over $[0,T]$, for a given time horizon $T>0$. This functional takes the form
\beq\label{eq:cont-1}
	J_T(\gamma) \equiv \int_0^T F(s,x_s^\gamma,\gamma_s) ds + G(x_T^\gamma),
	\quad\text{for}\quad \ga\in\mathcal{K} \, ,
\eeq
where $\mathcal K$ is an appropriate class of \emph{admissible} relaxed control curves. In what follows we assume that the running cost rate $F$ is bounded and continuous in $x$ (uniformly over $\gamma \in \mathcal K$) and Lipschitz in $\gamma$, and that the terminal cost $G$ is bounded and continuous. 

Relaxed optimal controls have (classically) been used to argue the existence of optimal controls both in deterministic settings (see \cite{mcshane,lewis-vintner,roubicek}) and stochastic settings (see \cite{fleming-nisio-a,fleming-nisio-b,elkaroui,mazliak-nourdin} under various assumptions in diffusive settings). Roughly speaking, this classical use mainly stems from the compactification and convexification of the control space afforded by the relaxation. In the deterministic setting, the focus is primarily on systems of ordinary differential equations (ODEs). On the other hand, the stochastic literature primarily handles diffusion processes, and is therefore concerned with Markov relaxed controls. 
%Relaxed controls are also immensely useful for computation of optimal control strategies~\cite{lewis-vintner} \hb{(need more citations)}. 

Our own inspiration to use relaxed controls in a reinforcement learning context comes from more recent contributions.  Wang et al~\cite{wang-zari-zhou} and Kim and Yang~\cite{KimYang} observed that relaxed controls form a natural framework for modeling exploration strategies for reinforcement learning in continuous-time settings. In particular, if in equation~\eqref{eq:cont-1} one uses the running cost objective
\begin{equation}\label{d2}
F(t,x,\gamma) = e^{-\rho t} \left( \int_U r(x,u) \dot \gamma_s(u) du -\lambda \int_U \dot \gamma_s(u) \log \dot \gamma_s(u) du \right),
\end{equation}
where we have assumed that the measure $\ga_{s}$ admits a density $\dot \gamma_s$,
this forms an `exploratory' framework to model repetitive learning under exploration. Within this landmark, the multiplier $\lambda > 0$ in~\eqref{d2} captures the tradeoff between exploitation (of the running relaxed control cost $\int_U r(x,u) \dot \gamma_s(u) du$) and exploration (the differential entropy $-\int_U \dot \gamma_s(u) \log \dot \gamma_s(u) du $). This is easily seen to emulate the maximum entropy methods used in discrete-time reinforcement learning (see~\cite{Neu} and references therein). Working in an ordinary differential equation setting, Kim and Yang~\cite{KimYang} derive the optimal relaxed control using the corresponding Hamilton-Jacobi-Bellman (HJB) equation. In the end they obtain the so-called \emph{exploratory} HJB (Feynman-Kac) partial differential equation~\cite{tang-zhang-zhou}, in a nonlinear general cost setting. Wang et al~\cite{wang-zari-zhou}, on the other hand, consider the linear-quadratic controlled diffusion setting, and fully characterize the exploratory HJB and optimal control while considering an expected/stochastic cost formulation. 

With this inspiration in mind, our objective is to generalize both~\cite{KimYang} and~\cite{wang-zari-zhou} to a general noisy driving force. This means that, in particular, we do not tackle directly the aforementioned reinforcement learning problem in a rough environment. In contrast, we focus on a preliminary though necessary question: characterization of the optimal value function and  HJB equation for relaxedly controlled rough differential equations of the form~\eqref{eq:rde-1}, in case of a pathwise control problem minimizing objectives of the form~\eqref{eq:cont-1}.% \textcolor{cyan}{(Should we have corollaries specific to the exploration objective? This would justify this statement)}.
~Specifically, we address the following issues:

\newpage

\begin{enumerate}[wide, labelwidth=!, labelindent=0pt, label=\textnormal{(\roman*)}]
\setlength\itemsep{.1in}
	\item 
	Definition and resolution of an equation of the form~\eqref{d1} under minimal assumptions on the function $s \mapsto b(s,x,a)$. Note that this minimal regularity will be needed when we consider controls which are driven by a rough path. Our main result in this direction is Corollary~\ref{cor:eq-with-gamma} below. 
	
	\item
	Study of the value function $V$ associated to an optimization problem for the cost objective $J_T$ in~\eqref{eq:cont-1}-\eqref{d2}, when the dynamics is given by~\eqref{d1}. This includes the existence of the value function (Proposition~\ref{prop:sup}), as well as regularity properties (Proposition~\ref{prop:V-reg}).
	
	\item
	Proper definition and resolution of a HJB equation, whose unique rough viscosity solution is the value $V$. Informally, this rough PDE takes the following form:
		\[
			\partial_t v(t,y) + \sup_{\gamma \in \mathcal P(U)} \left\{ \nabla v(t,y) \cdot \int b(t,y,a)\gamma(da) + F(t,y,\gamma) \right\} + \nabla v(t,y) \cdot \sigma(t,y)  \dot{\zeta}_t = 0 \, ,
		\]
		with terminal condition $\sigma(t,y) = G(y)$. Observe that $b$ and $\sigma$ are the coefficients appearing in~\eqref{d1}, and that $F$ is the objective function in~\eqref{eq:cont-1}. While this equation is quite natural in our relaxed control context, its rigorous resolution represents a substantial part of our efforts. This encompasses the definition of a set of rough test functions (Definition~\ref{def:j-d}) and rough viscosity solutions (Definition~\ref{def:j-f}). We also show that the value function $V$ solves this HJB equation (Theorem~\ref{thm:v-hjb}). Eventually we prove existence and uniqueness of a solution for the rough HJB equation (Theorem~\ref{thm:ex-uniq}), as well as approximation results (Theorem~\ref{thm:smooth-approx}).
\end{enumerate}

\noindent
As the reader can see, the current paper should be viewed as a theoretical contribution giving a rigorous treatment of relaxed controls of rough differential equations. We believe this to pave the way to a proper treatment of reinforcement learning in general noisy (or rough) environments. In particular,  noisy environments that display rougher and finer path structure than Brownian motion abound in a range of applications. For instance, in both finance and stochastic networks, the control of stochastic dynamics driven by fractional Brownian motion (fBM) are natural. A typical example in the finance literature is the seminal work by \cite{Gatheral}, where in the (log-)volatility is shown to behave essentially like a fBM with Hurst index $< 1/2$. In the stochastic networks literature, heavy-traffic limits of some classes of workload input processes have been shown to have fBM limits with Hurst index $H < 1/2$~\cite{Kurtz} as well as $H \geq 1/2$~\cite{AramanGlynn}. The optimal control of stochastic networks with fBM-type input has been investigated motivated by these insights~\cite{Leao,GhoshWeera,LeeWeera,Dai}. The control of stochastic systems with delay dynamics and full memory present another important class of non-Markovian control problems~\cite{KolShai} (see~\cite{Hoglund} as well).
 However, reinforcement learning (RL) in these types of environments requires the development of new tools, and this paper is a critical contribution to this emerging literature.
 We should thus mention at this point that our main results apply to any continuous stochastic process giving raise to a rough path, with H\"older regularity exponent larger than $1/3$. This obviously include multidimensional  fractional Brownian motions with Hurst parameter $H>1/3$.
 
 We are also strongly motivated by recent developments in the deep learning literature relating deep neural network architectures with control and optimization of differential equation systems. Specifically, recent work by Kidger et al~\cite{Kidger1} and Morrill et al~\cite{Morrill} argue that controlled differential equations can be viewed as `continuum' recurrent neural networks (RNNs) (i.e. in the limit of an infinite number of layers), for learning maps between path spaces. They term these as {\it neural controlled differential equations} (neural CDEs) when the driving signal is deterministic, or as {\it neural stochastic differential equations} (neural SDEs) when the driving signal is stochastic (typically Brownian motion); see~\cite{Kidger2}. Following E et al~\cite{E1,E2} on training neural ODEs, the training problem for neural CDEs/SDEs can potentially be viewed as a mean field optimal control problem.
It is well known that training RNNs is complicated~\cite{Sutskever}, being highly sensitive to initial conditions and the input sample path, and it is anticipated that an optimal control approach may yield more stable training paths. As noted before, the current paper is the first step towards the development of such novel training methodologies for neural CDEs and neural SDEs.

The literature on viscosity solutions for nonlinear rough PDEs is somewhat reduced. If we restrict our attention to HJB type equations, let us mention the seminal article~\cite{lions-souganidis} and the paper~\cite{diehl} on control aspects. One should also include the extension in~\cite{allan-cohen} to settings in which the control also enters the rough integral term. In all those contributions, the solution is defined through a limiting procedure, approximating the noisy input $\zeta$ by a sequence $(\zeta^n)_{n \geq 1}$ of smooth functions. While this point of view makes sense for viscosity solutions (which can be seen as limits of regularized objects), it raises at least two important questions: 
\begin{enumerate}[wide, labelwidth=!, labelindent=0pt, label=\textnormal{(\alph*)}]
\setlength\itemsep{.1in}
	\item 
How should we precisely interpret the limiting rough PDE? 
\item
How can we claim that the solution (provided it exists) is really unique? 
\end{enumerate}
At a methodological level, our paper fully answers these questions. To this aim, we resort to the aforementioned strategy, completely different from~\cite{diehl,allan-cohen}, and based on a rigorous definition of rough test functions and rough viscosity solutions. 
This point of view is also adopted in the overview article~\cite{souganidis-course-cetraro}, where a full theory of rough viscosity solutions is developed. See also 
~\cite{buckdahn} for the special case of an equation driven by a 1-d Brownian motion, as well as~\cite{bhauryal} in an anticipative stochastic calculus framework. Our own rough viscosity setting is valuable in its own right, and applicable to settings like~\cite{buckdahn,bhauryal}. Basically, our notion
%Note that it is unclear at the outset that controlled rough differential equations of the form~\eqref{eq:rde-1} have a (unique) solution. Diehl et al~\cite[Theorem 1]{diehl} show that controlled rough differential equations with `degenerate' controls (i.e., where the admissible controls consist of functions $\mu : [0,T] \to U$) do indeed possess a unique solution. Furthermore, they also show that the optimal value function corresponding to a problem of the form of~\eqref{eq:cont-1}  satisfies an HJB-type equation. This latter result was extended to settings where the control also enters the rough integral term in~\cite{allan-cohen}. Our main results in this paper argue the existence and uniqueness of solutions to `relaxedly controlled' rough differential equations of the form~\eqref{eq:rde-1}, and also proves the existence and uniqueness of a solution to the rough HJB-type equation satisfied by the optimal value function. Indeed, the rough HJB-type equation that we derive is a first order analogue of the pathwise stochastic PDE  in~\cite{lions-souganidis}, albeit driven by the rough path~$\zeta$ in~\eqref{eq:rde-1}. We show that this rough HJB-type equation has a unique rough viscosity solution, where we define a rough viscosity solution using a natural set 
of test functions is similar to the class of smooth functions, albeit perturbed by a noisy term involving the rough driving signal $\zeta$ in~\eqref{eq:rde-1}. This notion of a test function generalizes those in the case of relaxed deterministic optimal control, where the class of test functions are precisely smooth functions. Further, we adopt a new approach to proving the uniqueness of the viscosity solution by using rough flows (analogous to the stochastic flows invoked in~\cite{kunita}, and developed e.g in~\cite{bailleul-19}). 

The rest of this paper is organized as follows. In Section~\ref{sec:rough} we briefly recall some rough path notions that will be used throughout. Our contributions start in Section~\ref{sec:rded} where we first consider a `generic' rough differential equation with drift and prove existence of solutions under minimal H\"older reqularity on the drift coefficient. This result extends existing results, since we allow for time dependence of the drift and diffusion coefficients in the equation. Note that these minimal regularity conditions are necessary when we consider equations involving a control, which we do in Section~\ref{sec:appli-pathwise-control} of the paper. Next, we derive the HJB-type rough-PDE equation satisfied by the value function of the relaxed optimal control problem, using the dynamic programming principle, in Section~\ref{sec:hjb}. After showing that the value function is not necessarily a smooth (or classical) solution of the rough-PDE in Sec~\ref{sec:dpp}, we present our results on the characterization, existence and uniqueness of viscosity solution in Section~\ref{sec:visc}. %We conclude in Section~\ref{sec:concl} with summarizing remarks and connections to future work.

\begin{notation}
In the sequel $k$-th order simplexes on $[0,T]$ will be denoted by $\cs_{k}$. Namely 
$\cs_k([0,T]) = \{ (t_1, \ldots, t_k) \in [0,T]^k; t_1 \leq \cdots \leq t_k \}$. The norm of a vector $f$ in a vector space $V$ is written as $\cn [f; V]$. All constants $c$ can vary from line to line. Function spaces with space-time regularity are spelled as $\cac^{a,b}$.
\end{notation}

\section{Rough Path Notions}\label{sec:rough}
The following is a short account of the rough path notions used in this article, mostly taken from \cite{gubinelli}. We review the notion of controlled process as well as their integrals with respect to a rough path. %We shall also give a version of an It\^o-Stratonovich change of variable formula under reduced regularity condition.

\subsection{Increments}\label{sec:increm}
For a vector space $V$ and an integer $k \geq 1$, let $\cac_k(V)$ be the set of functions $g:\mathcal{S}_{k}([0,T]) \to V$ such that $g_{t_1 \cdots t_k} = 0$ whenever $t_i = t_{i+1}$ for some $i \leq {k-1}$. Such a function will be called a $(k-1)$-increment, and we set $\cac_{*}(V) = \cup_{k \geq 1} \cac_{k}(V)$. Then the operator $\der : \cac_{k}(V) \to \cac_{k+1}(V)$ is defined as follows
\begin{equation}\label{eq:def-delta}
{\der g}_{t_1 \cdots t_{k+1}} = \sum_{i = 1}^{k+1} (-1)^{k-i} g_{t_1\cdots \hat{t_i} \cdots t_{k+1}} \, ,
\end{equation} 
where $\hat{t_{i}}$ means that this particular argument is omitted.
In particular, note that for $f \in \cac_1(V)$ and $h \in \cac_2(V)$ we have
\beq\label{eq:der}
\der f_{st} = f_t - f_s, \quad \text{and} \quad \der h_{sut} = h_{st} - h_{su} -h_{ut}. 
\eeq
It is easily verified that $\der \der = 0$ when considered as an operator from $\cac_{k}(V)$ to $\cac_{k+2}(V)$.

\noindent
The size of these $k$-increments are measured by H\"older norms defined in the following manner: for $f \in \cac_{2}(V)$ and $\mu > 0$ let
\begin{equation}\label{eq:norm_1}
\|f\|_{\mu} = \sup_{(s,t) \in \mathcal{S}_2([0,T])} \dfrac{|f_{st}|}{{|t-s|}^{\mu}},
\quad\text{  and  }\quad
\cac_2^{\mu}(V) = \lbrace f \in \cac_2(V); {\|f\|}_{\mu} < \infty \rbrace \, .
\end{equation}
The usual H\"older space $\cac_1^{\mu}(V)$ will then be determined in the following way: for a continuous function $g \in \cac_{1}(V)$, we simply set
\begin{equation*}
{\|g\|}_{\mu} = {\|\der g\|}_{\mu} \, ,
\end{equation*}
and we will say that $g \in \cac_1^{\mu}(V)$ iff ${\|g\|}_{\mu}$ is finite. 
\begin{remark} 
	Notice that ${\|\cdot\|}_{\mu}$ is only a semi-norm on $\cac_1(V)$, but we will generally work on spaces for which the initial value of the function is fixed.
\end{remark}

We shall also need to measure the regularity of increments in $\cac_3(V)$. To this aim, similarly to \eqref{eq:norm_1}, we introduce the following norm for $h \in \cac_3(V)$:
\begin{equation}\label{eq:norm_2}
{\|h\|}_{\mu} = \sup_{(s,u,t) \in \mathcal{S}_3([0,T])} \dfrac{|h_{sut}|}{{|t-s|}^{\mu}}.
\end{equation}
Then the $\mu$-H\"older continuous increments in $\cac_3(V)$ are defined as:
\begin{equation*}
\cac_3^{\mu}(V) := \lbrace h \in \cac_{3}(V); {\|h\|}_{\mu} < \infty \rbrace .
\end{equation*}
{Notice that the ratio in \eqref{eq:norm_2} could have been written as $\frac{|h_{sut}|}{|t-u|^{\mu_{1}} |u-s|^{\mu_{2}}}$ with $\mu_{1}+\mu_{2}=\mu$, in order to stress the dependence on $u$ of our increment $h$. However, expression \eqref{eq:norm_2} is simpler and captures the regularities we need, since we are working on the simplex $\cs_{3}$.}

The building block of the {rough path} theory is the so-called sewing map lemma. We recall this fundamental result here.\begin{proposition}\label{prop:La}
	Let $h \in \cac_3^{\mu}(V)$ for $\mu > 1$ be such that $\der h = 0$. Then there exists a unique $g = \Lambda(h) \in \cac_2^{\mu}(V)$ such that $\der g = h$. Furthermore for such an $h$, the following relations hold true:
	\begin{equation*}
	\der \Lambda(h) = h ,
	\quad\text{  and  }\quad
{\|\Lambda h\|}_{\mu} \leq \dfrac{1}{2^{\mu} - 2} {\|h\|}_{\mu} .
	\end{equation*}
\end{proposition}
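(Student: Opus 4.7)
The plan is to treat uniqueness and existence separately, with the Hölder bound on $\Lambda h$ dropping out of the explicit construction.

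\textbf{Uniqueness.} Suppose $g_1, g_2 \in \cac_2^\mu(V)$ both satisfy $\der g_i = h$. Then $k := g_1 - g_2$ lies in $\cac_2^\mu(V)$ with $\der k = 0$. The cocycle identity $k_{st} = k_{su} + k_{ut}$ forces $k_{st} = F_t - F_s$ for some $F : [0,T] \to V$, by a standard telescoping argument. The bound $|F_t - F_s| \leq \|k\|_\mu |t-s|^\mu$ with $\mu > 1$ then implies that the difference quotients of $F$ vanish in the limit, so $F$ is constant and $k \equiv 0$.

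\textbf{Existence via dyadic midpoint refinement.} For each pair $(s,t) \in \cs_2([0,T])$ set $g^0_{st} := 0$ and define recursively
\begin{equation*}
g^{n+1}_{st} := g^n_{s, m_{st}} + g^n_{m_{st}, t} + h_{s, m_{st}, t}, \qquad m_{st} := (s+t)/2.
\end{equation*}
Unrolling the recursion, $g^n_{st}$ becomes a sum of $h$-values over the complete binary tree of dyadic subdivisions of $[s,t]$ down to depth $n$. At level $k$ there are $2^k$ dyadic sub-intervals of length $(t-s)/2^k$, each contributing a triple bounded by $\|h\|_\mu((t-s)/2^k)^\mu$, so that $|g^{n+1}_{st} - g^n_{st}| \leq \|h\|_\mu (t-s)^\mu \cdot 2^{n(1-\mu)}$. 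Because $\mu > 1$ the partial sums form a Cauchy sequence, the limit $g_{st} := \lim_n g^n_{st}$ exists, and the geometric tail yields an estimate of the form $|g_{st}| \leq C_\mu \|h\|_\mu |t-s|^\mu$, with the precise constant $\frac{1}{2^\mu - 2}$ coming out once the summation is properly organized relative to the cocycle normalization of $h$.

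\textbf{Closing out and the main obstacle.} Passing to the limit in the defining recursion gives $g_{st} = g_{s,m_{st}} + g_{m_{st}, t} + h_{s, m_{st}, t}$, i.e.\ $\der g_{s, m_{st}, t} = h_{s, m_{st}, t}$ at every midpoint triple. The harder step is upgrading this to $\der g_{sut} = h_{sut}$ for arbitrary $u \in (s,t)$: I approximate $u$ by dyadic rationals $u_n \to u$ and use (i) the $\mu$-Hölder continuity of $g$ already established to pass the $g$-terms to the limit, and (ii) the Chen cocycle condition $\der h = 0$, which allows one to realign $h_{sut_n}$ across different dyadic decompositions without error. This propagation step is the main obstacle: it is where both hypotheses are genuinely used, the exponent $\mu > 1$ to control approximation errors and the identity $\der h = 0$ to make the algebra close. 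A minor combinatorial nuisance is pinning down the exact constant $\frac{1}{2^\mu - 2}$ rather than a slightly larger geometric sum, but this is a matter of accounting rather than of substance.
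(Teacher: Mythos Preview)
The paper does not prove this proposition; it is simply recalled as the standard sewing lemma, with the implicit reference being Gubinelli's original paper. Your dyadic midpoint construction is exactly the classical argument, so you are reproducing the literature proof rather than departing from anything the authors do.

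Two remarks on the execution. First, the constant. Under the paper's norm $\|h\|_\mu=\sup_{s,u,t}|h_{sut}|/|t-s|^\mu$, the bound at the midpoint is only $|h_{s,m,t}|\le\|h\|_\mu|t-s|^\mu$, so your geometric series gives
\[
\sum_{k\ge0}2^{k(1-\mu)}=\frac{2^\mu}{2^\mu-2},
\]
not $\tfrac{1}{2^\mu-2}$. The sharper constant is what falls out under a product norm $\sup|h_{sut}|/(|u-s|^{\mu_1}|t-u|^{\mu_2})$, which the paper explicitly declines to adopt (see the remark right after the definition of $\|\cdot\|_\mu$ on $\cac_3$). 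So this discrepancy is not mere accounting on your part; it is a mismatch between the stated constant and the chosen norm. It is, however, immaterial for every use of the lemma in the paper.

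Second, your closing-out step can be made less delicate. Rather than approximating a general $u$ by dyadic points, first use $\der h=0$ to pick $B\in\cac_2$ with $\der B=h$ explicitly, e.g.\ $B_{st}=-h_{0st}$. Your $g^n$ then coincides with $B-M^nB$, where $M$ is the midpoint operator, and $\Lambda h=g=B-\lim_n M^nB$. The limit $\tilde B:=\lim_n M^nB$ is midpoint-additive by construction and continuous (since $B$ and $g$ are), hence fully additive by iteration and density; thus $\der\tilde B=0$ and $\der(\Lambda h)=\der B=h$. This route makes clear exactly where continuity of $h$ enters and avoids the need to realign $h$-values across non-dyadic triples.
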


%\subsection{Elementary computations in {$\mathcal{C}_{2}$ and $\mathcal{C}_{3}$}}
For our considerations below, we shall need some basic rules about products of increments. Towards this aim, let us specialize our setting to the state space $V=\R$. We will also write $\mathcal{C}_{k}^{\mu}$ for $\mathcal{C}_{k}^{\mu}(\R)$. Then $\left(\mathcal{C}_{\ast}, \der\right)$ can be endowed with the following product: for $g \in \mathcal{C}_n$ and $h \in \mathcal{C}_m$ we let $gh$ be the element of $\mathcal{C}_{m+n-1}$ defined by
\begin{equation*}
(gh)_{t_1, \ldots, t_{m+n-1}} = g_{t_1, \cdots, t_n}h_{t_n,\cdots t_{m+n-1}},
\quad\text{  for  }\quad
(t_1, \ldots, t_{m+n-1}) \in \mathcal{S}_{m+n-1}([0,T]).
\end{equation*}
We now label a rule for discrete differentiation of products, which will be used throughout the article. Its proof is an elementary application of the definition \eqref{eq:def-delta}, and is omitted for sake of conciseness.
\begin{proposition}\label{prop:der_rules}
	The following rule holds true:
	Let $g \in \cac_1$ and $h \in \cac_2$. Then $gh \in \cac_2$ and
	\begin{equation*}
	\der(gh) = -\der g\, h + g\, \der h .
	\end{equation*}
\end{proposition}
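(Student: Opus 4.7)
The claim is a purely algebraic identity on the simplex $\cs_3([0,T])$, so my plan is to reduce everything to the explicit coordinate expressions provided by \eqref{eq:der} and the product convention stated just before the proposition, and then match both sides term by term.

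First I would verify the membership claim $gh \in \cac_2$. Since $g \in \cac_1$, $h \in \cac_2$, the product rule gives $(gh)_{st} = g_s\, h_{st}$ on $\cs_2([0,T])$, and this vanishes whenever $s=t$ because $h$ does; so $gh$ is genuinely a one-increment in $\cac_2$. With this expression at hand, I would apply the second formula in \eqref{eq:der} to $p := gh \in \cac_2$, obtaining
\[
  \der(gh)_{sut}
  \;=\; (gh)_{st} - (gh)_{su} - (gh)_{ut}
  \;=\; g_s\, h_{st} - g_s\, h_{su} - g_u\, h_{ut}.
\]

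Next I would expand the right-hand side $-\der g\,h + g\,\der h$ using the same product convention (now with $n=2$, $m=2$ for $\der g \cdot h$, and $n=1$, $m=3$ for $g\cdot \der h$). Concretely, for $(s,u,t)\in\cs_3([0,T])$,
\[
  (\der g \cdot h)_{sut} = (\der g)_{su}\, h_{ut} = (g_u-g_s)\, h_{ut},
  \qquad
  (g\cdot \der h)_{sut} = g_s\,(\der h)_{sut} = g_s\bigl(h_{st}-h_{su}-h_{ut}\bigr),
\]
where I used \eqref{eq:der} for both $\der g$ and $\der h$. Combining these,
\[
  \bigl(-\der g\,h + g\,\der h\bigr)_{sut}
  = -(g_u-g_s)\,h_{ut} + g_s\bigl(h_{st}-h_{su}-h_{ut}\bigr)
  = g_s\,h_{st} - g_s\,h_{su} - g_u\,h_{ut},
\]
after the $g_s h_{ut}$ terms cancel. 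This matches the expression for $\der(gh)_{sut}$ above, so the identity holds pointwise on $\cs_3([0,T])$.

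There is no real obstacle here beyond careful bookkeeping of signs and indices; the only thing worth double-checking is the sign convention used in \eqref{eq:def-delta} for $\der$ on $\cac_2$, which is fixed by the explicit formula $\der h_{sut}=h_{st}-h_{su}-h_{ut}$ in \eqref{eq:der}. Once this convention is pinned down, the proof reduces to the two-line expansion above.
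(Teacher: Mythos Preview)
Your proof is correct and is exactly the elementary computation the paper has in mind; indeed, the paper omits the proof entirely, stating only that it is ``an elementary application of the definition \eqref{eq:def-delta}.'' Your expansion of both sides on $\cs_3([0,T])$ via \eqref{eq:der} and the product convention is the natural way to carry this out.
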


\noindent
The iterated integrals of smooth functions on $[0,T]$ are particular cases of elements of $\cac_2$, which will be of interest. Specifically, for smooth real-valued functions $f$ and $g$, let us denote $\int f dg$ by $\mathcal{I}(f dg)$ and consider it as an element of $\cac_2$: for $(s,t) \in \mathcal{S}_{2}\left([0,T]\right)$ we set
\begin{equation*}
\mathcal{I}_{st} (f dg) = \left(\int f dg\right)_{st} = \int_{s}^{t} f_u dg_u .
\end{equation*}

\subsection{Weakly controlled processes}\label{sec:weak-contr}

From now on we will focus on a space $V$ of the form $V = \R^d$. The main assumption on the $\R^d$-valued noise $\bzeta$ of equation~\eqref{eq:de} is that it gives rise to a geometric rough path. This assumption can be summarized as follows.
\begin{hypothesis}\label{hyp:zeta}
	The path $\zeta:[0,T] \to \R^d$ belongs to the H\"older space ${\cac}^{\al}([0,T];\R^d)$ with $\al > \frac{1}{3}$ and $\zeta_0 = 0$. In addition $\zeta$ admits a L\'evy area above itself, that is, there exists a two index map $\mathbf{\zeta}^2 : {\mathcal{S}_{2}\left([0,T]\right)} \to \R^{d,d}$ which belongs to $\cac_{2}^{2\al}(\R^{d,d})$ and such that 
	\beq\label{eq:zeta^2}
	\der {\mathbf{\zeta}}_{sut}^{2;ij} = {\der \zeta}_{su}^{i} \otimes {\der \zeta}_{ut}^{j},
	\quad\text{  and  }\quad
	{\mathbf{\zeta}_{st}^{2;ij} + \mathbf{\zeta}_{st}^{2;ji}} =  \der \zeta_{st}^{i} \otimes \der \zeta_{st}^{j} .
	\eeq
	The $\al$-H\"older norm of $\zeta$ is denoted by:
	\beq\label{eq:zeta-norm}
	\|\mathbf{\zeta}\|_{\al} = \cn(\zeta;\cac_1^{\al}([0,T],\R^d))
	+\cn(\mathbf{\zeta}^2;\cac_2^{2\al}([0,T],\R^{d,d})).
	\eeq
\end{hypothesis}

\begin{remark}
In this article we restrict our analysis to $\al$-H\"older continuous signals with $\al>1/3$. This assumption is imposed in order to limit Taylor type expansions to reasonable lengths. However, generalizations to lower regularities can be achieved thanks to lengthy additional computations.
\end{remark}

\begin{remark}
As mentioned in the introduction, a typical example of path $\zeta$ satisfying Hypothesis~\ref{hyp:zeta} are generic sample trajectories of a fractional Brownian motion $B=(B^{1,H},\ldots,$ $B^{d,H})$ defined on a complete probability space $(\Omega,\cf,\bp)$. In order to ensure that the signal is $\al$-H\"older continuous with $\al>1/3$, we need to restrict the Hurst parameter to be such that $H>1/3$ (see e.g \cite[Chapter 15]{friz-victoir}). All the results stated below apply to this context.
\end{remark}

\noindent
We now define the notion of a weakly controlled process.
\begin{definition}\label{def:weakly-ctrld}
	Let $z$ be a process in $\cac_{1}^{\ka}(\R^n)$ with $\ka \leq \al$ and $2\ka +\ga > 1$. We say that $z$ is weakly controlled by $\zeta$ if $\der z \in \cac_2^{\ka}(\R^n)$ can be decomposed into
	\begin{equation}\label{eq:weakly-controlled}
	\der z^{i} = z^{\zeta;i i_1} \der \zeta^{i_1} + \rho^{i}, 
	\quad\text{   i.e.  }\quad
{\der z}_{st}^{i} = z_{s}^{\zeta;i i_1}{\der \zeta}_{st}^{i_1} + \rho_{st}^{i} ,
	\end{equation}
	for all $(s,t) \in \mathcal{S}_2 \left([0,T]\right)$ and $i=1, \ldots,n$. In the previous formula we assume $z^{\zeta} \in \cac_{1}^{\ka}(\R^{n,d})$ and $\rho$ is a more regular remainder such that $\rho \in \cac_{2}^{2\ka}(\R^{n})$. The space of weakly controlled paths on $[a,b]$ with H\"older continuity $\ka$ and driven by $\zeta$ will be denoted by $\mathcal{Q}_{\zeta}^{\ka}(\R^n)$ and a process $z \in \mathcal{Q}_{\zeta}^{\ka}(\R^n)$ can be considered as a couple $(z, z^\zeta)$. The natural semi-norm on $\mathcal{Q}_{\zeta}^{\ka}(\R^n)$ is given by
	\beq\label{eq:norm-controlled}
	\cn[z;\mathcal{Q}_{\zeta}^{\ka}(\R^n)] = \cn[z; \cac_1^{\ka}(\R^n)] + \cn[z^\zeta; \cac_1^{\infty}(\R^{n,d})] + \cn[z^\zeta; \cac_1^{\ka}(\R^{n,d})] + \cn[\rho; \cac_2^{\ka}(\R^n)].
	\eeq
\end{definition}

\begin{remark}\label{rem:j-e}
In the sequel we will also need to introduce spaces of the form $\cq_{\zeta}^{\ka}({[0,T]}; V)$, where $V$ is an infinite dimensional functional space. As an example, our notion of rough viscosity solution in Section~\ref{sec:hjb} will require $V = \cac^1(\R^m)$.
\end{remark}

In the following proposition we give an elaboration of the classical composition rule in~\cite{gubinelli}, for controlled processes taking into account time dependent functions. 
 
\begin{proposition}\label{prop:smooth_of_weak}
Let $z$ be a controlled process in $\cq_{\zeta}^{\ka}([a,b];\R^n)$, with decomposition \eqref{eq:weakly-controlled} and initial condition $z_0$. Let also $L$ be a function in $\cac_b^{\tau, 2} ([a,b] \times \R^n;\R^{m})$ for $\tau \geq 2\ka$. Set $\hat{z}_t = L(t, z_t)$, with initial condition $\hat{\al} = L(a, z_0)$. Then the increments of $\hat{z}$ can be decomposed into
$$
\der \hat{z}_{st} = \hat{z}_s^{\bzeta} \, \der \zeta_{st} + \hat{\rho}_{st} + (\der L(\cdot, z))_{st},
$$
with the convention that $\nabla_z L(s, z)\in\R^{m,n}$, and $\hat{z}^{\zeta}, \hat{\rho}$ defined by
\beq\label{eq:6-terms}
\hat{z}_s^{\zeta} = \nabla_z L(s, z_s) z_s^{\zeta} \quad \text{and} \quad \hat{\rho}_{st}=\nabla_z L(s,z_s) \rho_{st} +      \lc  {\lp \der L(\cdot ,z) \rp}_{st} -\nabla_z L(s,z_s) \der z_{st} \rc . 
\eeq
Furthermore $\hat{z} \in \cq_{\zeta}^{\ka} ([a,b]; \R^m)$, and
\beq\label{eq:6-a}
\cn \lc \hat{z}; \cq_{\zeta}^{\ka}([a,b];\R^m) \rc \leq c_L \lp 1 + \cn^2\lc z; \cq_{\zeta}^{\ka} ([a,b]; \R^n) \rc \rp.
\eeq
\end{proposition}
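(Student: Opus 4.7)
The proof adapts the Gubinelli composition rule for controlled paths to the time-dependent setting, replacing the first-order Taylor expansion of $L$ in the space variable by a joint space-time expansion in which the time dependence is treated as an additional $\tau$-H\"older perturbation. The plan is to identify the three pieces of the decomposition algebraically, then check separately that $\hat\rho\in\cac_2^{2\ka}$ and that $\hat z^\zeta\in\cac_1^{\infty}\cap\cac_1^\ka$, and finally collect the estimates into (\ref{eq:6-a}).

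Stage 1 (identification). Starting from $\der\hat z_{st}=L(t,z_t)-L(s,z_s)$, add and subtract $\nabla_z L(s,z_s)\,\der z_{st}$ to get
$$\der\hat z_{st} = \nabla_z L(s,z_s)\,\der z_{st} + \bigl[(\der L(\cdot,z))_{st}-\nabla_z L(s,z_s)\,\der z_{st}\bigr].$$
Substituting $\der z_{st} = z_s^\zeta\,\der\zeta_{st}+\rho_{st}$ in the first term produces exactly $\hat z_s^\zeta\,\der\zeta_{st}$ with $\hat z_s^\zeta=\nabla_z L(s,z_s)z_s^\zeta$, together with the contribution $\nabla_z L(s,z_s)\rho_{st}$ that appears in (\ref{eq:6-terms}); the bracketed piece is precisely what must carry the time increment and the second-order space remainder.

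Stage 2 (remainder estimate). The core bound is that the bracket above belongs to $\cac_2^{2\ka}$. Split $L(t,z_t)-L(s,z_s) = [L(t,z_t)-L(s,z_t)] + [L(s,z_t)-L(s,z_s)]$. To the second bracket apply Taylor in the space variable, giving $L(s,z_t)-L(s,z_s) = \nabla_z L(s,z_s)(z_t-z_s) + R_2$ with $|R_2|\le \tfrac12\|\nabla_z^2 L\|_\infty\|z\|_\ka^2(t-s)^{2\ka}$. The pure time increment is handled by the regularity $L\in\cac_b^{\tau,2}$ together with $\tau\ge 2\ka$:
$$|L(t,z_t)-L(s,z_t)|\le [L]_{\tau,0}(t-s)^\tau \le [L]_{\tau,0}\,T^{\tau-2\ka}(t-s)^{2\ka}.$$
Combining these with $|\nabla_z L(s,z_s)\rho_{st}|\le\|\nabla_z L\|_\infty\|\rho\|_{2\ka}(t-s)^{2\ka}$ yields the desired $2\ka$-H\"older bound on $\hat\rho_{st}$. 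For $\hat z^\zeta = \nabla_z L(\cdot,z)\,z^\zeta$, the $\cac_1^\infty$ bound is immediate from $\|\nabla_z L\|_\infty$ and $\|z^\zeta\|_\infty$, while the $\ka$-H\"older bound follows from the product rule applied to
$$\nabla_z L(t,z_t)-\nabla_z L(s,z_s) = [\nabla_z L(t,z_t)-\nabla_z L(s,z_t)] + [\nabla_z L(s,z_t)-\nabla_z L(s,z_s)],$$
where the first bracket is $O((t-s)^\tau)$ by time regularity of $\nabla_z L$ and the second is $O((t-s)^\ka)$ by its Lipschitz property in space, combined with the $\ka$-H\"older control on $z^\zeta$.

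Stage 3 (quadratic norm bound). Assemble the four terms appearing in (\ref{eq:norm-controlled}), inserting the estimates of Stage 2. Each piece is at most linear in $\cn[z;\cq_\zeta^\ka]$ except for the Taylor remainder, which contributes a term proportional to $\|z\|_\ka^2$; this is precisely the source of the quadratic dependence in (\ref{eq:6-a}). The constant $c_L$ absorbs $\|\nabla_z L\|_\infty$, $\|\nabla_z^2 L\|_\infty$, $[L]_{\tau,0}$, and a factor $T^{\tau-2\ka}\vee 1$. The main technical obstacle is exactly this book-keeping: one must ensure that the $\tau$-H\"older time increment of $L$ and the quadratic second-order remainder both fit into the $2\ka$-H\"older space where $\hat\rho$ must live, and the condition $\tau\ge 2\ka$ is the minimal requirement that makes this compatibility possible.
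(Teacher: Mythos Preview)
Your proof is correct and follows essentially the same route as the paper's: the time--space split $L(t,z_t)-L(s,z_s)=[L(t,z_t)-L(s,z_t)]+[L(s,z_t)-L(s,z_s)]$, a second-order Taylor expansion in the space variable producing the quadratic term $\|z\|_\ka^2$, and the observation that $\tau\ge 2\ka$ is exactly what lets the time increment sit in $\cac_2^{2\ka}$. The paper's own proof is even more terse---it only writes out the bound for $\cn[\hat z;\cac_1^\ka]$ and defers the remaining three pieces of the norm to \cite[Proposition~4]{gubinelli}---so your Stage~2 is in fact more complete than what appears there. One small caveat: in bounding $\cn[\hat z^\zeta;\cac_1^\ka]$ you invoke $\tau$-H\"older regularity of $\nabla_z L$ in time, which is not explicitly stated as part of $\cac_b^{\tau,2}$; make sure your reading of that space includes it (as is standard, and as the paper tacitly assumes elsewhere).
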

\begin{proof}
We give a brief proof for sake of completeness. Recall that $\cn[\hat{z}; \cq_{\bzeta}^{\ka}]$ is given by
\beq\label{eq:6-0}
\cn[\hat{z}; \cq_{\bzeta}^{\ka}] = \cn[\hat{z}; \cac_1^{\ka}] + \cn[\hat{z}^{\bzeta}; \cac_1^{\infty}] + \cn[\hat{z}^{\bzeta}; \cac_1^{\ka}] + \cn[\hat{\rho}; \cac_2^{2\ka}].
\eeq
In the sequel we will just show how to bound the term $\cn[\hat{z}; \cac_1^{\ka}]$, the other terms being handed similarly to \cite[Proposition~4]{gubinelli}. 

Recall that $\hat{z}_t = L(t, z_t)$. Hence we have
$$
\lln \der \hat{z}_{st} \rrn \leq \lln L(t, z_t) - L(s, z_t) \rrn + \lln L(s, z_t) - L(s, z_s) \rrn,
$$
from which we easily get 
$$
\lln \der \hat{z}_{st} \rrn \leq {\|L\|}_{\cac_b^{\tau, 2}} {|t-s|}^{\tau} + {\|\nabla_z L\|}_{\infty} \cn [z; \cac_1^{\ka}] {|t-s|}^{\ka}.
$$
Therefore we end up with 
$$
\cn[\hat{z}; \cac_1^{\ka}] \leq {\|L\|}_{\cac_b^{\tau, 2}} T^{\tau - \ka} + {\|\nabla_z L\|}_{\infty} \cn[z; \cac_1^{\ka}].
$$
This bound is consistent with \eqref{eq:6-a}. 
As mentioned above, the rest of the terms in \eqref{eq:6-0} can be bounded similarly to \cite[Proposition~4]{gubinelli} in order to obtain the required relation~\eqref{eq:6-a}. 
\end{proof}

One of the main reasons to introduce weakly controlled paths is that they provide a natural set of functions which can be integrated with respect to a rough path. Below we handle mixed integrals with a rough term and a Lebesgue type term, for which we need an additional notation.
%The class of weakly controlled paths provides a natural and basic set of functions which can be integrated with respect to a rough path. %The basic proposition in this direction, whose proof can be found in \cite{Gubinelli}, is summarized below.
\begin{notation}\label{not:leb-int}
Let $\eta$ be a path in $\cac_b^0([a,b];\R)$, where $\cac_b^0([a,b];\R)$ stands for the space of $\R$-valued continuous functions defined on $[a,b]$. For all $a \leq s \leq t \leq b$ we denote by $\ci_{st}(\eta)$ the Lebesgue integral $\int_s^t \eta_u du$.
\end{notation}
We are now ready to state our integration result. Notice again that our formulation of the integral differs from the standard one, due to the fact that we are paying extra attention to the drift term in our control setting.
\begin{proposition}\label{prop:integral_as_weak}
For a given $\al > \frac{1}{3}$ and $\ka < \al$, let $\zeta$ be a process satisfying Hypothesis~\ref{hyp:zeta}. Furthermore, let $\eta \in \cac_b^0([a,b]; \R)$ as in Notation~\ref{not:leb-int}, and $\mu \in \cq_{\zeta}^{\ka}([a,b]; \R^d)$ as given in Definition~\ref{def:weakly-ctrld} whose increments can be decomposed as
\beq\label{b1}
\der \mu^i = \mu^{\zeta;i i_1} \der \zeta^{i_1} + \rho^{\mu;i}, \quad\text{ where } \mu^{\zeta} \in \cac_1^{\ka}([a,b]; \R^{d,d}), \, \rho^{\mu} \in \cac_2^{2\ka}([a,b]; \R^d).
\eeq
Define $z$ by $z_a = y \in \R$ and 
\beq\label{eq:b}
\der z = \mu^i\der \zeta^i + \mu^{\zeta;i i_1} \mathbf{\zeta}^{2;i_1 i} + \ci(\eta) + \Lambda(\rho^{\mu;i} \der \zeta^i + \der \mu^{\zeta;i i_1} \mathbf{\zeta}^{2; i_1 i}),
\eeq
where the term $\ci(\eta)$ is introduced in Notation~\ref{not:leb-int} and $\laa$ is the sewing map in Proposition~\ref{prop:La}. Then:
\begin{enumerate}[wide, labelwidth=!, labelindent=0pt, label=\textnormal{(\arabic*)}]
\setlength\itemsep{.05in}

\item 
 The path $z$ is well-defined as an element of $\cq_{\zeta}^{\ka}([a,b]; \R)$, and $\der z_{st}$ coincides with the Lebesgue-Stieltjes integral $\int_s^t \mu_u^i d\zeta_u^i + \int_s^t \eta_u du$  whenever $\zeta$ is a differentiable function.

\item The semi-norm of $z$ in $\cq_{\zeta}^{\ka} ([a,b]; \R)$ can be estimated as 
\begin{multline}\label{eq:6-b}
\cn[z; \cq_{\zeta}^{\ka}([a,b]; \R)] \\
\leq c_{\zeta} \lcl 1 + \|\mu_a\| + (b-a)^{\al-\ka} \lp \|\mu_a\| + \cn[\mu; \cq_{\zeta}^{\ka}([a,b]; \R^d)] + \cn[\eta; \cac_b^0([a,b]; \R)] \rp \rcl,
\end{multline}
where the constant $c_{\zeta}$ verifies $c_{\zeta} \leq c(|\zeta|_{\al} + |\mathbf{\zeta}^2|_{2\al})$ for a universal constant $c$.

\item It holds that 
\beq\label{eq:riemann}
\der z_{st} = \lim_{|\Pi_{st}| \to 0} \sum_{q=0}^n \lc \mu_{t_q}^i \der \zeta_{t_q t_{q+1}}^i + \eta_{t_q} (t_{q+1}-t_q) + \mu_{t_q}^{\zeta;i i_1} \mathbf{\zeta}_{t_q, t_{q+1}}^{2; i_1 i} \rc,
\eeq
for any $a \leq s \leq t \leq b$. In \eqref{eq:riemann} the limit is taken over all partitions $\Pi_{st} = \{s=t_0, \ldots, t_n=t\}$ of $[s,t]$, as the mesh of the partition goes to zero. 
\end{enumerate}
\end{proposition}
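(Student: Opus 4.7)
The plan is to realize $z$ as the sum of a Gubinelli-type rough integral $\int\mu\,d\zeta$ and an elementary Lebesgue integral $\int\eta\,du$, and then address the three items in turn. Set
\[
J_{st} := \mu^i_s\,\der\zeta^i_{st} + \mu^{\zeta;ii_1}_s\,\mathbf{\zeta}^{2;i_1 i}_{st},
\]
so that \eqref{eq:b} reads $\der z = J + \ci(\eta) + \Lambda(\rho^{\mu;i}\der\zeta^i + \der\mu^{\zeta;ii_1}\mathbf{\zeta}^{2;i_1 i})$. For item (1), the key step is a direct computation of $\der J_{sut}$: combining the additivity of $\der\zeta$, the Chen relation $\der\mathbf{\zeta}^{2;ij}_{sut} = \der\zeta^i_{su}\otimes\der\zeta^j_{ut}$ from \eqref{eq:zeta^2}, and the controlled-path decomposition \eqref{b1}, one obtains
\[
\der J_{sut} = -\,\rho^{\mu;i}_{su}\,\der\zeta^i_{ut} - \der\mu^{\zeta;ii_1}_{su}\,\mathbf{\zeta}^{2;i_1 i}_{ut}.
\]
Hence the argument of $\Lambda$ in \eqref{eq:b} equals $-\der J$, which lies in $\cac_3^{\mu_0}$ with $\mu_0 := 2\ka+\al > 1$ and is $\der$-closed as a $\der$-image. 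Proposition~\ref{prop:La} then yields $\Lambda(-\der J) \in \cac_2^{\mu_0}$ with $\der\Lambda(-\der J) = -\der J$, and consequently $\der\der z = \der J - \der J + 0 = 0$, so $\der z$ is an additive $2$-increment and $z$ is a well-defined path with $z_a = y$. Reading off \eqref{eq:b} shows that $z$ admits the Gubinelli decomposition with derivative $z^\zeta = \mu$ and remainder $\rho^z_{st} = \mu^{\zeta;ii_1}_s\mathbf{\zeta}^{2;i_1 i}_{st} + \ci_{st}(\eta) + \Lambda(-\der J)_{st} \in \cac_2^{2\ka}$, placing $z$ in $\cq_\zeta^\ka([a,b];\R)$.

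For item (2), each of the four seminorms in \eqref{eq:norm-controlled} is bounded separately. Splitting $\mu_u = \mu_a + \der\mu_{au}$ and $\mu^\zeta_u = \mu^\zeta_a + \der\mu^\zeta_{au}$ isolates the contribution of the initial value $\mu_a$ from the variation parts, which are controlled by $\cn[\mu;\cq_\zeta^\ka]$. Pairing with the $\al$-H\"older norm of $\zeta$ and the $2\al$-H\"older norm of $\mathbf{\zeta}^2$ yields estimates naturally on the $\al$- or $2\al$-scale, and converting to the target $\ka$-scale required by \eqref{eq:6-b} produces the prefactor $(b-a)^{\al-\ka}$. The $\Lambda$-contribution is controlled via the operator bound ${\|\Lambda h\|}_{\mu_0} \leq (2^{\mu_0}-2)^{-1}{\|h\|}_{\mu_0}$ of Proposition~\ref{prop:La}, applied to $h = -\der J$; the Lebesgue term contributes $\cn[\eta;\cac_b^0]\,(b-a)^{1-\ka}$ in the $\ka$-scale, absorbed into the right-hand side of \eqref{eq:6-b}.

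For item (3), the defining characterization of the sewing map as a partition limit gives $\Lambda(\der J)_{st} = \lim_{|\Pi_{st}|\to 0}\bigl[J_{st} - \sum_q J_{t_q t_{q+1}}\bigr]$, so $\sum_q J_{t_q t_{q+1}} \to J_{st} - \Lambda(\der J)_{st} = \der z_{st} - \ci_{st}(\eta)$. Combining with the elementary Riemann-sum convergence $\sum_q \eta_{t_q}(t_{q+1}-t_q) \to \ci_{st}(\eta)$ for continuous $\eta$ produces \eqref{eq:riemann}. Consistency of \eqref{eq:b} with the Lebesgue--Stieltjes integral for smooth $\zeta$ then follows as an immediate corollary, since in that case the sewing correction is a genuine remainder of a classical Riemann sum. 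I expect the main obstacle to be the seminorm bookkeeping in item (2): extracting the precise $\|\mu_a\|$-dependence and the $(b-a)^{\al-\ka}$ prefactor in each of the four pieces of $\cn[z;\cq_\zeta^\ka]$ requires a careful case-by-case splitting of the terms $\mu^i\der\zeta^i$, $\mu^{\zeta;ii_1}\mathbf{\zeta}^{2;i_1 i}$ and $\Lambda(-\der J)$ according to the initial-value / variation decomposition, while the algebraic identity for $\der J$ in item (1) and the sewing-limit characterization in item (3) are comparatively standard tools from \cite{gubinelli}.
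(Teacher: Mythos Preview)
Your proposal is correct and matches the paper's approach. The paper only writes out item~(2) in detail, deferring items~(1) and~(3) to \cite[Theorem~1]{gubinelli}; your sketch of the $\der J$ computation and the sewing-limit characterization fills in exactly what is referenced there, and your plan for the seminorm estimate---bounding each piece of \eqref{eq:norm-controlled}, using the operator bound on $\Lambda$, and extracting $(b-a)^{\al-\ka}$ via the controlled decomposition of $\mu$---is precisely what the paper does in \eqref{eq:prop-6-a}--\eqref{eq:prop-6-c}.
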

\begin{proof}
We briefly give a proof of relation~\eqref{eq:6-b} as the rest is contained in \cite[Theorem 1]{gubinelli}. Recall that 
\beq\label{eq:z-norm}
\cn [z ; \cq_{\bzeta}^\ka] = \cn[z; \cac_1^{\ka}] + \cn[\mu; \cac_1^{\infty}] + \cn[\mu; \cac_1^{\ka}] + \cn[\rho^{\ast}; \cac_2^{2\ka}],
\eeq
where 
\beq\label{eq:rho*}
\rho^{\ast} =  \mu^{\zeta} \mathbf{\zeta}^{2} + \ci(\eta) + \Lambda(\rho \der \zeta + \der \mu^{\zeta} \mathbf{\zeta}^{2}).
\eeq
Let us first bound the term $\rho^{\ast}$ above. By Proposition~\ref{prop:La} we have that
\beq\label{eq:prop-6-a}
\| \Lambda(\rho \der \zeta + \der \mu^{\zeta} \mathbf{\zeta}^{2})\|_{3 \ka} 
\lesssim \|\rho \der \bzeta + \der \mu^{\bzeta} \bzeta^2\|_{3\ka} 
\leq (b-a)^{\al - \ka} \cn[\rho; \cac_2^{2\ka}] \|\bzeta\|_{\al} + (b-a)^{2\al- 2\ka} \cn[\mu; \cq_{\bzeta}^{\ka}]\|\bzeta\|_{\al},
\eeq
while it is readily checked that
\beq\label{eq:prop-6-b}
\cn[\mu^{\bzeta} \bzeta^2 + \ci(\eta); \cac_2^{2\ka}] \leq (b-a)^{2\al - 2\ka} \lp \cn[\mu; \cq_{\bzeta}^{\ka}] \|\bzeta\|_{\al} + \cn[\eta; \cac_b^0] \rp.
\eeq
Plugging \eqref{eq:prop-6-a} and \eqref{eq:prop-6-b} into \eqref{eq:rho*}, we end up with
\begin{eqnarray}\label{eq:rho*-b}
\cn[\rho^{\ast}; \cac_2^{2\ka}] 
&\leq& 
c_3 (b-a)^{\al - \ka} \lp \cn[\mu; \cq_{\bzeta}^{\ka}] + \cn[\rho; \cac_2^{2\ka}] + \cn[\eta; \cac_b^0] \rp \notag\\
&\leq& 
c_3 (b-a)^{\al - \ka} \lp \cn[\mu; \cq_{\bzeta}^{\ka}] + \cn[\eta; \cac_b^0] \rp.
\end{eqnarray}
In order to bound $\cn[\mu; \cac_1^{\ka}]$ in \eqref{eq:z-norm} we could use the trivial bound $\cn[\mu; \cac_1^{\ka}] \leq \cn[\mu; \cq_{\bzeta}^{\ka}]$, which stems directly from our definition~\eqref{eq:norm-controlled}. Since we wish to extract some power of $(b-a)$ in our upper bound, we will follow a different route. Namely we write
\begin{multline}\label{eq:prop-6-c}
\cn[\mu; \cac_1^{\ka}] = \sup_{s,t \in [a,b]} \dfrac{|\mu_s^{\bzeta} \der \bzeta_{st} + \rho_{st}|}{|s-t|^{\ka}}\leq \sup_{s,t \in [a,b]} \lp \dfrac{|\mu_s^{\bzeta}| |\der \bzeta_{st}|}{|s-t|^{\ka}} + \dfrac{|\rho_{st}|}{|s-t|^{\ka}} \rp\\
 \leq \cn[\mu; \cac_1^{\infty}](b-a)^{\al - \ka} \|\bzeta\|_{\al} + (b-a)^{\ka} \cn[\rho;\cac_2^{2\ka}].
\end{multline}
Combining \eqref{eq:rho*-b}-\eqref{eq:prop-6-c}, and similarly obtainable bounds for $\cn[z;\cac_1^{\ka}]$ and $\cn[\mu; \cac_1^{\ka}]$ we obtain the relation \eqref{eq:6-b}.
\end{proof}

\subsection{Strongly controlled processes}

In the sequel, some of our computations will require a finer description than~\eqref{eq:weakly-controlled} for controlled processes. In this section we introduce this kind of decomposition and we will also derive an application to rough integration.
Let us start with the definition of strongly controlled process. 
\begin{definition}\label{def:str-contr}
Similarly to Definition~\ref{def:weakly-ctrld}, consider a process $\nu \in \cac_1^{\ka}(\R^n)$ with $\ka \leq \al$ and $2\ka+\al > 1$. Let us suppose that the increment $\der z$ lies in $\cac_2^{\ka}$ and can be decomposed as 
\beq\label{eq:str-contr-a}
\der \nu^j = \nu^{\zeta;j i_1} \der \zeta^{i_1} + \nu^{\bzeta^{2}; j i_1 i_2} \bzeta^{2; i_2 i_1} + \rho^{\nu;j},
\eeq
where the regularity for the paths $\nu^{\zeta}$, $\nu^{\bzeta^2}$ and $\rho^{\nu}$ are respectively
\beq\label{eq:str-contr-b}
\nu^{\zeta}, \nu^{\bzeta^2} \in \cac_1^{\ka}([a,b]), \quad\text{and}\quad \rho^{\nu} \in \cac_2^{3\ka}([a,b]).
\eeq
In addition, we assume that $\nu^{\zeta}$ is also a weakly controlled process as in Definition~\ref{def:weakly-ctrld}, with decomposition
\beq\label{eq:str-contr-l}
\der \nu^{\zeta; ji_1} = \nu^{\zeta^{2}; j i_1 i_2} \der \zeta^{i_2} + \rho^{\nu^{\zeta};j},
\eeq
where $\nu^{\zeta^{2}; j i_1 i_2}$ is like in \eqref{eq:str-contr-a} and $\rho^{\nu^{\zeta}; j}$ is a remainder in $\cac_2^{2\ka}([a,b])$. Then we say that $\nu$ is a strongly controlled process in $[a,b]$. We call $\tilde{\cq}_{\zeta}^{\ka}(\R^n)$ this set of paths, equipped with the semi-norm
$$
\cn[\nu; \tilde{\cq}_{\zeta}^{\ka}(\R^n)] = \cn[\nu;\cac_1^{\ka}] + \cn[\nu^{\zeta};\cac_1^{\infty}] + \cn[\nu^{\zeta^{2}}; \cq_{\zeta}^{\ka}] + \cn[\rho;\cac_2^{3\ka}].
$$
\end{definition}

On our way to the definition of viscosity solutions, we will encounter integrals of controlled processes with respect to other controlled processes. We recall how to define this kind of integral in the proposition below, which is a slight elaboration of \cite[Theorem~1]{gubinelli} and requires the introduction of strongly controlled processes. 

\begin{proposition}\label{prop:integral_controlled}
For a given $\al > \frac{1}{3}$ and $\ka < \al$, let $\zeta$ be a process satisfying Hypothesis~\ref{hyp:zeta}. 
Given $m,n \geq 1$, we consider a weakly controlled process $\mu \in \cq_{\bzeta}^{\ka}([a,b]; \R^m)$ as introduced in Definition~\ref{def:weakly-ctrld} and a strongly controlled process $\nu \in \tilde{\cq}_{\zeta}^{\ka}([a,b]; \R^n)$ as given in Definition~\ref{def:str-contr}. Specifically, for every $i=1, \ldots , m$ and $j=1, \ldots, n$ we assume the decomposition
\begin{align}
\der \mu^i &= \mu^{\zeta;i i_1} \der \zeta^{i_1} + \rho^{\mu; i}, \label{eq:int_contr-g} \\ %\text{ where } \mu^{\zeta} \in \cac_1^{\ka}([a,b]; \R^{m,d}), \, \rho^{\mu} \in \cac_2^{2\ka}([a,b]; \R^m),\\
\der \nu^j &= \nu^{\zeta;j i_1} \der \zeta^{i_1} + \nu^{\bzeta^2; j i_1 i_2}\bzeta^{2; i_2 i_1} +  \rho^{\nu;j}, \label{eq:int_contr-h}%\text{ where } \nu^{\zeta} \in \cac_1^{\ka}([a,b]; \R^{n,d}), \, \rho^{\nu} \in \cac_2^{2\ka}([a,b]; \R^n).
\end{align}
where the regularities for the paths $\mu^{\bzeta}, \mu^{\bzeta^2}, \rho^{\mu}, \nu^{\bzeta}, \nu^{\bzeta^2}, \rho^{\nu}$ are respectively
\begin{equation*}
\mu^{\bzeta; i i_1}, \nu^{\bzeta; j i_1}, \nu^{\bzeta^2; j i_1 i_2} \in \cac_1^{\ka}([a,b]), \qquad
\rho^{\mu; i} \in \cac_2^{2 \ka} ([a,b]), \qquad
\rho^{\nu; j} \in \cac_2^{3 \ka} ([a,b]).
\end{equation*}
Furthermore, let $\eta \in \cac_1^0([a,b]; \R^{m, n})$. For $1 \leq i \leq m$ and $1\leq j \leq n$ we define a controlled process $z^{ij}$ by setting $z_a^{ij} = y^{ij} \in \R$ and a decomposition
\beq\label{eq:int_contr-e}
\der z_{st}^{ij} = \cg_{st}^{ij} + \ci_{st}(\eta^{ij}) + \laa_{st}\lp \cj^{ij} \rp,
\eeq
where the term $\ci(\eta^{ij})$ is introduced in Notation~\ref{not:leb-int}, where the increment $\cg^{ij}$ is defined by 
\beq\label{eq:int_contr-i}
\cg_{st}^{ij} = \mu_s^i \nu_s^{\bzeta;j i_1} \der \bzeta_{st}^{i_1} + \lp \mu_s^i \nu_s^{\bzeta^2; j i_1 i_2} + \mu_s^{\bzeta; i i_2} \nu_s^{\bzeta; j i_1} \rp \bzeta_{st}^{2; i_2 i_1},
\eeq
and where $\cj^{i j}$ is an increment in $\cac_3^{3\ka}$ satisfying
\beq\label{eq:int_contr-f}
\cj_{sut}^{i j} = - \der \cg_{sut}^{ij}, 
\quad \text{and} \quad
\cn[\cj^{ij}; \cac_3^{3\ka}([a,b])] \leq c_{\bzeta} \cn[\mu^i; \cq_{\zeta}^{\ka}([a,b])] \cn[\nu^j; \tilde{\cq}_{\zeta}^{\ka}([a,b])].
\eeq
%where we recall the notation $\der$ in \eqref{eq:der}.
Then similarly to Proposition~\ref{prop:integral_as_weak}, the following assertions hold true:
\begin{enumerate}[wide, labelwidth=!, labelindent=0pt, label=\textnormal{(\arabic*)}]
\setlength\itemsep{.02in}

\item 
The path $z$ is well-defined as an element of $\cq_{\zeta}^{\ka}([a,b]; \R^{m,n})$, and $\der z_{st}$ coincides with the Lebesgue-Stieltjes integral $\int_s^t \mu_u \otimes d\nu_u + \int_s^t \eta_u du$  whenever $\zeta$ is a differentiable function.

%\item The semi-norm of $z$ in $\cq_{\zeta}^{\ka} ([a,b]; \R)$ can be estimated as 
%\begin{multline}\label{eq:6-b}
%\cn[z; \cq_{\zeta}^{\ka}([a,b]; \R)] \\
%\leq c_{\zeta} \lcl 1 + \|\mu_a\| + (b-a)^{\al-\ka} \lp \|\mu_a\| + \cn[\mu; \cq_{\zeta}^{\ka}([a,b]; \R^d)] + \cn[\eta; \cac_1^0([a,b]; \R)] \rp \rcl,
%\end{multline}
%where the constant $c_{\zeta}$ verifies $c_{\zeta} \leq c(|\zeta|_{\al} + |\mathbf{\zeta}^2|_{2\al})$ for a universal constant $c$.

\item It holds that 
\begin{equation}\label{eq:riemann-2}
\der z_{st}^{ij} = \lim_{|\Pi_{st}| \to 0} \sum_{q=0}^n \lc \mu_{t_q}^i  \nu_{t_q}^{\bzeta;j i_1} \der \bzeta_{t_q t_{q+1}}^{i_1} 
+ \lp \mu_{t_q}^i \nu_{t_q}^{\bzeta^2;j i_1 i_2} 
+ \mu_{t_q}^{\bzeta; i i_1} \nu_{t_q}^{\bzeta; j i_2} \rp \bzeta_{t_q t_{q+1}}^{2; i_2 i_1} +\eta_{t_q}^{ij} (t_{q+1} - t_q) \rc \, ,
\end{equation}
for any $a \leq s \leq t \leq b$. In \eqref{eq:riemann-2} the limit is taken over all partitions $\Pi_{st} = \{s=t_0, \ldots, t_n=t\}$ of $[s,t]$, as the mesh of the partition goes to zero. 
\end{enumerate}

\begin{proof}
For sake of completeness, we shall only prove relations \eqref{eq:int_contr-e} - \eqref{eq:int_contr-f} for smooth processes. The remaining arguments are standard considerations in rough path theory.

Let us then consider smooth paths $\mu, \nu, \zeta$ with smooth decompositions  \eqref{eq:int_contr-g}-\eqref{eq:int_contr-h}. Then $\int_s^t \mu_u^i d \nu_u^j$ can be defined as a Riemann-Stieltjes integral. Let us simply write 
$$
\int_s^t \mu_r^i d \nu_r^j = \mu_s^i \der \nu_{st}^j + \int_s^t \der \mu_{sr}^i d \nu_r^j.
$$
Then expand $\mu$ according to \eqref{eq:int_contr-g} and $\nu$ according to \eqref{eq:int_contr-h} after an integration by parts procedure (of the form $\int_s^t \der \mu_{sr} d\nu_r = \der \mu_{st} \der\nu_{st} - \int_s^t \der \nu_{sr} d\mu_r$). We let the reader go through the tedious algebraic details and check that we end up with a relation of the form
\begin{equation}\label{b2}
\int_s^t r_{sr}^{\mu, \nu;ij} \cdot d \zeta_r = \int_s^t \mu_r^i d\nu_r^j - \cg_{st}^{ij},
\end{equation}
where $\cg^{ij}$ is defined by \eqref{eq:int_contr-i}. Moreover, it is readily checked that $\der q_{sut}^{ij} = 0$, where $q_{st}^{ij}$ is the integral $\int_s^t \mu_r^i d \nu_r^j$.
Hence applying $\delta$ on  both sides of~\eqref{b2} we obtain 
$$
\der \lp \int r_{sr}^{\nu, \mu;ij} \cdot d \zeta_r \rp = \der \cg^{ij}.
$$
In order to define $\int \mu^i d \nu^j$ through the sewing map recalled in Proposition~\ref{prop:La}, it thus remains to show that $\der \cg^{ij} \in \cac_3^{3 \ka}([a,b])$. Let us now perform the algebraic computations leading to an expression for $\der \cg^{ij}$. First we apply the elementary Proposition~\ref{prop:der_rules} as well as our assumption~\eqref{eq:zeta^2} to the right hand side of \eqref{eq:int_contr-i}. This yields
\beq\label{eq:int_contr-k}
\der \cg_{sut}^{ij} = - \der \mu_{su}^i \nu_u^{\zeta; j i_1}
 \der \zeta_{ut}^{i_1} - \mu_s^i \der \nu_{su}^{\zeta; ji_1} \der \zeta_{ut}^{i_1} +
 (\mu_s^i \nu_s^{\zeta^2;j i_1 i_2} + \mu_s^{\zeta; i i_1} \nu_s^{\zeta; j i_2}) \der \zeta_{su}^{i_2} \der \zeta_{ut}^{i_1} + R_{sut}^{\cg; ij},
\eeq
where the remainder $R^{\cg}$ is defined as below and is easily seen as an element of $\cac_3^{3\ka}([a,b])$:
$$
R_{sut}^{\cg;ij} = \der {\lp \mu^i \nu^{\zeta^2; j i_1 i_2}  + \mu^{\zeta; i_1 i_2} \nu^{\zeta; j i_1}\rp}_{su} \der \zeta_{ut}^{2; i_2 i_1}.
$$
Next in \eqref{eq:int_contr-k} we use the decomposition \eqref{eq:int_contr-g} for $\der \mu^i$. We also invoke the fact that $\nu$ is a strongly controlled process, which implies that $\nu^{\zeta}$ admits the decomposition \eqref{eq:str-contr-l}. Some elementary manipulations then reveal that \eqref{eq:int_contr-k} can be simplified as
\beq\label{eq:int_contr-m}
\der \cg_{sut}^{ij} = \rho_{su}^{\mu;i} \nu_u^{\zeta; ji_1} \der \zeta_{ut}^{i_1} + \mu_s^i {\rho}_{su}^{\nu^{\zeta;ji_1}} \der \zeta_{ut}^{i_1} + R_{sut}^{\cg; ij}.
\eeq
It can be directly observed that $\der \cg^{ij} \in \cac_3^{3\ka}$ from expansion \eqref{eq:int_contr-m}. Hence a direct application of Proposition~\ref{prop:La} finishes our proof.
\end{proof}
\end{proposition}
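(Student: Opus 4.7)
The strategy is to define $z$ via the sewing construction and then verify both its controlled-path structure and the Riemann-sum description. First observe that the prescription $\cj^{ij}:=-\der\cg^{ij}$ automatically satisfies $\der\cj^{ij}=0$, since $\der\der=0$. The crucial technical point is therefore to prove $\cj^{ij}\in\cac_3^{3\ka}$, after which Proposition~\ref{prop:La} furnishes a unique $\laa(\cj^{ij})\in\cac_2^{3\ka}$ with $\der\laa(\cj^{ij})=\cj^{ij}$, so that~\eqref{eq:int_contr-e} unambiguously defines an increment of $z$ starting from $z_a=y^{ij}$.

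To bound $\der\cg^{ij}$, I would apply the product rule (Proposition~\ref{prop:der_rules}) to the two summands in~\eqref{eq:int_contr-i}. The first summand $\mu^i\,\nu^{\zeta;j i_1}\,\der\zeta^{i_1}$ produces $-\der(\mu^i\,\nu^{\zeta;ji_1})\cdot\der\zeta^{i_1}$, which expands into a piece involving $\der\mu^i$ and a piece involving $\der\nu^{\zeta;ji_1}$. For the second summand, the key identity is $\der\bzeta^{2;i_2 i_1}_{sut}=\der\zeta^{i_2}_{su}\der\zeta^{i_1}_{ut}$ from Hypothesis~\ref{hyp:zeta}. Substituting the weakly controlled decomposition~\eqref{eq:int_contr-g} for $\mu$, the strongly controlled decomposition~\eqref{eq:int_contr-h} for $\nu$, and the second-level decomposition~\eqref{eq:str-contr-l} for $\nu^{\zeta}$, the leading order-$\ka$ and order-$2\ka$ contributions cancel exactly. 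What remains is a sum of terms of three types: $\rho^{\mu;i}_{su}\nu^{\zeta;ji_1}_u\der\zeta^{i_1}_{ut}$, $\mu^i_s\,\rho^{\nu^{\zeta};ji_1}_{su}\der\zeta^{i_1}_{ut}$, and a remainder of the form $\der(\cdot)_{su}\cdot\bzeta^{2}_{ut}$, each of which has regularity at least $2\ka+\al\geq 3\ka$ and hence lies in $\cac_3^{3\ka}$. Tracking constants in this expansion yields the quantitative estimate in~\eqref{eq:int_contr-f}.

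With $\cj^{ij}\in\cac_3^{3\ka}$ in hand, I would next verify that $z\in\cq_{\zeta}^{\ka}$ by reading off the Gubinelli derivative from~\eqref{eq:int_contr-i}: the dominant $\der\zeta$-term identifies $z^{\zeta;iji_1}=\mu^i\nu^{\zeta;ji_1}$, and the remainder
\[
\mu^i_s\,\nu_s^{\bzeta^2;ji_1 i_2}\bzeta^{2;i_2 i_1}_{st}+\mu^{\bzeta;ii_2}_s\nu^{\bzeta;ji_1}_s\bzeta^{2;i_2 i_1}_{st}+\ci_{st}(\eta^{ij})+\laa_{st}(\cj^{ij})
\]
is manifestly in $\cac_2^{2\ka}$. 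The quantitative bounds follow by the same routine arguments that produced~\eqref{eq:6-b} in Proposition~\ref{prop:integral_as_weak}, combined with the estimate on $\cj^{ij}$. Agreement with the Riemann--Stieltjes integral when $\zeta$ is smooth is then a consequence of the uniqueness clause of the sewing lemma: in the smooth case, $\int_s^t\mu^i\,d\nu^j+\int_s^t\eta^{ij}\,du-\cg^{ij}_{st}-\ci_{st}(\eta^{ij})$ is a genuine primitive with increment in $\cac_2^{3\ka}$, so it must coincide with $\laa(\cj^{ij})$.

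Finally, the Riemann-sum formula~\eqref{eq:riemann-2} follows from the standard characterization of the sewing map as a limit of telescoping sums: along any partition $\Pi_{st}$,
\[
\der z_{st}=\sum_{q}\bigl[\cg^{ij}_{t_q t_{q+1}}+\eta^{ij}_{t_q}(t_{q+1}-t_q)\bigr]+\sum_{q}\laa(\cj^{ij})_{t_q t_{q+1}},
\]
and since $\laa(\cj^{ij})\in\cac_2^{3\ka}$ with $3\ka>1$, the last sum tends to zero as $|\Pi_{st}|\to 0$. The main obstacle in this plan is the bookkeeping behind the cancellations in $\der\cg$: it is here, and only here, that the stronger hypothesis on $\nu$ is indispensable, because the compensator $\nu^{\bzeta^2;ji_1 i_2}$ provided by the strongly controlled structure is precisely what matches the $\bzeta^2$-term generated by expanding $\der\mu\cdot\der\nu^{\zeta}$; without it, the expansion would leave an uncontrolled order-$2\ka$ residual incompatible with $\cac_3^{3\ka}$ regularity.
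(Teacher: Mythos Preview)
Your proposal is correct and follows essentially the same approach as the paper: both proofs hinge on computing $\der\cg^{ij}$ via the product rule and the Chen relation $\der\bzeta^{2;i_2 i_1}=\der\zeta^{i_2}\otimes\der\zeta^{i_1}$, then substituting the decompositions~\eqref{eq:int_contr-g} and~\eqref{eq:str-contr-l} to exhibit the cancellation, leaving precisely the terms $\rho^{\mu;i}_{su}\nu^{\zeta;ji_1}_u\der\zeta^{i_1}_{ut}$, $\mu^i_s\rho^{\nu^{\zeta};ji_1}_{su}\der\zeta^{i_1}_{ut}$, and $\der(\cdot)_{su}\bzeta^{2}_{ut}$ in $\cac_3^{3\ka}$. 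The paper frames the argument by first motivating the form of $\cg$ via smooth approximations, whereas you proceed directly from the sewing construction; your treatment of the Riemann-sum limit and the controlled-path structure of $z$ is slightly more explicit than the paper's, which defers these to ``standard considerations''.
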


We close this section by spelling out a technical result which will be used for our HJB equations. We shall thus consider controlled processes which are indexed by an additional spatial parameter $\theta$. We give a composition rule for such processes. 

\begin{proposition}\label{prop:str_comp}
For a given $\al > \frac{1}{3}$ and $\ka < \al$, let $\bzeta$ be a process satisfying Hypothesis~\ref{hyp:zeta}. Consider two $\cac^3(\R^m;\R^m)-$valued strongly controlled processes $\mu$ and $\nu$ admitting a decomposition of type~\eqref{eq:str-contr-a}:
\begin{align}\label{eq:mu_nu_str}
\der \mu^j(\theta) &= \mu^{\bzeta; j i_1}(\theta) \der \bzeta^{i_1} + \mu^{\bzeta^2; ji_1 i_2}(\theta) \bzeta^{2;i_2 i_1} + \rho^{\mu;j}(\theta) \nonumber\\
\der \nu^j(\theta) &= \nu^{\bzeta;ji_1}(\theta) \der \bzeta^{i_1} + \nu^{\bzeta^2; ji_1 i_2}(\theta) \bzeta^{2; i_2 i_1} + \rho^{\nu; j}(\theta),
\end{align}
for $\theta \in \R^m$. Then $\mu \circ \nu$ is another strongly controlled process with decomposition 
\begin{align}
{\lc \mu \circ \nu \rc}^{\bzeta; j i_1}(\theta) &= \mu^{\bzeta; j i_1}(\nu(\theta)) + \pt_{\eta^k} \mu^j(\nu(\theta)) \nu^{\bzeta; ki_1}(\theta) \label{eq:mu_nu_str_decomp-i}\\
{\lc \mu \circ \nu \rc}^{\bzeta^2; j i_1 i_2} (\theta) &= \mu^{\bzeta^2; j i_1 i_2}(\nu(\theta)) + \pt_{\eta^k} \mu^j(\nu(\theta)) \nu^{\bzeta^2; k i_1 i_2}(\theta) + \pt_{\eta^k} \mu^{\bzeta; j i_2}(\nu(\theta)) \nu^{\bzeta; k i_1}(\theta)\nonumber \\ 
&+ \pt_{\eta^k} \mu^{\bzeta; j i_1}(\nu(\theta)) \nu^{\bzeta; k i_2}(\theta) 
+\pt_{\eta^{k_1} \eta^{k_2}}^2 \mu^j (\nu(\theta)) \nu^{\bzeta; k_1 i_1}(\theta) \nu^{\bzeta; k_2 i_2}(\theta) \label{eq:mu_nu_str_decomp-ii}.
\end{align}
\end{proposition}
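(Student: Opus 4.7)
The plan is to establish the composition formula by a direct Taylor expansion up to the precision required by the strongly controlled notion of Definition~\ref{def:str-contr}, namely keeping all contributions up to order $2\kappa$ (explicit coefficients in front of $\der\zeta$ and $\bzeta^2$) and throwing everything of order $3\kappa$ into the remainder $\rho$. I would first write the basic decomposition
\[
\der(\mu\circ\nu)_{st}^j(\theta) = \big[\mu_t(\nu_t(\theta)) - \mu_s(\nu_t(\theta))\big]^j + \big[\mu_s(\nu_t(\theta)) - \mu_s(\nu_s(\theta))\big]^j,
\]
so that the first bracket is simply $\der\mu_{st}^j(\nu_t(\theta))$ and the second bracket depends only on the spatial variable of $\mu_s$. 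I would then Taylor expand the first bracket around the point $\nu_s(\theta)$ to first order in $\der\nu_{st}(\theta)$ (higher orders being $O(|\der\nu|^2 \cdot |\der\mu|) = O(|t-s|^{3\kappa})$), and the second bracket to second order in $\der\nu_{st}(\theta)$ (the third order term being $O(|t-s|^{3\kappa})$ because $|\der\nu|^3$ is of order $3\kappa$).

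Next I would inject the strong decompositions~\eqref{eq:mu_nu_str} for both $\der\mu_{st}(\nu_s(\theta))$ and $\der\nu_{st}(\theta)$ and collect terms. The piece $\der\mu_{st}^j(\nu_s(\theta))$ immediately produces the leading coefficients $\mu^{\bzeta;ji_1}(\nu(\theta))$ in front of $\der\zeta^{i_1}$ and $\mu^{\bzeta^2;ji_1i_2}(\nu(\theta))$ in front of $\bzeta^{2;i_2i_1}$. The term $\pt_{\eta^k}\mu_s^j(\nu_s(\theta))\,\der\nu_{st}^k(\theta)$ splits into its $\der\zeta$-part (contributing $\pt_{\eta^k}\mu^j(\nu(\theta))\,\nu^{\bzeta;ki_1}(\theta)$ to the $\der\zeta^{i_1}$-coefficient) and its $\bzeta^2$-part (contributing $\pt_{\eta^k}\mu^j(\nu(\theta))\,\nu^{\bzeta^2;ki_1i_2}(\theta)$ to the $\bzeta^{2;i_2i_1}$-coefficient). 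The cross term $\pt_{\eta^k}\der\mu_{st}(\nu_s(\theta))\,\der\nu_{st}^k(\theta)$ only contributes through the $\der\zeta$-parts of both factors, producing a product of two $\der\zeta$'s which I rewrite using the identity $\der\zeta^{i_1}_{st}\der\zeta^{i_2}_{st}=\bzeta^{2;i_1i_2}_{st}+\bzeta^{2;i_2i_1}_{st}$ from Hypothesis~\ref{hyp:zeta}; after relabeling indices this yields the sum $\pt_{\eta^k}\mu^{\bzeta;ji_2}(\nu(\theta))\nu^{\bzeta;ki_1}(\theta)+\pt_{\eta^k}\mu^{\bzeta;ji_1}(\nu(\theta))\nu^{\bzeta;ki_2}(\theta)$ in the $\bzeta^{2;i_2i_1}$-coefficient. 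Finally, the quadratic term $\tfrac12\pt^2_{\eta^{k_1}\eta^{k_2}}\mu_s^j(\nu_s(\theta))\,\der\nu^{k_1}_{st}\der\nu^{k_2}_{st}$, keeping only $\der\zeta$-parts of each $\der\nu$ and using the same symmetrization together with the symmetry of the Hessian, produces the remaining term $\pt^2_{\eta^{k_1}\eta^{k_2}}\mu^j(\nu(\theta))\nu^{\bzeta;k_1i_1}(\theta)\nu^{\bzeta;k_2i_2}(\theta)$ of~\eqref{eq:mu_nu_str_decomp-ii}.

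It then remains to check that all leftover terms — products $\rho^{\mu}$ with $\der\zeta$, products $\pt_{\eta^k}\mu^{\bzeta}$ with $\rho^{\nu}$, the third order Taylor remainders in both brackets, and all higher order products of the form $\mu^{\bzeta^2}\bzeta^2\cdot\der\nu$ etc.~— genuinely belong to $\cac_2^{3\kappa}$; this follows from the regularities listed in~\eqref{eq:str-contr-b} together with the $\cac^3$-regularity assumed on $\mu_s,\nu_s$ as functions of $\theta$. To conclude that $\mu\circ\nu$ is a \emph{strongly} controlled process, one must also verify that $[\mu\circ\nu]^{\bzeta;ji_1}$ from~\eqref{eq:mu_nu_str_decomp-i} is itself weakly controlled as in~\eqref{eq:str-contr-l}; this amounts to applying Proposition~\ref{prop:smooth_of_weak} (the time-dependent composition rule) to $\mu^{\bzeta;ji_1}(\nu(\cdot))$ and to $\pt_{\eta^k}\mu^j(\nu(\cdot))\,\nu^{\bzeta;ki_1}(\cdot)$, both of which are compositions and products of weakly controlled paths with $\cac^2$ functions.

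The main technical obstacle is the bookkeeping in the second step: after substituting the two strong expansions into the four Taylor terms, there are many cross contributions, and the correct identification of the $\bzeta^2$-coefficient requires careful use of the Hypothesis~\ref{hyp:zeta} symmetrization identity together with relabeling of summation indices and the symmetry of the Hessian of $\mu$. Once this algebraic identification is done cleanly, the $\cac_2^{3\kappa}$ estimate for the remainder and the weakly-controlled nature of $[\mu\circ\nu]^{\bzeta}$ are routine consequences of Propositions~\ref{prop:La} and~\ref{prop:smooth_of_weak}.
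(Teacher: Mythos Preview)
Your proposal is correct and follows essentially the same strategy as the paper: split the increment of the composition into two pieces via an intermediate point, Taylor expand in the spatial variable, inject the strong decompositions~\eqref{eq:mu_nu_str}, and use the geometric identity $\der\zeta^{i_1}\der\zeta^{i_2}=\bzeta^{2;i_1i_2}+\bzeta^{2;i_2i_1}$ from Hypothesis~\ref{hyp:zeta} together with Hessian symmetry to identify the $\bzeta^2$-coefficient. The only difference is cosmetic: the paper takes the intermediate point $\mu_t(\nu_s(\theta))$ (so that $\ca^2=\der\mu_{st}(\nu_s(\theta))$ is already based at $\nu_s$, while $\ca^1$ requires shifting $\mu_t\to\mu_s$ via the strong expansion of $\partial_{\eta^k}\mu$), whereas you take the intermediate point $\mu_s(\nu_t(\theta))$ (so that your first bracket $\der\mu_{st}(\nu_t(\theta))$ requires shifting $\nu_t\to\nu_s$ by a first-order spatial Taylor expansion). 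Both routes lead to the same algebra and the same remainder estimates; your final paragraph on checking that $[\mu\circ\nu]^{\bzeta}$ is itself weakly controlled is a point the paper leaves implicit.
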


\begin{proof}
Let us first decompose the increment $\der [\mu \circ \nu(\theta)]_{st}$ as follows:
$$
\der [\mu \circ \nu](\theta)_{st} = \ca_{st}^1(\theta) + \ca_{st}^2(\theta),
$$
where
$$
\ca_{st}^1(\theta) = \mu_t(\nu_t(\theta)) - \mu_t(\nu_s(\theta)), \quad \text{and} \quad
\ca_{st}^2(\theta) = \mu_t(\nu_s(\theta)) - \mu_s(\nu_s(\theta)).
$$
The next step is to derive the strongly-controlled representations for $\ca^1$ and $\ca^2$ separately. To deal with the former term we have the following Taylor expansion using the fact that $\mu$ is thrice differentiable in space:
\begin{align}\label{eq:ca^1-i}
\ca_{st}^{1;j}(\theta) &= \mu_t^j(\nu_t(\theta)) - \mu_t^j(\nu_s(\theta)) \nonumber \\
&=  \pt_{\eta^k} \mu_t^j(\nu_s(\theta)) \der \nu_{st}^k(\theta) + \dfrac{1}{2} \pt_{\eta^{k_1} \eta^{k_2}}^2 \mu_t^j(\nu_s(\theta)) \der \nu_{st}^{k_1}(\theta) \der \nu_{st}^{k_2}(\theta) + R_{st},
\end{align}
where here and in the following $R$ is a generic $\cac^{\mu}$ remainder term for $\mu > 1$. Using the strongly controlled process representation for $\nu$ in \eqref{eq:mu_nu_str} we now have from \eqref{eq:ca^1-i} that
\beq\label{eq:ca^1-ii}
\ca_{st}^{1;j}(\theta) = \pt_{\eta^k} \mu_t^j(\nu_s(\theta)) F_{st}^{k} 
+ \dfrac{1}{2} \pt_{\eta^{k_1} \eta^{k_2}} \mu_t^j(\nu_s(\theta)) F_{st}^{k_1} F_{st}^{k_2} + R_{st},
\eeq
where 
$$
F_{st}^{k} = \nu_s^{\bzeta; k i_1}(\theta) \der \bzeta_{st}^{i_1} + \nu_s^{\bzeta^2; k i_1 i_2} (\theta) \bzeta_{st}^{2; i_1 i_2} + \rho_{st}^{\nu;k}(\theta).
%F_{st}^{2;k_1} &=  \nu_s^{\bzeta; k_1 i_1} (\theta) \der \bzeta_{st}^{i_1} + \nu_s^{\bzeta^2;k_1 i_1 i_2}(\theta) \bzeta_{st}^{2; i_1 i_2} + \rho_{st}^{\nu; k_1}(\theta)  \\
%F_{st}^{3;k_2} &= \nu_s^{\bzeta; k_2 i_1} (\theta) \der \bzeta_{st}^{i_1} + \nu_s^{\bzeta^2; k_2 i_1 i_2}(\theta) \bzeta_{st}^{2; i_1 i_2} + \rho_{st}^{\nu; k_2}(\theta)
$$
Taking derivatives of $\mu$ with respect to space in the strongly controlled relation for $\mu$ in \eqref{eq:mu_nu_str}, and using them in \eqref{eq:ca^1-ii} we have 
\begin{align}\label{eq:ca^1-iii}
\ca_{st}^{1;j}(\theta) &= \pt_{\eta^k} \mu_s^j(\nu_s(\theta)) F_{st}^{k} 
+ \dfrac{1}{2} \pt_{\eta^{k_1} \eta^{k_2}} \mu_s^j(\nu_s(\theta)) F_{st}^{k_1} F_{st}^{k_2}+ \lp \pt_{\eta^k} \mu_t^j(\nu_s(\theta)) - \pt_{\eta^k} \mu_s^j(\nu_s(\theta)) \rp F_{st}^{k}\nonumber \\
&+ \dfrac{1}{2} \lp \pt_{\eta^{k_1} \eta^{k_2}} \mu_t^j(\nu_s(\theta)) - \pt_{\eta^{k_1} \eta^{k_2}} \mu_s^j(\nu_s(\theta)) \rp F_{st}^{k_1} F_{st}^{k_2} + R_{st},
\end{align}
Collecting terms in \eqref{eq:ca^1-iii} and using assumption~\eqref{eq:zeta^2} we have that
\begin{align*}
\ca_{st}^{1;j}(\theta) &= \pt_{\eta^k} \mu_s^j(\nu_s(\theta)) \nu_s^{\bzeta; k i_1}(\theta) \der \bzeta_{st}^{i_1} \\
&+ \lp \pt_{\eta^k} \mu_s^j (\nu_s(\theta)) \nu_s^{\bzeta^2; k i_1 i_2}(\theta) + \pt_{\eta^{k_1} \eta^{k_2}}^2 \mu_s^j(\nu_s(\theta)) \nu_s^{\bzeta; k_1 i_1} \nu_s^{\bzeta; k_2 i_2} \right.\\
&\left.+ \quad\pt_{\eta^k} \mu_s^{\bzeta; j i_1}(\nu_s(\theta)) \nu_s^{\bzeta; k i_2}(\theta) + \pt_{\eta^k} \mu_s^{\bzeta; ji_2} (\nu_s(\theta))\nu_s^{\bzeta;k i_1}(\theta) \rp \bzeta_{st}^{2; i_1 i_2} + R_{st}.
\end{align*}
A similar result holds for $\ca_{st}^{2;j}(\theta)$ and combining these two relations we get our desired result.
\end{proof}

\section{Rough differential equation with drift.}\label{sec:rded} In this section we consider a rough path $\zeta$ satisfying Hypothesis~\ref{hyp:zeta}. Our aim is to consider a rough differential equation taking values in $\R^m$, of the form
%Recall Hypothesis~\eqref{eq:weakly-controlled} for the rough path $\zeta \in \cac^{\al}([0,T];\R^d)$ and Definition~\ref{def:weakly-ctrld} for weakly controlled paths. Consider the differential equation:
\beq\label{eq:de}
dx_s = b(s, x_s) \, ds + \si(s, x_s) \, d\zeta_s, \quad s \in [0,T], \quad \text{ and }x_0= y,
\eeq
where the initial data $y$ is an element of $\R^m$ and $b:[0,T) \times \R^m \mapsto \R^m$, $\si:[0,T) \times \R^m \mapsto \R^{m,d}$. Notice that the solution to \eqref{eq:de} will be understood in the rough path sense. Namely a solution of \eqref{eq:de} is a continuous path $x \in \cac^{\al}([0,T]; \R^m)$ such that $x_0 = a$ and
\beq\label{eq:de-2}
\der x_{st} = \int_s^t b(u, x_u) \, du + \int_s^t \si(u, x_u) \, d\zeta_u, \qquad \text{ for all } s,t \in [0,T],
\eeq
and where the second integral is interpreted through the theory of integration of weakly controlled paths (see Section~\ref{sec:weak-contr}).%based on $\zeta$ \cite[Theorem~1]{gubinelli}.
\begin{remark}\label{rmk:b-irregular}
Equations of the form \eqref{eq:de} are elaborations of more classical rough differential equations, allowing for a drift term $b$ and a time dependence in the coefficients $b$, $\si$. It should be observed that one could treat \eqref{eq:de} as a usual rough differential equation, by considering the time $t$ as a component of an enlarged rough path $(t, \zeta)$. However, in order to get better regularity conditions on the coefficients $b$, $\si$ it is convenient to consider the time component separately. This is what we do in Proposition~\ref{prop:exi+uni} below, whose proof is included for lack of a reference working under the same conditions 
(although one could argue that our case is covered by \cite{friz-hoquet-le} in a stochastic-rough context, with much longer proofs). Notice that those minimal regularity conditions will be needed when we consider equations involving a rough control.
Also observe that the decomposition \eqref{eq:b} for the solution to equation~\eqref{eq:de} will play an important role in our future considerations.
\end{remark}

\subsection{Existence and uniqueness result}
This section is devoted to state and prove our basic existence and uniqueness result with minimal continuity assumptions on the drift coefficient. We first label the main assumptions on our coefficients for further use.
\begin{hypothesis}\label{hyp:de-b,si}
In equation~\eqref{eq:de-2}, we suppose that the coefficients $b$ and $\si$ satisfy
$$
b \in \cac_b^{{0},1} ([0,T] \times \R^m; \R^{m}), \quad \text{and} \quad \si \in \cac_b^{1,2}([0,T] \times \R^m; \R^{m,d}).
$$ 
%for a given $\ep > 0$.
\end{hypothesis}
% This is spelled out in the following proposition.
\begin{proposition}\label{prop:exi+uni}
Let $\al > \frac{1}{3}$ and suppose $\zeta \in \cac^{\al}([0,T]; \R^d)$ is a path satisfying Hypothesis~\ref{hyp:zeta}. Assume the coefficients $b$ and $\si$ satisfy Hypothesis~\ref{hyp:de-b,si}. Recall that the spaces $\cq_{\bzeta}^{\ka}$ are introduced in Definition~\ref{def:weakly-ctrld}. Then for every $\ka \in (\frac{1}{3}, \al)$, there exists an $x \in \cq_{\bzeta}^{\ka}([0,T]; \R^m)$ which solves \eqref{eq:de} in its integral form \eqref{eq:de-2}, where the RHS of \eqref{eq:de-2} has to be understood as in Proposition~\ref{prop:integral_as_weak}. If in addition $\si \in C_b^{1,3}([0,T]\times \R^m; \R^{m,d})$, the solution $x$ is also unique.
\end{proposition}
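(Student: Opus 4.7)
The plan is a Picard-type fixed point argument in the space of weakly controlled paths $\cq_{\bzeta}^{\ka}([0, T_0];\R^m)$ on a sufficiently small time interval $[0, T_0]$, followed by patching of the local solutions to cover $[0, T]$.

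\textbf{Step 1: The fixed point map.} Given $x \in \cq_{\bzeta}^{\ka}([0, T_0])$ with decomposition $\der x^i_{st} = x^{\bzeta; i i_1}_s \der \zeta^{i_1}_{st} + \rho^{x; i}_{st}$, define
\begin{equation*}
\Gamma(x)_t = y + \int_0^t b(u, x_u)\, du + \int_0^t \si(u, x_u)\, d\zeta_u,
\end{equation*}
where the second integral is interpreted through Proposition~\ref{prop:integral_as_weak}. The composition rule in Proposition~\ref{prop:smooth_of_weak} applied to $L = \si$ (whose assumption $\tau \geq 2\ka$ is met by $\si \in \cac_b^{1,2}$ as long as $\ka \leq 1/2$) shows that $\si(\cdot, x_\cdot) \in \cq_{\bzeta}^{\ka}$ with Gubinelli derivative $\si(\cdot, x)^{\bzeta}_s = \nabla_x \si(s, x_s)\, x^{\bzeta}_s$ and satisfies $\cn[\si(\cdot, x); \cq_{\bzeta}^{\ka}] \leq c_\si\, (1 + \cn[x; \cq_{\bzeta}^{\ka}]^2)$. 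Applying Proposition~\ref{prop:integral_as_weak} with $\mu = \si(\cdot, x)$ and $\eta = b(\cdot, x)$ (which lies in $\cac_b^0$ since $b$ is bounded and continuous in time) yields $\Gamma(x) \in \cq_{\bzeta}^{\ka}$ with $\Gamma(x)^{\bzeta}_s = \si(s, x_s)$ and, via~\eqref{eq:6-b}, the estimate
\begin{equation*}
\cn[\Gamma x; \cq_{\bzeta}^{\ka}([0, T_0])] \leq c_{\bzeta} \lcl 1 + \|\si\|_\infty + T_0^{\al - \ka}\big( c_\si (1 + \cn[x; \cq_{\bzeta}^{\ka}]^2) + \|b\|_\infty \big) \rcl.
\end{equation*}

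\textbf{Step 2: Invariant ball and existence.} Fix a radius $R$ depending on $\|\si\|_\infty$, $\|b\|_\infty$ and $c_{\bzeta}$, and then choose $T_0$ small enough (depending on $R$ and $\|\bzeta\|_\al$) so that $\Gamma$ maps the closed ball $B_R \subset \cq_{\bzeta}^{\ka}([0, T_0])$ of controlled paths with $x_0 = y$ into itself. Existence of a fixed point follows from Schauder's theorem: the set $B_R$ is convex and compact for the uniform topology by Arzel\`a-Ascoli, and $\Gamma$ is continuous from $(B_R, \|\cdot\|_\infty)$ to itself (since only the boundedness and continuity of $b$ are used through $\eta$, while the rough integral depends continuously on its controlled-path data). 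Equivalently, one may approximate $b$ by time-Lipschitz drifts $b_n$ and extract a convergent subsequence of Picard iterates using the uniform bound above.

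\textbf{Step 3: Uniqueness under $\si \in \cac_b^{1,3}$.} For two solutions $x, \tilde x$ with the same initial condition, the goal is to show that $\cn[x - \tilde x; \cq_{\bzeta}^{\ka}([0, T_0])] = 0$. One writes $x - \tilde x = \Gamma(x) - \Gamma(\tilde x)$ and estimates $\cn[\si(\cdot, x) - \si(\cdot, \tilde x); \cq_{\bzeta}^{\ka}]$ by $c\, \cn[x - \tilde x; \cq_{\bzeta}^{\ka}]$ up to a factor that is a polynomial in $\cn[x; \cq_{\bzeta}^{\ka}] + \cn[\tilde x; \cq_{\bzeta}^{\ka}]$. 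The key point is that the remainder $\hat\rho$ in~\eqref{eq:6-terms} involves an expression of the form $\nabla_x \si(s, x_s) - \nabla_x \si(s, \tilde x_s)$ paired with rough increments, whose control requires Lipschitz continuity of $\nabla^2_x \si$, hence $\si \in \cac_b^{1,3}$. Combining this Lipschitz estimate with~\eqref{eq:6-b} gives
\begin{equation*}
\cn[x - \tilde x; \cq_{\bzeta}^{\ka}([0, T_0])] \leq C\, T_0^{\al - \ka}\, (1 + R^2)\, \cn[x - \tilde x; \cq_{\bzeta}^{\ka}([0, T_0])],
\end{equation*}
so $x \equiv \tilde x$ on $[0, T_0]$ provided $T_0$ is sufficiently small.

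\textbf{Step 4: Patching.} The constants $R$ and $T_0$ depend only on $\|\bzeta\|_\al$, $\|b\|_\infty$, $\|\si\|_{\cac_b^{1,3}}$, not on the initial condition $y$. Iterating the construction on the intervals $[k T_0, (k+1) T_0]$ with the endpoint value as new initial condition and reconciling Gubinelli derivatives across grid points produces a global solution $x \in \cq_{\bzeta}^{\ka}([0, T]; \R^m)$, with uniqueness inherited from the local statement.

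\textbf{Main obstacle.} The delicate point is existence under the minimal time-regularity $b \in \cac_b^{0,1}$: the Picard map is not a contraction with these assumptions, which forces either a Schauder fixed point argument or a careful approximation of $b$ by time-regular drifts. The uniqueness proof is then a separate quantitative argument in which the extra derivative $\nabla^2_x \si$ (and its Lipschitz continuity, thus $\si \in \cac_b^{1,3}$) enters precisely to control the cross term $[\si(t, x_t) - \si(s, x_t)] - \nabla_x \si(s, x_s)\, \der x_{st}$ appearing in~\eqref{eq:6-terms} when taking differences of two solutions.
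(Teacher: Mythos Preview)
Your proposal is correct and follows the same overall strategy as the paper: a Tychonoff/Schauder fixed point for existence in the ball $B_R\subset\cq_{\bzeta}^{\ka}$, and a separate contraction estimate for uniqueness when $\si\in\cac_b^{1,3}$, followed by patching on subintervals. The paper in fact omits the existence argument and spells out only the uniqueness part; your Steps~1--4 match this closely, and your contraction bound in Step~3 is exactly the paper's relation $\|G(x)-G(\tilde x)\|_{\ast,T}\le c\,T^{\nu}\|x-\tilde x\|_{\ast,T}$.

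One point in your commentary should be corrected. In the ``Main obstacle'' paragraph you attribute the failure of contraction (and hence the need for a Schauder argument) to the minimal time-regularity $b\in\cac_b^{0,1}$. This is not the reason: the drift contributes only $\|\ci(\eta-\tilde\eta)\|_{2\ka}\le T^{1-2\ka}\|x-\tilde x\|_\infty$, which is harmless for contraction under $b\in\cac_b^{0,1}$. What actually blocks a direct contraction for existence is that $\si$ is only assumed $\cac_b^{1,2}$. To get the Lipschitz estimate $\cn[\si(\cdot,x)-\si(\cdot,\tilde x);\cq_{\bzeta}^{\ka}]\lesssim\cn[x-\tilde x;\cq_{\bzeta}^{\ka}]$ one needs to control differences of the second-order remainder in~\eqref{eq:6-terms}, which requires $\nabla_x^2\si$ to be Lipschitz, i.e.\ $\si\in\cac_b^{1,3}$. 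This is precisely why the statement separates existence (under $\cac_b^{1,2}$, via a compactness fixed point) from uniqueness (under $\cac_b^{1,3}$, via contraction). Your Step~3 already uses this correctly; only the diagnosis in the final paragraph is off.
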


\begin{proof}
The proof is an elaboration of \cite[Proposition~7-8]{gubinelli} which takes into account the time components in $\si$ and $b$, as well as the drift term $b$. 
For sake of conciseness we will only focus on the uniqueness part. We divide the proof in several steps. 

\noindent\textbf{(i)} \emph{Setting}. 
We take two paths $x$ and $\tilde{x}$ in $\cq_{\bzeta}^{\ka}([0,T]; \R^m)$ solving the equation~\eqref{eq:de}, such that we also have $x_0 = \tilde{x}_0$. 
Let us write a solution $x$ as the root $x = G(x)$ for a map
$$
G: \cq_{\bzeta}^{\ka} ([0,T]; \R^m) \mapsto \cq_{\bzeta}^{\ka} ([0,T]; \R^m),
$$
where $G(x) = z$ is defined by
\beq\label{eq:z-G}
\der z_{st} = \int_s^t b(u, x_u) \, du + \int_s^t \si(u, x_u) \, d\zeta_u.
\eeq
Let us also specify the weakly controlled process decomposition for a process defined by \eqref{eq:z-G}. Namely assume that $x$ can be written as 
$$\der x = x^{\bzeta} \der \bzeta + r^{x}.$$
Next consider the path $w_u = \si(u, x_u)$. Proposition~\ref{prop:smooth_of_weak} implies $w \in \cq_{\zeta}^{\kappa}$ with the following decomposition:
\beq\label{eq:w-decomp}
\der w_{st} = w_s^{\zeta} \, \der \zeta_{st} + r_{st}^w,
\eeq
where $w^{\zeta}$ and $r^w$ are respectively defined by
\begin{eqnarray}\label{eq:w,r}
w_s^{\zeta} &=& \nabla_z \si(s, x_s) x^{\zeta} \nonumber \\
r_{st}^w &=& \nabla_z \si(s, x_s) r_{st}^x + [(\der\si(s,x))_{st} - \nabla_z \si(s, x_s) \der x_{st}] + {\der \si(\cdot, x_t)_{st}}.
\end{eqnarray}
%\textcolor{red}{check this last term above. Should it be $(\delta \sigma(\cdot,x))_{st}$?}

Therefore gathering relations \eqref{eq:b} and \eqref{eq:w-decomp}, we get that the path $z$ in equation~\eqref{eq:z-G} satisfies
\beq\label{eq:z-2}
\der z_{st} = w_s \der \zeta_{st} + R_{st}^z = \si(s, x_s) \der \zeta_{st} + \nabla_z \si(s, x_s) \si(s, x_s) \mathbf{\zeta}_{st}^2 + \ci_{st}(\eta) + Q_{st}^z,
\eeq
where we have set
\beq\label{eq:eta-Q^z}
\eta_u = b(u, x_u), \quad \text{and} \quad Q^z = \laa(r^x \der \bzeta + \der x^{\bzeta} \bzeta^2).
\eeq
Similarly and with obvious notation, we get that the second solution $\tilde{x}$ to \eqref{eq:de} with 
$$\der \tilde{x} = \tilde{x}^{\bzeta} \der \bzeta + r^{\tilde{x}},$$ verifies the relation $\tilde{x} = G(\tilde{x})$ where $\tilde{z} = G(\tilde{x})$ can be decomposed as
\beq\label{eq:tilde-z-2}
\der \tilde{z}_{st} = \tilde{w}_s \der \zeta_{st} + R_{st}^{\tilde{z}} =\si(s, \tilde{x}_s) \der \zeta_{st} + \nabla_z \si(s, \tilde{x}_s) \si(s, \tilde{x}_s) \mathbf{\zeta}_{st}^2 + \ci_{st}(\tilde{\eta}) + Q_{st}^{\tilde{z}}.
\eeq
Let us introduce the notation 
\beq\label{eq:phi-norm}
\|\varphi\|_{\ast, T} := \cn[\varphi; \cq_{\zeta}^{\ka}([0,T]; V)]
\eeq
for an appropriate space $V$. 
We would like to show that
\beq\label{eq:obj}
\|z - \tilde{z}\|_{\ast, T} \leq c T^{\nu} \|x-\tilde{x}\|_{\ast, T}, 
\eeq
for some positive constants $c$ and $\nu$. In order to achieve \eqref{eq:obj}, we recall our definition~\eqref{eq:norm-controlled} for $|\cdot|_{\ast, T}$. Hence we will
obtain similar upper bounds separately for each of $\|w-\tilde{w}\|_{\ka, T}$, $\|w-\tilde{w}\|_{\infty, T}$, $\|z-\tilde{z}\|_{\ka, T}$ and $\|R^{z} - R^{\tilde{z}}\|_{2\ka, T}$ in the rhs of~\eqref{eq:z-2} and \eqref{eq:tilde-z-2}. We give a few details about those issues in the next step.

\vspace{0.1in}
\noindent
\textbf{(ii)} \emph{Bound for the derivatives}. Our first objective is to bound $\|w-\tilde{w}\|_{\ka, T}$. 
To this aim, recall that $w_s = \si(s, x_s)$, $\tilde{w}_s = \si(s, \tilde{x}_s)$. Since we wish to extract a factor $T^{\nu}$ from our estimate, we will use the controlled process structure of those two objects.
Namely invoking \eqref{eq:w-decomp} and~\eqref{eq:w,r} for $w$ and a similar decomposition for $\tilde{w}$ we get that
$$
\der (w - \tilde{w})_{st} = (w_s^{\zeta} - \tilde{w}_s^{\zeta}) \der \bzeta_{st} + (r_{st}^w - r_{st}^{\tilde{w}}),
$$
from which we easily deduce
\beq\label{eq:w-tildew-n1}
\|w-\tilde{w}\|_{\ka, T} 
\leq 
\| (w^{\bzeta} - \tilde{w}^{\bzeta}) \der \zeta\|_{\ka, T} + \|r^w - r^{\tilde{w}}\|_{\ka, T}. 
\eeq
In order to bound the rhs of \eqref{eq:w-tildew-n1}, let us write
\beq\label{eq:w-tildew-n2}
\| (w^{\bzeta} - \tilde{w}^{\bzeta}) \der \zeta\|_{\ka, T} \leq {\| (w^{\bzeta} - \tilde{w}^{\bzeta}) \|}_{\ka, T} {\|\der \zeta\|}_{\ka, T}. 
\eeq
Moreover, owing to the fact that $\bzeta \in \cac^{\al} ([0,T])$ with $\al > \ka$, we have ${\|\bzeta\|}_{\ka, T} \leq C_{\bzeta, T} T^{\al - \ka}$. Therefore one can write
\beq\label{eq:w-tildew-n3}
{\| (w^{\bzeta} - \tilde{w}^{\bzeta}) \der \bzeta \|}_{\ka, T} \leq C_{\bzeta, T} {\|w^{\bzeta} - \tilde{w}^{\bzeta}\|}_{\ka, T} T^{\al - \ka}.
\eeq

Next recalling that $w_u^{\bzeta} = \si(u, x_u)$, $\tilde{w}^{\bzeta} = \si(u, \tilde{x}_u)$ and invoking the $\cac_b^{1,3}$-regularity of $\si$, we easily get
\beq\label{eq:w-tildew-n4}
{\| w^{\bzeta} - \tilde{w}^{\bzeta} \|}_{\ka, T} \leq c_{\si} \|x - \tilde{x}\|_{\ka, T}.
\eeq
Gathering \eqref{eq:w-tildew-n3} and \eqref{eq:w-tildew-n4} into \eqref{eq:w-tildew-n2}, we have thus obtained
\beq\label{eq:w-tildew-n5}
\| (w^{\bzeta} - \tilde{w}^{\bzeta}) \der \bzeta \|_{\ka, T} \leq c_{\si, \bzeta, T} \|x - \tilde{x}\|_{\ka, T} T^{\al - \ka}.
\eeq 
For sake of conciseness, we leave the estimate of ${\|r - \tilde{r}\|}_{\ka, T}$ in \eqref{eq:w-tildew-n1} to the reader. It is a matter of standard rough paths analysis, based on tedious Taylor expansions. In the end we discover that
$$
\|r^w - r^{\tilde{w}}\|_{2\ka, T} \leq c_{\si, \bzeta, T} \|x - \tilde{x}\|_{\ast, T},
$$
where we recall that the notation $\|\cdot\|_{\ast, T}$ is defined by \eqref{eq:phi-norm}. Thus
\beq\label{eq:r-tilder-n1}
\|r^w - r^{\tilde{w}}\|_{\ka, T} \leq c_{\si, \bzeta, T} \| x- \tilde{x} \|_{\ast, T} T^{\ka}. 
\eeq
We now plug \eqref{eq:w-tildew-n5} and \eqref{eq:r-tilder-n1} into \eqref{eq:w-tildew-n1}, which yields 
%Then invoking \eqref{eq:u-1}-\eqref{eq:u-1''} plus some elementary considerations for the first term on the rhs above, we get that
\beq\label{eq:u-7}
\|w-\tilde{w}\|_{\ka, T} 
\leq 
c_{\si, \bzeta, T} \|x-\tilde{x}\|_{\ast, T} T^{\ka}.
\eeq
From \eqref{eq:u-7}, together with the fact that $w_0 = \tilde{w}_0$, we also obtain the following bound:
\beq\label{eq:u-8}
\|w-\tilde{w}\|_{\infty, T} \leq |w_0 - \tilde{w}_0| + T^{\ka} \|w-\tilde{w}\|_{\ka, T} \leq T^{2\ka} c_{\si, \bzeta, T} \|x-\tilde{x}\|_{\ast, T}.
\eeq
%From \eqref{eq:u-5}, \eqref{eq:u-6}, \eqref{eq:u-1} and \eqref{eq:u-7} we have
%\begin{eqnarray*}
%\|w-\tilde{w}\|_{\ast, T} &=& 
%\|w^{\zeta} - \tilde{w}^{\zeta}\|_{\ka, T} + \|w^{\zeta} - \tilde{w}^{\zeta}\|_{\infty, T} + \|r^w - r^{\tilde{w}}\|_{2\ka, T} + \|w-\tilde{w}\|_{\ka, T} \\
%&\leq& 
%c_{\si} C_{\zeta, T} C_{X,T} \|x-\tilde{x}\|_{\ast, T}.
%\end{eqnarray*}

\vspace{0.1in}
\noindent
\textbf{(iii)} \emph{Global bound for $G(x)$}. 
To complete our bound for $\|z-\tilde{z}\|_{\ast, T}$, recall our decompositions \eqref{eq:z-2}-\eqref{eq:tilde-z-2}. In those expressions, we next obtain upper bounds for ${|R^z - R^{\tilde{z}}|}_{2 \ka, T}$ and ${\|z-\tilde{z}\|}_{\ka, T}$. Notice that 
$$
|\ci_{st}(\eta - \tilde{\eta})| \leq \int_s^t |b(u, x_u) - b(u, \tilde{x}_u)| du \leq C\int_s^t |x_u - \tilde{x}_u| du \leq C|t-s| \|x-\tilde{x}\|_{\infty, T}.
$$ 
Consequently we have that
\beq\label{eq:u-9}
\|\ci (\eta - \tilde{\eta})\|_{2\ka, T} \leq T^{1-2\ka} \|x-\tilde{x}\|_{\ast, T}.
\eeq
Let us now estimate the terms $Q^z$ defined by \eqref{eq:eta-Q^z}. Using Proposition~\ref{prop:La} and the fact that $\ka > \frac{1}{3}$, we have that
\begin{align} \label{eq:u-10}
\|Q^z - Q^{\tilde{z}}\|_{3\ka, T} &= {\| \laa((r^x - r^{\tilde{x}})\der \zeta + \der (x^{\zeta} - \tilde{x}^{\zeta}) \bzeta^2) \|}_{3\ka, T} \nonumber\\
&\leq c_{\ka} {\| (r^x -r^{\tilde{x}})\der \zeta + \der(x^{\zeta} - \tilde{x}^{\zeta})\bzeta^2 \|}_{3\ka, T}\nonumber\\
&\leq c_{\ka} \lp \|r^x - r^{\tilde{x}}\|_{2\ka, T} \|\zeta\|_{\ka} + \|x^{\zeta}-\tilde{x}^{\zeta}\|_{\ka} \|\bzeta^2\|_{2 \ka} \rp \nonumber\\
&\leq c_{\ka} C_{\zeta, T} \|x - \tilde{x}\|_{\ast, T}.
\end{align}
Now gathering \eqref{eq:u-9} and \eqref{eq:u-10} into \eqref{eq:z-2}-\eqref{eq:tilde-z-2} we have
\begin{align}\label{eq:u-11}
&\|R^z - R^{\tilde{z}}\|_{2\ka, T} = {\| (\nabla_z \si(\cdot, x) \si(\cdot, x) - \nabla_z \si(\cdot, \tilde{x}) \si(\cdot, \tilde{x})) \bzeta^2 + \ci(\eta - \tilde{\eta}) + (Q^{z} - Q^{\tilde{z}}) \|}_{2\ka, T} \nonumber\\
&\leq {\|\nabla_z \si(\cdot, x) \si(\cdot, x) - \nabla_z \si(\cdot, \tilde{x}) \si(\cdot, \tilde{x}) \|}_{\infty, T} \|\bzeta^2\|_{2 \ka} + \|\ci(\eta - \tilde{\eta})\|_{2\ka, T} + T^{\ka} \|Q^{z} - Q^{\tilde{z}}\|_{3\ka, T} \nonumber \\
&\leq T^{\ka} c_{\si} C_{\zeta, T} \|x-\tilde{x}\|_{\ast, T} + T^{1-2\ka} \|x-\tilde{x}\|_{\ast, T} + {T^{\ka}} C_{\ka, \bzeta, T} \|x-\tilde{x}\|_{\ast, T} \nonumber \\
 &\leq CT^{\ka} c_{\si} C_{\ka, \zeta, T} \|x-\tilde{x}\|_{\ast, T}.
\end{align} 
Furthermore owing to \eqref{eq:u-8} and \eqref{eq:u-11} we have
\begin{align}\label{eq:u-12}
\|z-\tilde{z}\|_{\ka, T} &\leq \|w-\tilde{w}\|_{\infty, T} \|\zeta\|_{\ka} + {\|R^z - R^{\tilde{z}}\|}_{\ka, T} \nonumber\\
&\leq T^{\ka} \|w-\tilde{w}\|_{\ka, T} \|\zeta\|_{\ka,T} + T^{\ka} \|R^{z} - R^{\tilde{z}}\|_{2\ka, T} \nonumber \\
&\leq 2T^{2\ka} c_{\si} C_{\ka, \zeta, T} \|x-\tilde{x}\|_{\ast, T}. 
\end{align}
Plugging \eqref{eq:u-7}, \eqref{eq:u-8}, \eqref{eq:u-11} and \eqref{eq:u-12} into definition \eqref{eq:norm-controlled}, we finally obtain our desired bound:
\begin{align*}
\|z-\tilde{z}\|_{\ast, T} &= \|w-\tilde{w}\|_{\infty, T} + \|w-\tilde{w}\|_{\ka, T} + \|R^z - R^{\tilde{z}}\|_{2\ka, T} + \|z-\tilde{z}\|_{\ka, T}\\
&\leq (3+T^{\ka})T^{\ka} c_{\si} C_{\zeta, T} \|x-\tilde{x}\|_{\ast, T} \leq 4T^{\ka} c_{\si} C_{\zeta, T} \|x-\tilde{x}\|_{\ast, T},
\end{align*}
for $T \leq 1$. Now choosing $T$ small enough we have
$$
\|z-\tilde{z}\|_{\ast, T} = \|G(x) - G(\tilde{x})\|_{\ast, T} \leq c \|x-\tilde{x}\|_{\ast, T}, 
$$
for some $0 < c < 1$. Thus for $T$ small enough, $G$ is a strict contraction in $\cq_{\zeta}^{\kappa} ([0,T]; \R^m)$ and thus has a unique fixed point. We can now obtain a global solution  belonging to $\cq_{\bzeta}^{\ka}$ on a general interval $[0,T]$ by patching together local solutions. This part of the proof is quite standard in rough paths analysis, and its details are left to the patient reader.
\end{proof}

\subsection{Application to the pathwise control setting}\label{sec:appli-pathwise-control}
Our ultimate aim is to consider a general pathwise control formulation which can be summarized as follows: we consider a rough differential equation \eqref{eq:rde-2} below, driven by a rough path $\bzeta$ and controlled by a path $\ga : [0, T] \to \cp(U)$. Here $\cp(U)$ is the set of Borel probability measures on $U \hb{\subseteq \R^k}$ and $(U,d)$ is a general control space which admits a metric $d$. Our differential equation can be written as
\beq\label{eq:rde-2}
d x_s^{\ga} = \int_U b(s, x_s^{\ga}, a)\ga_s(da)ds + \si(s, x_s^{\ga})d \zeta_s, \qquad x_0^{\ga} = y.
\eeq
In order to obtain existence and uniqueness of \eqref{eq:rde-2}, we will restrict $\ga$ to lie in a suitable subspace of measure-valued paths. %This is detailed in the following assumption.
Before specifying this hypothesis, let us establish the following notation.

\begin{notation}\label{not:P_2}
Let $\cp_2(U)$ be the set of probability measures on $U$ with finite second moments, equipped with the Wasserstein-2 metric
\beq\label{eq:W_2}
W_2(m_1, m_2) = \inf_{m \in \Gamma(m_1, m_2)}  {\lp \int \int d^2(x,y) dm(x,y)\rp}^{1/2},
\eeq
where $\Gamma(m_1, m_2)$ is the space of all couplings with marginals $m_1$ and $m_2$.
% In the sequel we consider a compact subset $\ck$ in $\cp_2(U)$. 
\end{notation}
Some of our arguments below will rely on compactness arguments. Therefore we will restrict our set of measures $\ga$ to be compact.

\begin{hypothesis}\label{hyp:ck}
In the sequel we shall consider a compact subset $\ck$ of the set $\cp_2(U)$ introduced in Notation~\ref{not:P_2}. 
\end{hypothesis}

{The set of measure-valued paths considered later will in fact enjoy some H\"older-continuity in time. This is to ensure that measure-valued paths driven by a rough path $\bzeta$ also fall under the purview of our analysis. Let us label another notation in order to describe this type of trajectories.}

\begin{notation}\label{not:ga}
Let $\ep, L$ be two strictly positive constants and consider the compact subset $\ck$ of Hypothesis~\ref{hyp:ck}. We define a set $\cv^{\ep,L}$ of measure-valued paths as follows:
$$
\cv^{\ep,L} = \lcl \ga:[0,T] \mapsto \ck: W_2(\ga_s, \ga_t) \leq L |t-s|^{\ep} \rcl. 
$$
%That is, $\ga \in \cv^{\ep, L}$ if and only if $W_2(\ga_s, \ga_t) \leq L |s-t|^{\ep}$, for some constant $L$. Furthermore we will assume that $L$ is uniformly bounded by some constant $C$.
In addition, we introduce a sup-distance between two paths $\ga^1, \ga^2 \in \cv^{\ep, L}$ in the following way:
\beq\label{eq:hh-a^2}
\hat{d}(\ga^1, \ga^2) = \sup_{t \in [0,T]} W_2(\ga_t^1, \ga_t^2),
\eeq
where the distance $W_2$ is introduced in \eqref{eq:W_2}.
\end{notation}
The class of measure-valued paths considered here is thus the following:
\begin{hypothesis}\label{hyp:ga}
Let $\ck$ be the compact set introduced in Hypothesis~\ref{hyp:ck}. We assume that there exists strictly positive parameters $\ep, L$  such that the path $\ga:[0,T] \mapsto \ck$ lies in the space $\cv^{\ep,L}$ introduced in Notation~\ref{not:ga}.
\end{hypothesis}

\begin{remark}
Note that if $(U,{d})$ is a compact metric space then $(\cp_2(U), W_2)$ is a compact metric space. Therefore when $U$ is compact we can just consider $\ck = \cp_2(U)$ as our state space for the path $\ga$. 
\end{remark}

As a consequence of Hypothesis~\ref{hyp:ck}, we obtain that the space $\cv^{\ep, L}$ of measure-valued H\"older paths is compact as well. We state and prove this technical result for sake of completeness.
\begin{lemma}
Let $\cv^{\ep, L}$ be the space introduced in Notation~\ref{not:ga},  and assume the space of measure-valued controls $\ck$ satisfies Hypothesis~\ref{hyp:ck}.
Then $\cv^{\ep, L}$, equipped with with the topology of uniform convergence, is a compact space.
\end{lemma}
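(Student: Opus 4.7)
The plan is to apply an Arzel\`a--Ascoli type argument, tailored to paths valued in the metric space $(\ck, W_2)$. The space $\cv^{\ep,L}$ is a subset of the continuous paths $C([0,T];\ck)$, and the topology of uniform convergence on this space is induced by the sup-distance $\hat d$ introduced in \eqref{eq:hh-a^2}. Since $\cv^{\ep,L}$ is metrizable, it suffices to prove sequential compactness.

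First I would record the structural facts: by Hypothesis~\ref{hyp:ck}, $(\ck, W_2)$ is a compact metric space; in particular it is complete and totally bounded. Every $\ga \in \cv^{\ep,L}$ is uniformly H\"older with the same constants $\ep, L$, hence the family $\cv^{\ep,L}$ is equicontinuous on $[0,T]$. Moreover, for each fixed $t \in [0,T]$ the slice $\{\ga_t : \ga \in \cv^{\ep,L}\}$ lies in the compact set $\ck$, so it is pointwise relatively compact.

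Next I would invoke the version of Arzel\`a--Ascoli for continuous functions from a compact metric space (here $[0,T]$) into a compact metric space (here $(\ck,W_2)$): a family of such functions is relatively compact in the uniform topology if and only if it is equicontinuous. Applied to our family, this produces, from any sequence $(\ga^n)_{n \geq 1} \subset \cv^{\ep,L}$, a subsequence $(\ga^{n_k})$ and a continuous path $\ga : [0,T] \to \ck$ such that $\hat d(\ga^{n_k}, \ga) \to 0$.

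To conclude, it remains to check that the limit $\ga$ itself belongs to $\cv^{\ep,L}$, i.e.\ that the H\"older bound is preserved. This is the only non-automatic point, but it is immediate from the continuity of $W_2$: for any $s,t \in [0,T]$,
\begin{equation*}
W_2(\ga_s, \ga_t) \leq W_2(\ga_s, \ga^{n_k}_s) + W_2(\ga^{n_k}_s, \ga^{n_k}_t) + W_2(\ga^{n_k}_t, \ga_t) \leq 2 \hat d(\ga^{n_k}, \ga) + L |t-s|^{\ep},
\end{equation*}
and letting $k \to \infty$ yields $W_2(\ga_s, \ga_t) \leq L |t-s|^{\ep}$. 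Since $\ck$ is closed (being compact), we also have $\ga_t \in \ck$ for every $t$. Hence $\ga \in \cv^{\ep,L}$, and the proof is complete. No step presents a genuine obstacle; the only care needed is to verify the Arzel\`a--Ascoli hypotheses in the Wasserstein setting, where compactness of $\ck$ does all the work.
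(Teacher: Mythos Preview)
Your proof is correct and follows essentially the same approach as the paper: both invoke Arzel\`a--Ascoli by verifying equicontinuity (from the uniform H\"older bound), pointwise relative compactness (from compactness of $\ck$), and closedness of $\cv^{\ep,L}$ under uniform limits (via the same triangle-inequality argument you give). The only cosmetic difference is that the paper cites the Arzel\`a--Ascoli theorem from Munkres directly rather than framing it as sequential compactness.
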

\begin{proof}
In light of \cite[Theorem 47.1]{munkres}, we only need to show that $\cv^{\ep, L}$ is equicontinuous, point-wise relatively compact and closed. We will treat those properties separately.

\noindent
\emph{Equicontinuity.} Let $\ga \in \cv^{\ep, L}$, and fix $\delta>0$. Then for all $s, t$ such that $|s-t| < (\frac{\der}{L})^{1/\ep}$, we have
\begin{equation*}
W_2(\ga_s, \ga_t) \leq L |s-t|^{\ep} < \der \, .
\end{equation*}
This proves equicontinuity. 

\noindent
\emph{Point-wise relatively compact.} We need to show that for all $t \in [0,T]$, the subset of $\ck$ defined by $\cv_t^{\ep} = \{ \ga_t: \ga \in \cv^{\ep, L} \}$ is relatively compact. This is immediate since $\ck$ is compact.

\noindent
\emph{Closedness.} Let $\mu$ be a limit point of $\cv^{\ep, L}$. Then for every $n \in \N$ there exists $\mu^n \in \cv^{\ep, L}$ such that $\sup_{t \in [0,T]} W_2(\mu_t, \mu_t^n) < \frac{1}{n}$. Consequently, for every $n \in \N$ it holds that
$$
\dfrac{W_2(\mu_t, \mu_s)}{|s-t|^{\ep}} \leq \dfrac{W_2(\mu_t, \mu_t^n) + W_2(\mu_t^n, \mu_s^n) + W_2(\mu_s^n, \mu_s)}{|s-t|^{\ep}} \leq \dfrac{2}{n |s-t|^{\ep}} + L,
$$
where we use the fact that $\mu^n \in \cv^{\ep, L}$ implies $W_2(\mu_t^n, \mu_s^n) \leq L|s-t|^{\ep}$. Taking limit over $n$ yields $W_2(\mu_t, \mu_s) \leq L |s-t|^{\ep}$. We have thus proved that $\mu \in \cv^{\ep, L}$. 
\end{proof}

\begin{comment}

{We also need the following regularity assumptions on the coefficients $b$ and $\si$ in order to solve equation~\eqref{eq:rde-2}. 
\begin{hypothesis}\label{hyp:b}
Let $\ep>0$ be as in Hypothesis~\ref{hyp:ga}, and assume $b:[0,T]\times \R^m \times U \mapsto \R^m$ is bounded and satisfies
$$
\lln b(t, x, m_1) - b(s, y, m_2) \rrn \leq c_b \lp |t-s|^{\ep} + |x-y| + W_2(m_1, m_2) \rp.
$$ 
\hp{Can we assume that $b \in \cac_b^{\ep, 1, 1}$ ?}
\end{hypothesis}
\end{comment}
We now obtain an existence-uniqueness result for our equations involving measures. To this aim, let us label a new set of assumptions on our coefficients.
\begin{hypothesis}\label{hyp:rde-2-b,si}
In equation \eqref{eq:rde-2}, we assume that the coefficients $b$ and $\si$  are such that 
$$
b \in \cac_b^{0, 1,1}([0,T]\times \R^m \times U; \R^m), \quad \text{and} \quad \si \in \cac_b^{1, 3}([0,T] \times \R^m; \R^{m,d}).
$$
Furthermore $b(u,x,a)$ is uniformly continuous in $u$. 
\end{hypothesis}
Our existence-uniqueness result for coefficients depending on measures is obtained below as a corollary of Proposition~\ref{prop:exi+uni}.
\begin{corollary}\label{cor:eq-with-gamma}
Let $\al > \frac{1}{3}$ and suppose $\zeta \in \cac^{\al}([0,T]; \R^d)$ satisfying Hypothesis~\ref{hyp:zeta}. Assume $\ga$ satisfies Hypothesis~\ref{hyp:ga}, and that $b, \si$ fulfill Hypothesis~\ref{hyp:rde-2-b,si}. 
Then for every $\ka \in (\frac{1}{3}, \al)$ there exists an unique $x \in \cq_{\bzeta}^{\ka}([0,T]; \R^m)$ which solves \eqref{eq:rde-2}.
%If in addition $\si \in C_b^{1,3}([0,T]\times \R^m; \R^{m,d})$, the solution $x$ is also unique.
\end{corollary}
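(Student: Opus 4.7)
The plan is to reduce the corollary to a direct application of Proposition~\ref{prop:exi+uni} by introducing the averaged drift
\[
B(s,x) \;:=\; \int_U b(s,x,a)\,\ga_s(da), \qquad (s,x)\in [0,T]\times\R^m ,
\]
so that equation~\eqref{eq:rde-2} rewrites as $dx_s = B(s,x_s)\,ds + \si(s,x_s)\,d\zeta_s$, i.e. exactly the equation \eqref{eq:de} with drift $B$ in place of $b$. The task is therefore to verify that $B$ satisfies Hypothesis~\ref{hyp:de-b,si}, namely $B\in \cac_b^{0,1}([0,T]\times\R^m;\R^m)$, and then to invoke Proposition~\ref{prop:exi+uni} under the already assumed regularity $\si\in\cac_b^{1,3}$ to get both existence and uniqueness in $\cq_{\bzeta}^{\ka}([0,T];\R^m)$.

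First I would check the space regularity of $B$. Since Hypothesis~\ref{hyp:rde-2-b,si} asserts that $b\in\cac_b^{0,1,1}$, the map $x\mapsto b(s,x,a)$ is $C^1$ with derivative bounded uniformly in $(s,a)$. Dominated convergence then immediately gives $B(s,\cdot)\in C^1(\R^m;\R^m)$ with $\nabla_x B(s,x) = \int_U \nabla_x b(s,x,a)\,\ga_s(da)$, and both $\|B\|_\infty$ and $\|\nabla_x B\|_\infty$ are bounded by the corresponding norms of $b$.

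The main point, and the step I expect to be the only non-routine one, is continuity of $B$ in the time variable. I would split
\[
B(t,x) - B(s,x) \;=\; \int_U \bigl[b(t,x,a)-b(s,x,a)\bigr]\,\ga_t(da) \;+\; \int_U b(s,x,a)\,\bigl(\ga_t-\ga_s\bigr)(da).
\]
The first term tends to zero as $t\to s$ uniformly in $x$, thanks to the uniform continuity of $u\mapsto b(u,x,a)$ assumed in Hypothesis~\ref{hyp:rde-2-b,si}. For the second term, the Lipschitz dependence of $b(s,x,\cdot)$ on $a$ (again part of Hypothesis~\ref{hyp:rde-2-b,si}) combined with the Kantorovich--Rubinstein duality yields
\[
\Bigl|\int_U b(s,x,a)\,(\ga_t-\ga_s)(da)\Bigr| \;\leq\; c_b\, W_1(\ga_t,\ga_s) \;\leq\; c_b\, W_2(\ga_t,\ga_s) \;\leq\; c_b L\,|t-s|^{\ep},
\]
where the last inequality uses Hypothesis~\ref{hyp:ga}, i.e.\ $\ga\in\cv^{\ep,L}$. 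Thus $B(\cdot,x)$ is in fact $\ep$-H\"older continuous in $s$, uniformly in $x$, which is more than enough to conclude $B\in\cac_b^{0,1}([0,T]\times\R^m;\R^m)$.

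With Hypothesis~\ref{hyp:de-b,si} verified for the pair $(B,\si)$, Proposition~\ref{prop:exi+uni} applies verbatim and produces, for every $\ka\in(\tfrac13,\al)$, a unique weakly controlled solution $x\in\cq_{\bzeta}^{\ka}([0,T];\R^m)$ to
\[
\der x_{st} \;=\; \int_s^t B(u,x_u)\,du \;+\; \int_s^t \si(u,x_u)\,d\zeta_u ,
\]
with the rough integral interpreted as in Proposition~\ref{prop:integral_as_weak}. By construction of $B$ this is exactly equation~\eqref{eq:rde-2}, which completes the proof.
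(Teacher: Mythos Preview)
Your proof is correct and follows essentially the same approach as the paper: define the averaged drift, verify it lies in $\cac_b^{0,1}$ via the same two-term splitting for time continuity (using Kantorovich--Rubinstein duality and $W_1\le W_2$ together with Hypothesis~\ref{hyp:ga} for the measure term, and uniform continuity of $b$ in $u$ for the other), and then apply Proposition~\ref{prop:exi+uni}. The only cosmetic difference is that you are slightly more explicit about the dominated convergence argument for the $x$-differentiability, whereas the paper simply remarks that boundedness of $\partial b/\partial x$ suffices.
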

\begin{proof}
Define $\hat{b}(u, x) = \int b(u, x, a) \ga_u(da)$. 
Let us first check that $\hat{b}$ fulfills one of the conditions in Hypothesis~\ref{hyp:de-b,si}, namely $\hat{b} \in \cac_b^{0,1}$.
% In fact, we will show that
%\beq\label{eq:hatb-k}
%\lln \hat{b} (u_2, x) - \hat{b} (u_1, x) \rrn \lesssim |u_2 - u_1|^{\ep},
%\eeq
%uniformly in $u_1, u_2 \in [0,T]$ and $x \in \R^m$. 
In order to show that $\hat{b}$ is continuous in $u$, we  use the following decomposition
\beq\label{eq:hatb-l}
\hat{b} (u_2, x) - \hat{b} (u_1, x) = \ca_{u_1 u_2}^1(x) + \ca_{u_1 u_2}^2(x),
\eeq
with
\begin{align}\label{eq:hatb-n}
\ca_{u_1 u_2}^1(x) &= \int_U b(u_1, x, a) [\ga_{u_2} - \ga_{u_1}](da), \nonumber \\
\ca_{u_1 u_2}^2(x) &= \int_U [b(u_2, x, a) - b(u_1, x, a)] \ga_{u_2}(da).
\end{align}
We now proceed to bound $\ca^1$ and $\ca^2$ above separately. For the term $\ca^1$ we write
$$
\ca_{u_1 u_2}^1(x) = \int_U b(u_1, x, a) \ga_{u_2}(da) - \int_U b(u_1, x, a) \ga_{u_1}(da).
$$
Furthermore we have assumed $a \mapsto b(u,x,a)$ to be a $\cac^1$-function uniformly in $(u,x) \in [0,T] \times \R^m$. Hence denoting by $\text{Lip}_1(U)$ the set of Lipschitz functions on $U$ with Lipschitz constant upper bounded by $1$, we have
$$
\lln \ca_{u_1 u_2}^1(x) \rrn \leq c_b \sup \lcl \int f(a) \ga_{u_2}(da) - \int f(a) \ga_{u_1}(da); f \in \text{Lip}_1(U) \rcl = c_b W_1(\ga_{u_1}, \ga_{u_2}),
$$
where we have resorted to the dual representation of the $1$-Wasserstein distance for the second identity. In addition, since $\ga_{u_1}$, $\ga_{u_2}$ are both probability measures, it is a well-known fact that the $W_1$-distance is dominated by the $W_2$-distance. Thanks to our Hypothesis~\ref{hyp:ga}, we thus obtain
\beq\label{eq:hatb-m}
\lln \ca_{u_1 u_2}^1(x) \rrn \leq c_b W_2(\ga_{u_1}, \ga_{u_2}) \leq c_{b,L} {|u_2 - u_1|}^{\ep} \to 0, \text{ as } u_2 \to u_1.
\eeq
The term $\ca_{u_1 u_2}^2(x)$ in \eqref{eq:hatb-n} is easily treated. Indeed, since $\ga_{u_2}$ is a probability measure, we have
\beq\label{eq:hatb-o}
\lln \ca_{u_1 u_2}^2(x) \rrn \leq \sup_{a \in U} \lln b(u_2, x, a) - b(u_1, x, a) \rrn \to 0, \text{ as } u_2 \to u_1,
\eeq
where the convergence is a consequence of Hypothesis~\ref{hyp:rde-2-b,si}. We now plug \eqref{eq:hatb-m} and \eqref{eq:hatb-o} into \eqref{eq:hatb-l}. We end up with the relation
$$
\lln \hat{b}(u_1, x) - \hat{b}(u_2, x) \rrn \to 0, \text{ as } u_2 \to u_1.
$$
%Then it is readily checked that $| \hat{b}(u_1, x) -\hat{b}(u_2,x) | \lesssim |u_1 - u_2|^{\ep}$. 
Furthermore due to boundedness of $\frac{\partial b}{\partial x}$, $\hat{b}$ is differentiable in $x$. Thus we find that $\hat{b}$ satisfies the conditions in Proposition~\ref{prop:exi+uni}, and existence and uniqueness of \eqref{eq:rde-2} follows.
\end{proof}

\begin{comment}
\begin{remark}
\sout{As mentioned in Remark~\ref{rmk:b-irregular}, we have proved Proposition~\ref{prop:exi+uni} under H\"older assumptions for $t\mapsto b(t,x,u)$ due to our reinforcement learning setting. Namely as seen in Corollary~\ref{cor:eq-with-gamma}, the dependence in $t$ for $b(t,x,u)$ is determined by $t\mapsto \ga_{t}$. The latter dynamics is governed by the rough path $\zeta$, which is H\"older continuous only.}
\end{remark}
\end{comment}

We close this section by stating a series of continuity properties which will be an important technical step in the sequel.

\begin{proposition}\label{prop:cont-ga}
	We work under the conditions of Proposition~\ref{prop:exi+uni}. Consider 3 initial conditions $y, y^1,y^2 \in \mathbb R^m$ and 3 measure-valued controls $\gamma,\gamma^1,\gamma^2 \in \cv^{\ep,L}$. We denote by $x^{y_i}$ (resp. $x^{\gamma_i}$) the solution of eq.~\eqref{eq:rde-2} driven by the control $\gamma$ (resp. with initial condition $y$). Then, we have
	\beq\label{eq:hh-b}
	 \|x^{y_2} - x^{y_1}\|_{\infty} + \cn[x^{y_2}-x^{y_1}; Q_\zeta^\alpha([0,T];\mathbb R^m)] \leq C_{\zeta,\sigma,b,\gamma}(1+|y_1|+|y_2|) |y_2-y_1|,
	\eeq
	and 
	\beq\label{eq:hh-c}
	 \|x^{\gamma_2} - x^{\gamma_1}\|_{\infty} + \cn[x^{\gamma_2}-x^{\gamma_1}; Q_\zeta^\kappa([0,T];\mathbb R^m)] \leq (1+|y|) \hat C_{\zeta,\sigma,b,y} \hat d(\gamma^1,\gamma^2),
	\eeq
	where the distance $\hat d$ is defined in \eqref{eq:hh-a^2} above.
\end{proposition}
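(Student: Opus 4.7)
The plan is to reduce both estimates to the contraction-type analysis developed in the uniqueness part of Proposition~\ref{prop:exi+uni}, now applied to the difference $\Delta x := x^{(2)} - x^{(1)}$ of two solutions to equation~\eqref{eq:rde-2}. In both cases, $\Delta x$ inherits a weakly-controlled-path decomposition with Gubinelli derivative $(\Delta x)^\zeta_s = \sigma(s, x^{(2)}_s) - \sigma(s, x^{(1)}_s)$, and the rough integral formulation of \eqref{eq:rde-2} yields a linearized identity for $\Delta x$ to which Proposition~\ref{prop:integral_as_weak} and the bound \eqref{eq:6-b} can be applied.

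For \eqref{eq:hh-b}, fix the control $\gamma \in \cv^{\ep,L}$ and set $\hat b(s,x) = \int_U b(s,x,a) \gamma_s(da)$, which belongs to $\cac_b^{0,1}$ by Corollary~\ref{cor:eq-with-gamma}. The difference $\Delta x = x^{y_2} - x^{y_1}$ satisfies $\Delta x_0 = y_2 - y_1$ and
\begin{equation*}
\der(\Delta x)_{st} = \int_s^t [\hat b(u, x^{y_2}_u) - \hat b(u, x^{y_1}_u)]\,du + \int_s^t [\sigma(u, x^{y_2}_u) - \sigma(u, x^{y_1}_u)]\,d\zeta_u.
\end{equation*}
Applying Proposition~\ref{prop:integral_as_weak} on a small interval $[0,\tau]$, and following the strategy of steps (ii)-(iii) in the proof of Proposition~\ref{prop:exi+uni}, I would obtain an estimate of the form
\begin{equation*}
\cn[\Delta x; \cq_\zeta^\kappa([0,\tau]; \R^m)] \leq C_\zeta |y_2 - y_1| + C_{\sigma,b,\zeta}\bigl(1+|y_1|+|y_2|\bigr) \tau^\nu \cn[\Delta x; \cq_\zeta^\kappa([0,\tau]; \R^m)],
\end{equation*}
for some $\nu>0$, where the factor $1+|y_1|+|y_2|$ enters through a priori bounds on $\cn[x^{y_i}; \cq_\zeta^\kappa]$ deduced from \eqref{eq:6-b}. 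Choosing $\tau$ small enough to absorb the last term and patching finitely many such intervals yields \eqref{eq:hh-b}, once the sup-norm contribution from $\Delta x_0 = y_2 - y_1$ is incorporated.

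For \eqref{eq:hh-c}, fix the initial condition $y$ and set $\hat b^i(s,x) = \int_U b(s,x,a)\gamma^i_s(da)$. Then $\Delta x = x^{\gamma^2} - x^{\gamma^1}$ satisfies $\Delta x_0 = 0$ and a similar integral identity, with an additional inhomogeneous forcing coming from the decomposition
\begin{equation*}
\hat b^2(u, x^{\gamma^2}_u) - \hat b^1(u, x^{\gamma^1}_u) = \bigl[\hat b^2(u, x^{\gamma^2}_u) - \hat b^2(u, x^{\gamma^1}_u)\bigr] + \bigl[\hat b^2(u, x^{\gamma^1}_u) - \hat b^1(u, x^{\gamma^1}_u)\bigr].
\end{equation*}
The first bracket is controlled by Lipschitzness of $\hat b^2$ in the spatial variable, exactly as in the previous bound. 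The second bracket is handled via the Kantorovich dual representation of the $1$-Wasserstein distance already invoked in Corollary~\ref{cor:eq-with-gamma}: using $W_1 \leq W_2$ and the Lipschitzness of $a\mapsto b(u,x,a)$ from Hypothesis~\ref{hyp:rde-2-b,si},
\begin{equation*}
\bigl|\hat b^2(u, x^{\gamma^1}_u) - \hat b^1(u, x^{\gamma^1}_u)\bigr| \leq c_b\, W_2(\gamma^1_u, \gamma^2_u) \leq c_b\, \hat d(\gamma^1,\gamma^2).
\end{equation*}
The same local-interval contraction then yields \eqref{eq:hh-c}, with the factor $(1+|y|)$ entering through the a priori size of $x^{\gamma^i}$ delivered by \eqref{eq:6-b}.

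The main obstacle lies not in the drift estimates---which reduce to elementary Lipschitz and Wasserstein considerations---but in bounding $\cn[\sigma(\cdot, x^{(2)}) - \sigma(\cdot, x^{(1)}); \cq_\zeta^\kappa]$ in terms of $\cn[\Delta x; \cq_\zeta^\kappa]$ with sufficient control over the rough remainder. This requires a careful second-order Taylor expansion of $\sigma$ around both paths (analogous to the computation sketched in step (ii) of the proof of Proposition~\ref{prop:exi+uni}), tracking simultaneously the discrepancy between the paths, between their Gubinelli derivatives, and between their rough remainders, and makes essential use of the $\cac_b^{1,3}$ regularity of $\sigma$. Once this stability estimate is established, the contraction argument closes and both \eqref{eq:hh-b} and \eqref{eq:hh-c} follow.
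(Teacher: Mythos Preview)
Your proposal is correct and follows precisely the approach the paper itself identifies: the authors state that ``a complete proof of proposition would be a long and tedious elaboration of Proposition~\ref{prop:exi+uni}'' and omit the details, so your contraction-type analysis of $\Delta x$ with the drift decomposition and Wasserstein bound is exactly the intended argument. The only difference is that for~\eqref{eq:hh-c} the paper, rather than carrying out the rough-path stability estimate for $\sigma(\cdot,x^{(2)})-\sigma(\cdot,x^{(1)})$ in $\cq_\zeta^\kappa$, shortcuts by observing $|\hat b^1(t,x)-\hat b^2(t,x)|\lesssim W_2(\gamma_t^1,\gamma_t^2)$ and then invoking \cite[Theorem~4.8]{friz-hoquet-le} (a general continuity result for rough differential equations with drift) specialized to vanishing diffusion perturbation; your hands-on route and the citation lead to the same conclusion.
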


\begin{proof}
	A complete proof of proposition would be a long and tedious elaboration of Proposition~\ref{prop:exi+uni}. We will, therefore, omit those details for conciseness. Let us just briefly justify the less standard inequality (c) above. To this email, define $\hat b^i(t,x):= \int_U b(t,x,a) \ga_t^i(da)$ for $i=1,2$. It is readily checked that
$$
|\hat{b}^1(t,x) - \hat{b}^2(t,x)| \lesssim W_2 (\ga_t^1, \ga_t^2).
$$
Hence one can deduce (c) from \cite[Theorem 4.8]{friz-hoquet-le} specialized to a vanishing diffusion coefficient.
\end{proof}

\begin{comment}
\begin{proposition}\label{prop:cont-ga}
For measure-valued controls $\ga^1$ and $\ga^2$ lying in $\cv^{\ep, L}$, let $x^1$ and $x^2$ be the corresponding solutions  to \eqref{eq:rde-2}. Then we have
$$
{\|x^1 - x^2\|}_{\infty} + \cn(x^1 - x^2; \cq_{\bzeta}^{\ka}([0,T]; \R^m)) \lesssim \hat{d}(\ga^1, \ga^2).
$$
\end{proposition}

\begin{proof}
%\hr{cannot directly apply friz-victoir since coefficients depend on time. \\}
Define $\hat{b}^i(t,x) := \int_U b(t,x,a) \ga_t^i(da)$ for $i=1,2$. It is readily checked that
$$
|\hat{b}^1(t,x) - \hat{b}^2(t,x)| \lesssim W_2 (\ga_t^1, \ga_t^2).
$$
The result now follows from \cite[Theorem 4.8]{friz-hoquet-le}. %\hb{Is \cite[Theorem 4.8]{friz-hoquet-le} proved for $t\mapsto \hat{b}(t,x)$ H\"older continuous?} \hp{Prakash: They prove generally for (random) continuous bounded functions. Did you notice a problem in applying the theorem?}
\end{proof}

Consider $\ga^1$ and $\ga^2$ with corresponding solutions $x^1$ and $x^2$. We then have
\begin{multline*}
\der (x^1 - x^2)_{st} = \lp \si(s, x_s^1) - \si(s, x_s^2) \rp \der \bzeta_{st} + \int_{s}^t \lp \hat{b}^1(u, x_u^1) - \hat{b}^2(u, x_u^2) \rp du \\
+ \int_s^t \lp \lp \si(u, x_u^1) - \si(u, x_u^2)  \rp - \lp \si(s, x_s^1) - \si(s, x_s^2) \rp \rp d\bzeta_s,
\end{multline*}
where $\hat{b}^i(u, x) = \int_U b(u, x, a) \ga_u^i (da)$. It is readily checked that
$$
\lln \hat{b}^1(u, x_u^1) - \hat{b}^2(u, x_u^2) \rrn \leq |x_u^1 - x_u^2| + d(\ga_u^1, \ga_u^2).
$$
Then using the notation $\cn_t(x)$ to stand for $\cn[x; \cq_{\bzeta}^{\ka}([0,t]; \R^m)]$ we have
\begin{align*}
\cn_t(x) &\leq C {\|\bzeta\|}_{\ka} + C.t^{1-\ka} + C {\|\bzeta\|}_{\ka} \\
&+ \cn_t(x) 
\end{align*}
\end{comment}

\section{A HJB type equation}\label{sec:hjb}

In this section we derive a HJB equation for the value of an optimization problem in a rough environment. We start by introducing our problem setting and deriving some basic properties for the equations we are handling.

\subsection{Pathwise control setting}
In the pathwise optimal control context, one considers an optimiztion problem of the form 
\beq\label{eq:opt}
\sup \left\{J_T(\ga) \equiv \int_0^T  F(s, x_s^{\ga}, \ga_s) ds + {G(x_T^{\ga})};~\ga\in \cv^{\ep,L} \right\},
\eeq
where $F$ and $G$ satisfy Hypothesis~\ref{hyp:F,G} below, subject to a system like \eqref{eq:rde-2} controlled by a measure valued path $\ga$. %\hr{Do we need to remove the highlighted term above to have continuity of $\Gamma$ in Prop~\ref{prop:sup}? Can we make $\cn (x ; \cq_{\zeta}^{\ka})$ a norm by modifying the Holder norm of $x$?}
\begin{hypothesis}\label{hyp:F,G}
$F:[0,T]\times \R^m \times \ck \mapsto \R$ is  bounded, Lipschitz continuous in $x$, uniformly over $\ga\in \ck$, and Lipschitz in $\ga \in \ck$ (where we recall from Notation~\ref{not:P_2} that $\ck \subset \cp_2(U)$ is equipped with the $W_2$-distance).   The function $G: \R^m \mapsto \R$ is bounded and continuous.
\end{hypothesis}

\begin{remark}\label{rem:entropy}
Suppose $U \subseteq \R^u$ for $u \geq 1$. Let $\ck \subset \cp_2(U)$ be a space of absolutely continuous measures with uniformly bounded second moments such that the density function $\dot{m}(u)$ for every $m \in \ck$ satisfies
$$
\|\nabla \log \dot{m} (u)\| \leq c_1 \|u\| + c_2,
$$
for some $c_1 >0$, $c_2 \geq 0$. 
Denote by $h(m)$ the differential entropy of a measure $m \in \ck$:
$$
h(m) = -\int_U {m}(u) \log {m}(u) du.
$$
Then by \cite[Proposition~1]{polyanski-wu}, the function $h$ satisfies
\beq\label{eq:entropy<W2}
\lln h({m}_1) - h({m}_2) \rrn \lesssim W_2(m_1, m_2).
\eeq
Otherwise stated, the function $h$ above is compatible with Hypothesis~\ref{hyp:F,G}.
\end{remark}

\begin{remark}
Let us give an important example of function $F$ in an exploratory setting, as introduced in \cite{tang-zhang-zhou, wang-zari-zhou}. Namely consider parameters $(s, x, \ga) \in [0,T] \times \R^m \times \ck$ as in Hypothesis~\ref{hyp:F,G} and Remark~\ref{rem:entropy}. Also pick a function $r: \R^m \times \ck \mapsto \R$ which is bounded, continuous in $x$ uniformly over $U$, as well as Lipschitz in $U$. Then $F$ is defined by
\beq\label{eq:a-F}
F(s,x,\ga) = \lp \int_U r(x, u) \dot\ga_s(u) du - \la \int_U \dot\ga_s(u)\log \dot\ga_s(u) du \rp .
\eeq
It is a direct consequence of \eqref{eq:entropy<W2} that F satisfies Hypothesis~\ref{hyp:F,G}. %Here $\ga \in \ck$ where $\ck$ is as in Remark~\ref{rem:entropy}, while $r:\R^m \times \ck \to \R$ is bounded, continuous in $x$ uniformly over $u$, and Lipschitz in $u$.}%We might have to change the definition of $\cv^{\ep, L}$ and our metric for that. In fact $F$ in~\eqref{eq:a-F} is not even of the form~\eqref{eq:opt}, due to the entropy term. 
\end{remark}

%In order to solve (at least theoretically) our optimization problem, it will be convenient to deal with a compact set of controls. We state an assumption in this sense below.

%\begin{hypothesis}\label{hyp:U}
%\sout{The control space $U$ is a compact metric space.}
%\end{hypothesis}

An essential step in our methodology is to prove that the optimization problem~\eqref{eq:opt} can be solved. The main proposition in this direction is the following.
 
\begin{proposition}\label{prop:sup}
Let $F,G$ be functions satisfying Hypothesis~\ref{hyp:F,G}. Assume that Hypothesis~\ref{hyp:rde-2-b,si} is fulfilled so that Corollary~\ref{cor:eq-with-gamma} holds. The space $\ck$ is assumed to verify Hypothesis~\ref{hyp:ck}. We call $V_0(y)$ the supremum in \eqref{eq:opt}, that is
\begin{equation}\label{eq:j-a}
V_0(y) = \sup \{ J_T(\ga) ; \ga \in \cv^{\ep,L} \},
\end{equation}
where the dependence on $y \in \mathbb R^m$ comes from the initial condition in~\eqref{eq:rde-2} and where we recall that $\cv^{\ep,L}$ is introduced in Notation~\ref{not:ga}. 
Then, the supremum in \eqref{eq:j-a} is attained by an optimal $\ga^{\ast} \in \cv^{\ep, L}$. 
\end{proposition}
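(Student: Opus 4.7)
The plan is to apply the classical direct method of the calculus of variations: we combine the compactness of $\cv^{\ep,L}$ (established in the lemma preceding the statement) with the continuity of $J_T$ on $\cv^{\ep,L}$ equipped with the topology of uniform convergence induced by the distance $\hat d$ in~\eqref{eq:hh-a^2}. Since $F$ and $G$ are bounded, the value $V_0(y)$ is finite, so we can select a maximizing sequence $(\ga^n)_{n\ge1}\subset \cv^{\ep,L}$ with $J_T(\ga^n)\to V_0(y)$.

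First I would invoke the compactness lemma just proved: $\cv^{\ep,L}$ is compact with respect to uniform convergence, so up to extraction we may assume that $\ga^n\to\ga^\ast$ uniformly on $[0,T]$, in the $W_2$-sense, for some $\ga^\ast\in\cv^{\ep,L}$. In particular $\hat d(\ga^n,\ga^\ast)\to 0$.

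Next I would transfer this convergence to the state trajectories via the continuity estimate~\eqref{eq:hh-c} from Proposition~\ref{prop:cont-ga}. Denoting by $x^n$ and $x^\ast$ the solutions of~\eqref{eq:rde-2} driven by $\ga^n$ and $\ga^\ast$ respectively (both starting from the common initial condition $y$), one has
\begin{equation*}
\|x^n - x^\ast\|_\infty \le (1+|y|)\,\hat C_{\zeta,\sigma,b,y}\,\hat d(\ga^n,\ga^\ast)\longrightarrow 0,
\end{equation*}
so $x^n\to x^\ast$ uniformly on $[0,T]$, and in particular $x^n_T\to x^\ast_T$ and $x^n_s\to x^\ast_s$ pointwise.

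It then remains to pass to the limit in $J_T(\ga^n)=\int_0^T F(s,x_s^n,\ga_s^n)\,ds + G(x_T^n)$. The terminal contribution handles itself thanks to continuity and boundedness of $G$. For the integral term, I would use Hypothesis~\ref{hyp:F,G}: the Lipschitz property in $x$ (uniformly over $\ga\in\ck$) together with the Lipschitz property in $\ga$ yields
\begin{equation*}
|F(s,x_s^n,\ga_s^n)-F(s,x_s^\ast,\ga_s^\ast)|\le c\bigl(\|x^n-x^\ast\|_\infty + W_2(\ga_s^n,\ga_s^\ast)\bigr)\le c\bigl(\|x^n-x^\ast\|_\infty + \hat d(\ga^n,\ga^\ast)\bigr),
\end{equation*}
which goes to $0$ uniformly in $s$, so $\int_0^T F(s,x_s^n,\ga_s^n)\,ds\to\int_0^T F(s,x_s^\ast,\ga_s^\ast)\,ds$. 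Hence $J_T(\ga^n)\to J_T(\ga^\ast)$ and $J_T(\ga^\ast)=V_0(y)$, completing the proof.

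I do not expect genuine difficulties here: the compactness and continuity ingredients have essentially been packaged in the preceding results. The only mildly delicate point is making sure that the Lipschitz hypothesis on $F$ in its measure argument is invoked in the $W_2$-metric consistent with the space $\ck$ used to define $\cv^{\ep,L}$; this is precisely what Hypothesis~\ref{hyp:F,G} supplies.
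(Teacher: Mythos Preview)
Your proposal is correct and follows essentially the same approach as the paper: both arguments combine compactness of $\cv^{\ep,L}$ with continuity of $J_T$, using Proposition~\ref{prop:cont-ga} to pass convergence of controls to uniform convergence of trajectories, and then the Lipschitz properties of $F$ in $x$ and $\ga$ from Hypothesis~\ref{hyp:F,G} to handle the running cost. The only cosmetic difference is that the paper proves continuity of $\Gamma=J_T$ for an arbitrary convergent sequence and then invokes compactness, whereas you extract a maximizing sequence first; the paper also splits the integral estimate into two terms $I_1^n$ and $I_2^n$ where you combine them, but the ingredients are identical.
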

\begin{proof} Let $x^\gamma$ be the solution to~\eqref{eq:rde-2}. 
We define the map 
$$
\Gamma(\ga) = \int_0^T  F(s, x_s^{\ga}, \ga_s) ds +{G(x_T^{\ga})}.
$$
We will first show continuity of $\Gamma$ and our result will follow from the compactness of $\cv^{\ep, L}$. In order to prove continuity of $\gga$, consider a sequence of measures $\ga^n \in \cv^{\ep, L}$ converging to $\ga \in \cv^{\ep,L}$. Then we have the following decomposition:
\begin{equation*}
\int_0^T F(s, x_s^{\ga^n},\ga_s^n)ds - \int_0^T  F(s, x_s^{\ga}, \ga_s)ds
=
 I_1^n + I_2^n \, ,
\end{equation*}
where we have set
\begin{eqnarray*}
I_1^n&=&
\int_0^T \lc F(s, x_s^{\ga^n}, \ga_s^n) -F(s, x_s^{\ga}, \ga_s^n) \rc ds\\
I_2^n&=&
\int_0^T \lc F(s, x_s^{\ga}, \ga_s^n)  - F(s, x_s^{\ga}, \ga_s) \rc ds \, .
\end{eqnarray*}
Moreover,
using Proposition~\ref{prop:cont-ga} and Hypothesis~\ref{hyp:F,G} we have
$$
\lim_{n\to\infty} \|I_1^n\| \lesssim 
\lim_{n\to\infty} \int_0^T\|F(s, x_s^{\ga^n}, \cdot) - F(s, x_s^\ga, \cdot)\|_{\infty}\, ds
= 0 \, .
$$
In addition, due to Lipschitz continuity of $F$ in $\ga$ we discover that
$$
\lim_{n\to\infty} \| I_2^n \| 
\lesssim \hat{d}(\ga^n, \ga) = 0,
$$ 
Finally, due to Proposition~\ref{prop:cont-ga} we obtain that $G(x_T^{\ga^n})$ converges to $G(x_T^{\ga})$ by continuity of $G$. Gathering our 3 estimates, this completes the proof.
\end{proof}

\subsection{Heuristic derivation of a HJB equation}\label{sec:dpp}
In this section we will derive the HJB equation for the value related to our optimization problem~\eqref{eq:opt}. {Compared to \cite{diehl}, in our setting we have chosen an explicit approach. That is, we derive the HJB equation thanks to a direct application of rough path techniques.} 

We begin by introducing some notation. First, we generalize \eqref{eq:j-a} and set 
\beq\label{eq:hjb-a}
V(s, y) := \sup \lcl J_{sT}(\ga, y); \, \ga \in \cv^{\ep,L} ([s,T]) \rcl,
\eeq
where
\beq\label{eq:hjb-e}
J_{sT} (\ga, y) := \int_s^T F(r, x_r^{\ga}, \ga_r) \,  dr + G(x_T^{\ga}),
\eeq
$x_s^\gamma = y$ and where the set $\cv^{\ep, L}([s,t])$ is defined by
\beq\label{eq:hjb-b}
\cv^{\ep, L} ([s,T]) := \lcl \cac^{\ep}([s,T]; \ck); W_2(\ga_r, \ga_t) \leq L |t-r|^{\ep} \text{ for all } r,t \in [s,T]  \rcl.
\eeq
We also label a notation for our dynamics $x$ for further use.
\begin{notation}\label{not:hjb-c}
Consider a time $s \in [0,T]$ and an initial condition $y \in \R^m$. We call $x^{s, y, \ga} = \{ x_t^{s, y, \ga} ; t \in [s,T]\}$ the solution of the following rough differential equation (written in its integral representation),
\beq\label{eq:hh-a}
 x_t^{s, y, \ga} = y + \int_s^t \int_U b(r, x_r^{s, y, \ga}, a) \ga_r(da)dr + \int_s^t \si(r, x_r^{s, y, \ga}) d \bzeta_r \, .
\eeq
\end{notation}
In this context, the dynamic programming principle can be stated as follows.

\begin{proposition}\label{prop:dpp}
Under the conditions of Corollary~\ref{cor:eq-with-gamma}, let $\{ x^{s, y, \ga}; s \in [0,T], y \in \R^m \}$ be the family introduced in Notation~\ref{not:hjb-c}. Also recall that the value $V$ is defined by \eqref{eq:hjb-a}-\eqref{eq:hjb-e}, where $F,G$ satisfy Hypothesis~\ref{hyp:F,G}. Then the following relation holds true for $0 \leq s_1 \leq s_2 \leq T$:
\beq\label{eq:hjb-d}
V(s_1,y) = \sup \lcl \int_{s_1}^{s_2} F(r, x_r^{s_1, y, \ga}, \ga_r) dr + V(s_2, x_{s_2}^{s_1, y, \ga}); \ga \in \cv^{\ep, L}([s_1, T]) \rcl.
\eeq
\end{proposition}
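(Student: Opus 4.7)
The plan is the standard two-inequality proof: establish $V(s_1,y) \leq \mathcal{R}$ and $V(s_1,y) \geq \mathcal{R}$ separately, where $\mathcal{R}$ denotes the right-hand side of \eqref{eq:hjb-d}.

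For the $\leq$ direction, I would fix an arbitrary $\ga \in \cv^{\ep,L}([s_1,T])$ and invoke the flow property of the RDE~\eqref{eq:hh-a}, which is an immediate consequence of the uniqueness part of Corollary~\ref{cor:eq-with-gamma}: namely $x^{s_1,y,\ga}_t = x^{s_2,\, x_{s_2}^{s_1,y,\ga},\, \ga}_t$ for all $t \in [s_2,T]$. Combined with additivity of the integral, this decomposes the cost as
\[
J_{s_1T}(\ga,y) = \int_{s_1}^{s_2} F(r, x_r^{s_1,y,\ga}, \ga_r)\, dr + J_{s_2T}\bigl(\ga|_{[s_2,T]},\, x_{s_2}^{s_1,y,\ga}\bigr).
\]
Since the restriction $\ga|_{[s_2,T]}$ manifestly lies in $\cv^{\ep,L}([s_2,T])$, the second summand is bounded above by $V(s_2, x_{s_2}^{s_1,y,\ga})$; taking the supremum over $\ga$ closes this direction.

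For the $\geq$ direction, the plan is as follows. Given an arbitrary $\ga \in \cv^{\ep,L}([s_1,T])$, set $y' := x_{s_2}^{s_1,y,\ga}$ and apply Proposition~\ref{prop:sup} on the interval $[s_2,T]$ to obtain a maximizer $\tilde\ga^\ast \in \cv^{\ep,L}([s_2,T])$ achieving $V(s_2,y') = J_{s_2T}(\tilde\ga^\ast, y')$. I would then produce a competitor $\hat\ga \in \cv^{\ep,L}([s_1,T])$ for $V(s_1,y)$ by concatenating $\ga|_{[s_1,s_2]}$ with $\tilde\ga^\ast$, so that the flow property and additivity yield $J_{s_1T}(\hat\ga,y) = \int_{s_1}^{s_2} F(r, x_r^\ga, \ga_r)\, dr + V(s_2,y')$. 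The bound $J_{s_1T}(\hat\ga,y) \leq V(s_1,y)$ together with a supremum in $\ga$ would then complete the inequality.

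The principal obstacle is that the raw concatenation $\hat\ga$ typically escapes $\cv^{\ep,L}([s_1,T])$: in general $\ga_{s_2} \neq \tilde\ga^\ast_{s_2}$, and since $\ck$ has positive $W_2$-diameter one only controls $W_2(\hat\ga_r, \hat\ga_t) \leq L(s_2-r)^\ep + W_2(\ga_{s_2}, \tilde\ga^\ast_{s_2}) + L(t-s_2)^\ep$ for $r < s_2 < t$, with an additive constant that breaks the Hölder estimate for small $|t-r|$. My plan to resolve this is to work with an approximating sequence $(\tilde\ga^{\ast,n})_n \subset \cv^{\ep,L}([s_2,T])$ whose initial values at $s_2$ converge to $\ga_{s_2}$ in $(\ck,W_2)$ and whose costs converge to $V(s_2,y')$; for each $n$ the concatenation $\hat\ga^n$ of $\ga|_{[s_1,s_2]}$ with $\tilde\ga^{\ast,n}$ is genuinely admissible (up to a mild enlargement of $L$ absorbed at the end via compactness of $\cv^{\ep,L}$). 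Construction of $\tilde\ga^{\ast,n}$ would proceed by rerouting $\tilde\ga^\ast$ through $\ga_{s_2}$ near $s_2$ via a $W_2$-geodesic spread over a macroscopic sub-interval, and the cost discrepancy would be bounded by combining the continuity of the flow from Proposition~\ref{prop:cont-ga} with the Lipschitz dependence of $F$ and the boundedness of $G$ in Hypothesis~\ref{hyp:F,G}. Passing to the limit $n \to \infty$ and supremizing over $\ga$ yields $V(s_1,y) \geq \mathcal{R}$. This approximation-and-glue step, and in particular the quantitative control of the Hölder modulus under rerouting, is the main technical content of the proof.
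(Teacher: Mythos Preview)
Your $\leq$ direction is correct and matches the paper's Step~2 essentially verbatim (the paper phrases it via an $\ep$-optimal control rather than an arbitrary one followed by a supremum, but the content is identical).

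For the $\geq$ direction, the paper's Step~1 is considerably more cursory than your treatment: it simply records $V(s_1,y) \geq \int_{s_1}^{s_2} F\,dr + J_{s_2T}(\ga, x_{s_2}^{s_1,y,\ga})$ for every $\ga \in \cv^{\ep,L}([s_1,T])$ and then asserts that taking the supremum over $\ga$ yields $V(s_1,y) \geq \hat V$. You have correctly identified that this last step hides the concatenation problem --- passing from $J_{s_2T}$ to $V(s_2,\cdot)$ in the second summand requires replacing $\ga|_{[s_2,T]}$ by a near-optimal control on $[s_2,T]$, and the resulting glued path need not lie in $\cv^{\ep,L}([s_1,T])$. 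The paper does not address this.

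However, your proposed resolution has its own gap. Rerouting $\tilde{\ga}^\ast$ near $s_2$ via a $W_2$-geodesic over a sub-interval $[s_2,s_2+\delta]$ produces a path whose $\ep$-H\"older seminorm on that sub-interval is at least $W_2(\ga_{s_2},\tilde{\ga}^\ast_{s_2+\delta})/\delta^\ep$, and in general this cannot be made $\leq L$ for any choice of $\delta>0$. Enlarging $L$ to some $L'>L$ takes your competitor $\hat{\ga}^n$ outside the admissible class defining $V(s_1,y)$, so the bound $J_{s_1T}(\hat{\ga}^n,y)\leq V(s_1,y)$ is no longer available; compactness of $\cv^{\ep,L}$ does not repair this, as it says nothing about approximating elements of $\cv^{\ep,L'}$ by elements of $\cv^{\ep,L}$. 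A clean fix would seem to require either redefining the admissible class to be closed under concatenation (e.g.\ taking $\bigcup_{L>0}\cv^{\ep,L}$) or a separate argument that the value function is insensitive to $L$.
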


\begin{proof}
Let us call $\hat{V}$ the right hand side of \eqref{eq:hjb-d}. We shall proceed by producing upper and lower bounds on $\hat{V}$. 

\noindent
\emph{Step 1: Lower bound.} Consider a given $\ga \in \cv^{\ep, L}([s,T])$. Then resorting to our notation \eqref{eq:hjb-e} we have
\beq\label{eq:hjb-f}
V(s_1,y) \geq J_{s_1 T}(\ga, y).
\eeq
Now we invoke the flow property for the dynamics $x$, which reads 
$$
x_{s_2+v}^{s_1, y, \ga} = x_v^{s_2, x_{s_2}^{s_1, y, \ga}}.
$$
Plugging this information into the right hand side of \eqref{eq:hjb-f}, we end up with
$$
V(s_1,y) \geq \int_{s_1}^{s_2} F(r, x_r^{s_1,y,\ga},\ga_r) dr + J_{s_2 T} (\ga, x_{s_2}^{s_1, y, \ga}) 
$$
Taking sup over $\ga \in \cv^{\ep, L}([s_1, T])$, we obtain 
\beq\label{eq:hjb-g}
V(s_1, y) \geq \hat{V}.
\eeq

\noindent
\emph{Step 2: Upper bound.} Fix $\ep > 0$, and pick a $\ga^{\ep} \in \cv^{\ep, L}([s_1,T])$ achieving a near maximum in $V(s_1, y)$. That is,
$$
V(s_1, y) \leq J_{s_1 T}(\ga^{\ep}, y) + \ep.
$$
This exists since the const function is continuous in $\gamma$.~
Then specifying the expression for $J$ one can write
$$
V(s_1, y) \leq \int_{s_1}^{s_2} F(r, x_r^{s_1, y, \ga^{\ep}}, \ga_r^{\ep}) dr + J_{s_2 T} (\ga^{\ep}, x_{s_2}^{s_1, y, \ga^{\ep}}) + \ep.
$$
By definition of $V$, we trivially have $J_{s_2 T}(\ga^{\ep}, x_{s_2}^{s_1, y, \ga^{\ep}}) \leq V(s_2, x_{s_2}^{s_1, y, \ga^{\ep}})$. Hence we obtain
$$
V(s_1, y) \leq \int_{s_1}^{s_2} F(r, x_r^{s_1, y, \ga^{\ep}}, \ga_r^{\ep})dr + V(s_2, x_{s_2}^{s_1, y, \ga^{\ep}}) + \ep.
$$
Since $\hat{V}$ is defined as the right hand side of \eqref{eq:hjb-d}, we thus get
$$
V(s_1, y) \leq \hat{V} +\ep.
$$
Since $\ep>0$ is arbitrary, it can be made as small enough such that we achieve the desired relation
\beq\label{eq:hjb-h}
V(s_1, y) \leq \hat{V}.
\eeq
The proof of~\eqref{eq:hjb-d} is now concluded by gathering \eqref{eq:hjb-g} and \eqref{eq:hjb-h}.
\end{proof}

Let us now turn to the heuristic version of a HJB type equation, which is obtained by assuming that $V$ is smooth enough.

\begin{proposition}\label{prop:hjb}
We assume that the conditions of Corollary~\ref{cor:eq-with-gamma} and Hypothesis~\ref{hyp:F,G} hold true. Suppose that the value function $V$ defined by \eqref{eq:hjb-d} is an element of $\cac^1([0,T] \times \R^n)$. Then $V$ solves the following first order equation:
\beq\label{eq:hjb-i}
\partial_t v(t,y) + \sup_{\ga \in \ck} H(t, y, \ga, \nabla v(t,y)) + \nabla v(t,y) \cdot \si(t,y) d \bzeta_t = 0
\eeq
for $(t,y) \in [0,T] \times \R^n$, with final condition 
\beq\label{eq:hjb-ii}
v(T,y) = G(y),
\eeq
and the Hamiltonian $H$ is defined by 
\beq\label{eq:hjb-iii}
H(t, y, \ga, p) = p \cdot \int_U b(t, y, a) \ga (da) +  F(t, y, \ga).
\eeq
\end{proposition}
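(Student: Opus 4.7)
The plan is to combine the dynamic programming principle (Proposition~\ref{prop:dpp}) with a rough chain rule applied to the composition $V(\cdot, x^{t,y,\ga}_\cdot)$, and then pass to an infinitesimal limit. Fix $(t,y) \in [0,T] \times \R^n$ and $h > 0$. Applying~\eqref{eq:hjb-d} with $s_1 = t$ and $s_2 = t+h$ yields
\[
V(t,y) \;=\; \sup_{\ga \in \cv^{\ep, L}([t,T])}\!\left\{\int_t^{t+h} F(r, x_r^{t,y,\ga}, \ga_r)\, dr \;+\; V(t+h, x_{t+h}^{t,y,\ga})\right\}.
\]

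Next, writing $x = x^{t,y,\ga}$, I would apply the chain rule for weakly controlled processes (in the spirit of Propositions~\ref{prop:smooth_of_weak} and~\ref{prop:integral_as_weak}) to expand the composition. Using $V \in \cac^1$ and the equation~\eqref{eq:hh-a} satisfied by $x$, one formally obtains
\[
V(t+h, x_{t+h}) - V(t,y) = \int_t^{t+h}\!\left[\partial_s V(s,x_s) + \nabla_y V(s,x_s)\cdot \!\int_U b(s, x_s, a)\ga_s(da)\right]\! ds + \int_t^{t+h}\!\nabla_y V(s, x_s)\cdot \si(s, x_s)\, d\bzeta_s,
\]
where the last integral is understood in the rough sense of Proposition~\ref{prop:integral_as_weak}. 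Substituting this expansion into the DPP, rearranging, dividing by $h$, and letting $h \downarrow 0$, the Lebesgue parts contribute $\partial_t V(t,y) + \sup_{\ga \in \ck} H(t,y,\ga,\nabla V(t,y))$: indeed, by continuity of the integrands and since constant-in-time trajectories $\ga_r \equiv \gamma$ are admissible for any $\ga \in \ck$, the supremum over paths collapses to the instantaneous pointwise supremum over $\ck$, producing $H$ as defined in~\eqref{eq:hjb-iii}. Crucially, since $\si$ does not depend on the control, the rough term approximates $\nabla_y V(t,y)\cdot\si(t,y)\,\der\bzeta_{t,t+h}$ to leading order and is therefore $\ga$-independent, so it factors out of the supremum and gives the formal rough driver $\nabla v(t,y)\cdot\si(t,y)\, d\bzeta_t$ in~\eqref{eq:hjb-i}. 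The terminal condition $v(T,y) = G(y)$ is immediate from~\eqref{eq:hjb-a} with $s = T$, since in that case the admissible set reduces to the trivial path and $J_{TT}(\ga,y) = G(y)$.

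The main obstacle is the rigorous interpretation of the small-time limit of the rough integral: because $\bzeta$ is only H\"older-$\al$-continuous with $\al < 1$, no classical derivative $\dot\bzeta_t$ exists and the symbol $d\bzeta_t$ appearing in~\eqref{eq:hjb-i} cannot be read pointwise. This is precisely why Proposition~\ref{prop:hjb} is only a heuristic statement; making equation~\eqref{eq:hjb-i} meaningful is the purpose of the rough test function and rough viscosity solution formalism developed in the subsequent sections of the paper.
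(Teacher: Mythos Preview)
Your proposal is correct and follows essentially the same route as the paper: DPP plus a (heuristic) chain rule for $V$ composed with the controlled dynamics, then passage to the limit $h\downarrow 0$, with the terminal condition read off directly. The paper organizes the argument slightly more explicitly as two separate inequalities---a subsolution bound obtained from any \emph{constant} control $\gamma_s\equiv\gamma\in\ck$ and a supersolution bound obtained from a near-optimal control path $\gamma^{\kappa,s_2}$---whereas you compress both into the single observation that the path supremum collapses to the pointwise one; but the content is the same, and your closing remark on why the limit in $d\bzeta_t$ is only formal matches the paper's caveat exactly.
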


\begin{proof}
We divide this proof into an upper bound and a lower bound. 

\noindent
\emph{Step 1: Upper bound.} Consider a solution to equation~\eqref{eq:rde-2} corresponding to a constant path $\ga_s \equiv \ga \in \ck$. In order to avoid ungainly notation, we simply write $x$ for this solution. Pick $0 \leq s_1 \leq s_2 \leq T$. Then according to \eqref{eq:hjb-d} we have
$$
V(s_1, x_{s_1}) \geq \int_{s_1}^{s_2} F(t, x_t, \ga) dt + V(s_2, x_{s_2}),
$$
or equivalently
\beq\label{eq:hjb-iv}
V(s_2, x_{s_2}) - V(s_1, x_{s_1}) \leq - \int_{s_1}^{s_2} F(t, x_t, \ga)dt
\eeq
Next divide \eqref{eq:hjb-iv} by $s_2 - s_1$ and take the limit $s_2 \searrow s_1$. Due to the fact that we assume $V$ to be differentiable we get
\beq\label{eq:hjb-vi}
{\left. \dfrac{d}{ds} V(s, x_s) \rrn}_{s=s_1} \leq - F(s_1, x_{s_1}, \ga). 
\eeq
Invoking the dynamics~\eqref{eq:rde-2} and the total derivative, morally we have
\beq\label{eq:hjb-v}
\partial_t V(s_1, x_{s_1}) + \nabla V(s_1, x_{s_1})\cdot \lp \int_U b(s_1, x_{s_1}, a)\ga(da) + \si(s_1, x_{s_1}) d \bzeta_{s_1} \rp
\leq - F(s_1, x_{s_1}, \ga).
\eeq
With our definition \eqref{eq:hjb-iii} for the Hamiltonian $H$ in hand, equation~\eqref{eq:hjb-v} can be read as
$$
\partial_t V(s_1, x_{s_1}) + H(s_1, x_{s_1}, \ga, \nabla V(s_1, x_{s_1})) + \nabla V(s_1, x_{s_1}) \cdot \si(s_1, x_{s_1}) d \bzeta_{s_1} \leq 0,
$$
which holds for all $\ga \in \ck$. Taking supremum in $\ga$ we have
\beq\label{eq:hjb-vii}
\partial_t V(s_1, x_{s_1}) + \sup_{\ga \in \ck}H(s_1, x_{s_1}, \ga, \nabla V(s_1, x_{s_1})) + \nabla V(s_1, x_{s_1}) \cdot \si(s_1, x_{s_1}) d \bzeta_{s_1} \leq 0.
\eeq
Otherwise stated, we have proved the upper bound in \eqref{eq:hjb-i}. 

\noindent
\emph{Step 2: Lower bound.} We now consider an arbitrarily small $\ka > 0$. For $s_1 \in [0,T]$, if $s_2 > s_1$ is such that $s_2 - s_1$ is small enough there exists $\ga \equiv \ga^{\ka, s_2}$ in $\cv^{\ep, L} ([s_1, T])$ such that
$$
V(s_1, y) - \ka (s_2 - s_1) \leq \int_{s_1}^{s_2} F(t, x_t, \ga_t) dt + V(s_2, x_{s_2}).
$$
Dividing this inequality by $s_2 - s_1$, we get
$$
\dfrac{V(s_2, x_{s_2}) - V(s_1, y)}{s_2 - s_1} + \dfrac{1}{s_2 - s_1} \int_{s_1}^{s_2} F(t, x_t, \ga_t) dt \geq -\ka.
$$
Then resorting to the fact that $V$ is differentiable as in \eqref{eq:hjb-vi}, we have
\begin{multline*}
\dfrac{1}{s_2 - s_1} \int_{s_1}^{s_2} \lcl \partial_t V(t, x_{t}) + \nabla V(t, x_{t}) \cdot \lp \int_U b(t, x_{t}, a) \ga_t(da) + \si(t, x_{t})d\bzeta_{t} \rp \right.\\
 + F(t, x_t, \ga_t) \bigg \} dt \geq -\ka.
\end{multline*}
Using the continuity properties of $b$ and $F$, as in Corollary~\ref{cor:eq-with-gamma} and Hypothesis~\ref{hyp:F,G} respectively, one can thus take limits $s_2 \searrow s_1$. In addition, observing that $H(t, x_t, \ga_t, \nabla V(t, x_t))$ converges to $\sup_{\ga \in \ck}H(s_1, x_{s_1}, \ga, \nabla V(s_1, x_{s_1}))$ as $t \searrow s_1$ (equivalently since the supremum is attained $\ga^{\ka, s_2}_{s_1} \to \argsup_{\ga \in \ck}H(s_1, x_{s_1}, \ga, \nabla V(s_1, x_{s_1}))$ as $s_2 \searrow s_1$), we obtain
\beq\label{eq:hjb-viii}
\partial_t V(s_1, x_{s_1}) + \sup_{\ga \in \ck}H(s_1, x_{s_1}, \ga, \nabla V(s_1, x_{s_1})) + \nabla V(s_1, x_{s_1}) \cdot \si(s_1, x_{s_1}) d \bzeta_{s_1} \geq - \ka.
\eeq
Since $\ka$ has been chosen arbitrarily small, we have achieved the desired upper bound.

\noindent
\emph{Step 3: Conclusion.} Gathering \eqref{eq:hjb-vii} and \eqref{eq:hjb-viii}, we have proved the desired identity \eqref{eq:hjb-i}. This completes our proof.
\end{proof}

Similar to what happens in very simple deterministic situations, the main issue with Proposition~\ref{prop:hjb} is that the value function $V$ in \eqref{eq:hjb-d} cannot be assumed to sit in $\cac^1([0,T] \times \R^m)$ in general. Let us specify the kind of regularity one might expect for $V$. 

\begin{proposition}\label{prop:V-reg}
Assume the conditions of Corollary~\ref{cor:eq-with-gamma} and Hypothesis~\ref{hyp:F,G} are met. In addition, assume that the function $G$ in~\eqref{eq:opt} is Lipschitz. Let $V$ be the value function defined by \eqref{eq:hjb-d}. Then for $(s_1, y_1),  (s_2, y_2) \in [0,T] \times \R^m$ we have 
\beq\label{eq:V-reg}
\lln V(s_2, y_2) - V(s_1, y_1) \rrn \leq C_{\bzeta, T} \lcl |y_1 - y_2| + (1+ |y_1| + |y_2|) |s_1 - s_2|^{\al} \rcl \, ,
\eeq
where $\al$ is the parameter in $(1/3, 1)$ featuring in Hypothesis~\ref{hyp:zeta}.
\end{proposition}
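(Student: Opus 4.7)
The plan is to establish \eqref{eq:V-reg} by separately controlling the spatial Lipschitz and temporal H\"older moduli of $V$, and combining them through the triangle inequality
$$
|V(s_2,y_2) - V(s_1,y_1)| \le |V(s_2,y_2) - V(s_2,y_1)| + |V(s_2,y_1) - V(s_1,y_1)|.
$$

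For the \emph{spatial part}, I would fix a time $s$ and compare $x^{s, y_1, \ga}$ and $x^{s, y_2, \ga}$ for any $\ga \in \cv^{\ep, L}([s, T])$. Proposition~\ref{prop:cont-ga}(b) produces $\|x^{s,y_2,\ga}-x^{s,y_1,\ga}\|_\infty \leq C(1+|y_1|+|y_2|)|y_1-y_2|$, and the dependence on $\ga$ in the constant is in fact uniform over $\cv^{\ep, L}$, since $\hat b(r,x):=\int_U b(r,x,a)\ga_r(da)$ shares the boundedness and Lipschitz-in-$x$ regularity of $b$ uniformly in $\ga$ (Hypothesis~\ref{hyp:rde-2-b,si}), while $\si$ is $\ga$-independent. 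Plugging this estimate into the Lipschitz continuity of $F$ in $x$ (Hypothesis~\ref{hyp:F,G}) and of $G$, and taking the supremum over $\ga$, gives $|V(s,y_2)-V(s,y_1)| \leq C_{\bzeta, T}(1+|y_1|+|y_2|)|y_1-y_2|$, which is even slightly stronger than the spatial piece of the claim.

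For the \emph{temporal part}, assume without loss of generality $s_1 < s_2$ and invoke the dynamic programming principle (Proposition~\ref{prop:dpp})
$$
V(s_1,y) = \sup_{\ga \in \cv^{\ep,L}([s_1,T])} \left\{ \int_{s_1}^{s_2} F(r, x_r^{s_1,y,\ga}, \ga_r)\,dr + V(s_2, x_{s_2}^{s_1,y,\ga}) \right\}.
$$
To upper bound $V(s_1,y) - V(s_2,y)$, the bounded running cost contributes at most $\|F\|_\infty |s_2-s_1|$, while the spatial Lipschitz property just established controls $V(s_2, x_{s_2}^{s_1,y,\ga}) - V(s_2,y)$ by $C_{\bzeta, T}(1+|y|+|x_{s_2}^{s_1,y,\ga}|)\,|x_{s_2}^{s_1,y,\ga}-y|$, uniformly in $\ga$. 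For the reverse inequality I would plug a fixed admissible path (e.g.\ a constant control $\ga_r \equiv \mu$ for any $\mu \in \ck$, which lies in $\cv^{\ep, L}([s_1, T])$) into the right-hand side of the DPP and argue symmetrically.

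The crucial estimate, and the main obstacle, is the uniform-in-$\ga$ bound
$$
\sup_{\ga \in \cv^{\ep, L}([s_1, T])} |x_{s_2}^{s_1, y, \ga} - y| \leq C_\bzeta (1+|y|) |s_2-s_1|^\alpha.
$$
To obtain this I would start from the integral formulation~\eqref{eq:hh-a}: the drift part is bounded by $\|b\|_\infty |s_2-s_1|\leq C T^{1-\al} |s_2-s_1|^\al$ thanks to Hypothesis~\ref{hyp:rde-2-b,si}, and the rough part $\int_{s_1}^{s_2}\si(r, x_r^{s_1, y, \ga})\,d\bzeta_r$ is estimated via Proposition~\ref{prop:integral_as_weak}, relation~\eqref{eq:6-b}. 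The latter requires a uniform-in-$\ga$ bound on the controlled-path seminorm $\cn[x^{s_1,y,\ga}; \cq_\bzeta^\ka]$, which I would produce by the fixed-point argument underlying Proposition~\ref{prop:exi+uni}, using once more the uniform-in-$\ga$ boundedness of $\hat b$ and of $\si$ to propagate a constant of the form $C_\bzeta(1+|y|)$. Combining the spatial and temporal estimates then delivers~\eqref{eq:V-reg}.
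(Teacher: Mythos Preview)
Your approach is correct in spirit but differs from the paper's and carries one minor slip.

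\textbf{Comparison with the paper.} The paper does not use the dynamic programming principle for the temporal modulus. Instead it establishes a single joint flow estimate
\[
|x_t^{s_1,y_1,\ga} - x_t^{s_2,y_2,\ga}| \le K\bigl(|y_1-y_2| + (1+|y_1|\vee|y_2|)\,|s_1-s_2|^{\al}\bigr),
\]
valid uniformly in $\ga$, together with the growth bound $|x_t^{s,y,\ga}|\le K(1+|y|)$. Feeding these directly into the definition of $J_{sT}(\ga,y)$ and the Lipschitz properties of $F,G$ yields a uniform bound on $|J_{s_1T}(\ga,y_1)-J_{s_2T}(\ga,y_2)|$, from which \eqref{eq:V-reg} follows by taking suprema. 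Your route through the DPP is equally valid and is standard in viscosity theory; it trades the joint flow estimate for the already-proved spatial Lipschitz bound on $V$ plus the short-time displacement estimate $|x_{s_2}^{s_1,y,\ga}-y|\lesssim (1+|y|)|s_2-s_1|^{\al}$. The paper's route is slightly more economical since it avoids bootstrapping the spatial regularity of $V$ back into the temporal estimate.

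\textbf{A small correction.} You write that the spatial bound $|V(s,y_2)-V(s,y_1)|\le C(1+|y_1|+|y_2|)|y_1-y_2|$ coming from Proposition~\ref{prop:cont-ga} is ``slightly stronger'' than the spatial piece of the claim. It is the other way around: the claim asks for $C|y_1-y_2|$, and the factor $(1+|y_1|+|y_2|)$ makes your bound \emph{weaker}. This propagates: when you apply it in the temporal step with $y_2=x_{s_2}^{s_1,y,\ga}$ and $|x_{s_2}^{s_1,y,\ga}|\lesssim 1+|y|$, you pick up $(1+|y|)^2|s_2-s_1|^{\al}$ rather than $(1+|y|)|s_2-s_1|^{\al}$. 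To recover the exact statement you need the sharper spatial flow estimate $\|x^{s,y_2,\ga}-x^{s,y_1,\ga}\|_\infty\le C|y_1-y_2|$ (no growth factor), which does hold here because $b,\si$ are bounded with bounded derivatives; this is implicit in the paper's joint estimate above. With that adjustment your argument goes through and matches the claimed exponent and prefactors.
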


\begin{proof}
%Should follow Yong-Zhou, Thm~2.5 p165, Step 1
Taking into account the conditions of Corollary~\ref{cor:eq-with-gamma} and the fact that $G$ is Lipschitz, the functions $\varphi \equiv F, G, b$ and $\sigma$ satisfy 
\begin{align}~\label{eq:lip-cont}
	\begin{cases}
		&|\varphi(t,x,u) - \varphi(t,x',u)| \leq L |x-x'|, \quad\text{for all } t \in [0,T],~x,x'\in \mathbb R~\text{and}~u \in \mathcal P_2(U)\\
		&|\varphi(t,0,u)| \leq L , \quad\text{for all } t \in [0,T]~\text{and}~u\in\mathcal P_2(U),
	\end{cases}
\end{align}
where $L > 0$.

Next, let $x_t^{s,y,\gamma}$ represent a controlled dynamic starting at time $s$ at level $y$, as defined in expression \eqref{eq:hh-a} above. Therefore, according to our inequality~\eqref{eq:hh-b} in Proposition~\ref{prop:cont-ga} plus an easy consequence of~\eqref{eq:obj}, 
%using standard rough paths arguments for the $\alpha$-H\"older rough control signal $\zeta$, 
it follows that there exists a path dependent constant $K > 0$ such that for all $s_1,s_2 \in [0,T], \, y_1,y_2 \in \mathbb R, \, t \in [\max\{s_1,s_2\},T]$ and $\gamma \in \mathcal V^\varepsilon([\min\{s_2,s_2\},T])$ we have
\begin{equation}\label{c1}
|x_t^{s_1,y_1,\gamma} - x_t^{s_2,y_2,\gamma}| \leq K \left( |y_1 - y_2| + (1+ \max\{y_1|, |y_2|\})|s_1-s_2|^\alpha \right) \, .
\end{equation}
In addition, for every $(s,y) \in [0,T] \times \mathbb R$, $t \in [s,T]$ and $\gamma \in \mathcal V^\varepsilon([s,T])$, we also have
\begin{equation}\label{c2}
|x_t^{s,y,\gamma}| \leq K (1 + |y|) \, .
\end{equation}
Using the definition \eqref{eq:hjb-e} of the cost function and~\eqref{eq:lip-cont}-\eqref{c1}-\eqref{c2}, it follows that
\begin{align}
	\begin{cases}
		|J_{s_1T}(\gamma,y) - J_{s_2T}(\gamma,y)| &\leq K \left( |y_1-y_2| + ( 1 + \max\{|y_1|,|y_2|\})|s_1-s_2|^\alpha \right) \, ,
		\\
		&\qquad ~\text{for all } s_1,s_2 \in [0,T),~y_1,y_2 \in \mathbb R, \gamma \in \mathcal V^\varepsilon([\min\{s_2,s_2\},T]).
	\end{cases}
\end{align}
Inequality~\eqref{eq:V-reg} now follows automatically from the above relation.
\end{proof}

\subsection{Viscosity solutions.}\label{sec:visc} In view of Proposition~\ref{prop:V-reg}, we will now define a notion of viscosity solution to equation~\eqref{eq:hjb-i}. To this aim we first introduce a natural set of test functions related to our equation of interest. This set is similar to the class of $\cac^1$ functions considered in the deterministic theory, albeit perturbed by a noisy term involving $\bzeta$.

\begin{definition}\label{def:j-d}
Let $\si$ be a coefficient such that Hypothesis~\ref{hyp:rde-2-b,si} is fulfilled. We consider a path $\bzeta \in \cac^{\al}([0,T]; \R^d)$ with $\al > \frac{1}{3}$ verifying Hypothesis~\ref{hyp:zeta}. We say that $\psi \in \cac^{\al, 2}( [0,T] \times \R^m ; \R)$ is an element of the set of test functions $\scrc_{\si}$ if there exists a drift term $\psi^t \in \cac_b^{0,1} ([0,T] \times \R^m; \R)$ such that $\psi$ satisfies the following 
linear rough PDE:
\beq\label{eq:j-c}
\der \psi_{s_1 s_2}( y) =  \int_{s_1}^{s_2} \psi_r^t (y) dr - \int_{s_1}^{s_2} \nabla \psi_r (y) \cdot \si^k(r, y) d \zeta_r^k, \quad \text{for all }y\in \R^m, 
\eeq 
for all $y \in \R^m$ and $0 \leq s_1 < s_2 \leq T$. 
\end{definition}
%decomposition for all $y\in \R^{m}$:
%\begin{remark}
%Note that as a consequence of \eqref{eq:j-c-} $\psi$ satisfies the following rough differential equation:
%\beq\label{eq:j-c}
%\der \psi_{s_1 s_2}( y) =  \int_{s_1}^{s_2} \psi_r^t (y) dr - \int_{s_1}^{s_2} \nabla \psi_r (y) \cdot \si(r, y) d \zeta_r, \quad \text{for all }y\in \R^m. 
%\eeq 
%\end{remark}
\begin{remark}
In Definition~\ref{def:j-d} we have introduced test functions through a simple linear PDE. It might be closer to the jets perspective (and useful to our computations below) to define test functions through their rough paths expansion. Namely if we start from \eqref{eq:j-c}, the controlled paths expansion~\eqref{eq:weakly-controlled} of $\psi$ is
\beq\label{eq:f-a}
\der \psi_{s_1 s_2}(y) = - \nabla \psi_{s_1} (y) \cdot \si^k (s_1, y) \der \zeta_{s_1 s_2}^k + \rho_{s_1 s_2}(y),
\eeq
where $\rho$ is a remainder in $\cac_2^{2 \al}$. In addition, differentiating \eqref{eq:f-a} we get the following controlled path expansion for $\nabla \psi$:
\beq\label{eq:der-nabla-psi}
\der \nabla \psi_{s_1 s_2} (y) = - \lc \nabla \psi_{s_1}(y) \cdot \nabla \si^l(s_1, y) + \nabla^2 \psi_{s_1}(y) \si^l(s_1, y) \rc \der \bzeta_{s_1 s_2}^l + \nabla \rho_{s_1 s_2}(y).
\eeq
Therefore applying Proposition~\ref{prop:integral_as_weak}, we easily get that the strongly controlled path decomposition of $\psi$ is given by
\begin{multline}\label{eq:j-c-}
\der \psi_{s_1 s_2} (y) = \psi_{s_1}^t(y) (s_2 - s_1) 
-\nabla \psi_{s_1}(y) \cdot \si^k (s_1, y)  \der \bzeta_{s_1 s_2}^k\\ 
+ \lp \nabla \psi_{s_1}(y) \cdot \nabla \si^l(s_1, y)   +  \nabla^{2} \psi_{s_1}(y) \si^l(s_1, y) \rp \cdot \si^k(s_1, y)  \bzeta_{s_1 s_2}^{2; lk} + R_{s_1 s_2}^{\psi}(y) ,
\end{multline}
where $R^{\psi} \in \cac_2^{3\ka}(\R)$. Note that we have used vector and matrix notations in \eqref{eq:j-c-}, where $\si^k$ refers to the $k^{th}$ column of the matrix function $\si$. Alternatively \eqref{eq:j-c-} may be expressed using index notations as follows:
\begin{multline}\label{eq:j-c-ii}
\der \psi_{s_1 s_2}(y) = \psi_{s_1}^t(y)(s_2 -s_1) - \pt_i \psi_{s_1}(y) \si^{ik}(s_1, y) \der \bzeta_{s_1 s_2}^k \\
+\lp \pt_i \psi_{s_1}(y) \pt_j \si^{il}(s_1, y) + \pt_{ij}^2 \psi_{s_1}(y) \si^{il}(s_1, y) \rp \si^{jk}(s_1,y) \bzeta_{s_1 s_2}^{2; lk} + R_{s_1 s_2}^{\psi}(y),
\end{multline}
where we have used the following convention for sake of compact notation:
\begin{eqnarray}
\nabla \psi_s(y) \cdot \nabla \si^l(s, y) \cdot \si^k (s,y) &=& \pt_i \psi_s(y) \pt_j \si^{il} (s,y) \si^{jk} (s,y) 
\label{eq:mat_to_ind-a}  \\
\nabla^2 \psi_s(y) \si^l(s,y) \cdot \si^k(s,y) &=& \pt_{ij}^2 \psi_s(y) \si^{il} \si^{jk}(s,y).
\label{eq:mat_to_ind-b}
\end{eqnarray}
 \end{remark}
 \begin{remark}
In case of a deterministic optimal control problem, where the state process satisfies an ordinary differential equation, test functions for the notion of viscosity solutions are in the class of $\cac^1([0,T] \times \R^n)$ functions. In equation~\eqref{eq:j-c}, the term $\int_{s_1}^{s_2} \psi_r^t(y) dr$ is reminiscent of that structure. Then the noisy term in the right hand side of \eqref{eq:j-c} mimics the noisy term in \eqref{eq:hjb-i}.  
\end{remark}

Although the notion of test function introduced in Definition~\ref{def:j-d} is a natural one, it is not immediate that the class $\scrc_{\si}$ is nonempty. We thus state a proposition in this direction.

\begin{proposition}
Under the same conditions on $\si$ and $\zeta$ as in Definition~\ref{def:j-d}, consider a function $\psi^t \in \cac_b^{0, 1} ([0,T] \times \R^m)$. Then there exists a unique solution $\psi$ to equation~\eqref{eq:j-c} in the class $\cq_{\zeta}^{\ka}(\hb{\cac^1(\R^m)})$, where we recall that $\cq_{\zeta}^{\ka}$ is introduced in Definition~\ref{def:weakly-ctrld} and Remark~\ref{rem:j-e}. For a fixed value of $y$, equation~\eqref{eq:j-c} has to be understood as in Proposition~\ref{prop:exi+uni}. 
\end{proposition}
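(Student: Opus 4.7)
My plan is to solve the linear rough transport-with-source equation \eqref{eq:j-c} by the method of characteristics, adapted to the rough setting. The characteristics are generated by the pure rough ODE
\[
dZ_r = \sigma(r,Z_r)\,d\zeta_r, \qquad Z_s = y,
\]
which, by Proposition~\ref{prop:exi+uni} applied with vanishing drift, admits a unique solution. Combined with the rough-flow theory of~\cite{bailleul-19}, this yields, for every $0\le s\le t\le T$, a $\cac^1$-diffeomorphism $\Phi_{s,t}\colon \R^m\to\R^m$ satisfying the flow property $\Phi_{s,t}\circ\Phi_{u,s}=\Phi_{u,t}$. I write $\Psi_{s,t}:=\Phi_{s,t}^{-1}$ for its spatial inverse. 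The $\cac_b^{1,3}$ regularity of $\sigma$ from Hypothesis~\ref{hyp:rde-2-b,si} ensures that $\Phi_{s,t}$ and $\nabla\Phi_{s,t}$ are strongly controlled processes in the sense of Definition~\ref{def:str-contr}, with Gubinelli derivatives computable explicitly from $\sigma$.

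Given an initial datum $\psi_0 \in \cac^1(\R^m)$ (whose specification uniquely pins down the candidate), the ansatz I would propose is
\[
\psi(t,y) := \psi_0\bigl(\Psi_{0,t}(y)\bigr) + \int_0^t \psi^r\bigl(r,\Psi_{r,t}(y)\bigr)\, dr.
\]
The motivation is the standard rough chain-rule computation: if $\psi$ satisfies \eqref{eq:j-c}, then along characteristics the rough terms $-\nabla\psi\cdot\sigma\,d\zeta$ produced by $\der\psi$ cancel against the rough increments of $\Phi_{s,\cdot}$, so that the composition $t\mapsto \psi(t,\Phi_{s,t}(y))$ evolves according to the Lebesgue ODE $d[\psi(t,\Phi_{s,t}(y))]=\psi^t(t,\Phi_{s,t}(y))\,dt$. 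Explicit integration and reparametrization by $\Psi_{0,t}$ yield the displayed formula.

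The remainder of the argument consists of three verifications. First, the candidate $\psi$ lies in $\cq_\zeta^\ka(\cac^1(\R^m))$: the $\cac^1$-regularity in space is inherited from $\psi_0$, $\psi^r(r,\cdot)$ and the flow, while the temporal controlled-path structure follows from the fact that $\Psi_{r,t}$ is itself strongly controlled, with Gubinelli derivative obtained by differentiating the identity $\Phi_{s,t}\circ\Psi_{s,t}=\mathrm{id}$. Second, the explicit strongly-controlled expansion \eqref{eq:f-a}--\eqref{eq:j-c-ii} is produced by applying Proposition~\ref{prop:str_comp} to the two compositions $\psi_0\circ\Psi_{0,\cdot}$ and $(r,t)\mapsto \psi^r(r,\Psi_{r,t}(\cdot))$; equation \eqref{eq:j-c} is then read off by matching Gubinelli derivatives. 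Third, uniqueness: two solutions sharing the same initial condition must coincide at every point $\Phi_{0,t}(y)$ by the characteristic argument, hence everywhere since $\Phi_{0,t}$ is a bijection.

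The main obstacle is the rigorous bookkeeping of Gubinelli derivatives in the second step. Proposition~\ref{prop:str_comp} requires $\cac^3$ spatial regularity of the outer function, whereas the data $\psi_0$ is only assumed $\cac^1$. The natural workaround is to regularize, running the argument first for smooth approximants $(\psi_0^{(n)}, \psi^{t,(n)})$ of the data---for which~\eqref{eq:mu_nu_str_decomp-i}--\eqref{eq:mu_nu_str_decomp-ii} and the Chen relation~\eqref{eq:zeta^2} produce \eqref{eq:j-c} term by term---and then passing to the limit using rough-flow stability estimates and dominated convergence. This closure step is the technical heart of the proof.
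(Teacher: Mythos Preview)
Your approach is essentially the same as the paper's: both reduce the linear rough transport equation to a trivial Lebesgue-drift equation via composition with the rough flow generated by $d\phi=\sigma(r,\phi)\,d\zeta$. The paper, however, presents this as a two-sentence forward reference to Proposition~\ref{prop:flow-c}, which establishes the bijection $\psi\leftrightarrow\hat\psi=\psi\circ\phi$ between $\scrc_\sigma$ and $\scrc_0$; it does not write out the explicit characteristic formula as you do. Your more detailed version has the merit of being self-contained and of surfacing the regularity mismatch between the $\cac^3$ hypothesis of Proposition~\ref{prop:str_comp} and the $\cac^1$ (or $\cac^2$) data assumed on $\psi$---a point the paper leaves implicit.
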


\begin{proof}
The proof is in fact a consequence of later developments. Namely, we will see that there is a one-to-one correspondence between elements of $\mathcal{T}_{\si}$ (seen as solutions to~\eqref{eq:j-c}) and elements $\hat \psi$ which are simply of the form 
\[
	\delta \hat \psi_{s_1 s_2}(y) = \int_{s_1}^{s_2} \hat \psi_r^t(y) dr.
\]
This correspondence is established in Proposition~\ref{prop:flow-c} by composing any $\psi$ solution to~\eqref{eq:j-c} with the rough flow defined by~\eqref{eq:flow-a}. Since the equation above has obviously a unique solution, the same is true for~\eqref{eq:j-c}. This completes the proof.
\end{proof}

With the notion of test function we have just introduced, a corresponding notion of viscosity solution can now be defined as below.

\begin{definition}\label{def:j-f}
Let $\zeta$ be a path in $\cac^{\al}([0,T]; \R^m)$ fulfilling Hypothesis~\ref{hyp:zeta}. The coefficients $b$ and $\si$ are assumed to satisfy Hypothesis~\ref{hyp:rde-2-b,si} as in the previous propositions. Consider a path $v \in \cq_{\zeta}^{\ka}(\hb{\cac^0(\R^m)})$. Recall that the set $\ct_{\si}$ is introduced in Definition~\ref{def:j-d}. We say that $v$ is a rough viscosity supersolution (resp. subsolution) of equation ~\eqref{eq:hjb-i} if
$$
v_T(y) \geq (\text{resp.} \leq)~ G(y), 
$$
and for every element $\psi$ of $\scrc_{\si}$ such that $v - \psi$ admits a local minimum (resp. maximum) at $(s,y)$, the drift $\psi^t$ in \eqref{eq:j-c} satisfies
$$
\psi_{s}^t(y) \leq (\text{resp.} \geq)~ -\sup_{\ga \in \ck} H(s, y, \ga, \nabla \psi_s(y)),
$$ 
where we recall that $H$ is defined by \eqref{eq:hjb-iii}. 
~We say that $v$ is a viscosity solution of equation~\eqref{eq:hjb-i} if it is both a viscosity supersolution and subsolution of equation~\eqref{eq:hjb-i}. 
\end{definition}

We now turn to our main result in this section, namely we prove that the value function $V$ is a rough viscosity solution of our HJB equation.

\begin{theorem}~\label{thm:v-hjb}
Let $V$ be the value function introduced in \eqref{eq:hjb-a}-\eqref{eq:hjb-e}. Then under the same conditions as for Definition~\ref{def:j-f}, $V$ is a rough viscosity solution of equation~\eqref{eq:hjb-i}. 
\end{theorem}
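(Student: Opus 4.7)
The plan is to verify the two conditions in Definition~\ref{def:j-f} by combining the dynamic programming principle (Proposition~\ref{prop:dpp}) with a rough It\^o-type chain rule. The terminal condition $V(T,y)=G(y)$ is immediate from~\eqref{eq:hjb-a}-\eqref{eq:hjb-e}, since $x_T^{T,y,\ga}=y$ and $\int_T^T F\,dr=0$. For the interior inequalities, I would fix a test function $\psi\in\ct_{\si}$, a point $(s,y)$ at which $V-\psi$ has a local extremum, and compare the $\psi$-increment along a controlled trajectory $x_r^{s,y,\ga}$ with the $V$-increment furnished by DPP.

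The technical heart of the proof is a rough chain rule asserting
\[
\der\bigl[\psi_{\cdot}(x_{\cdot}^{s,y,\ga})\bigr]_{s_1 s_2}
=\int_{s_1}^{s_2}\Bigl[\psi_r^{t}(x_r)+\nabla\psi_r(x_r)\cdot \hat b(r,x_r,\ga_r)\Bigr]\,dr,
\]
where $\hat b(r,x,\ga):=\int_U b(r,x,a)\ga(da)$. The key cancellation is that the first-order $\der\bzeta$ contribution coming from the time variable of $\psi$ in~\eqref{eq:j-c-ii} is $-\nabla\psi_r(x_r)\cdot\si(r,x_r)\der\bzeta$, while the contribution from composing with $x$ (whose Gubinelli derivative is $x^{\bzeta}_r=\si(r,x_r)$) is $+\nabla\psi_r(x_r)\cdot\si(r,x_r)\der\bzeta$, and the two terms cancel. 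Using Proposition~\ref{prop:str_comp} to compose the strongly controlled decomposition~\eqref{eq:j-c-ii} of $\psi$ with the controlled process $x$, and Proposition~\ref{prop:integral_controlled} together with the symmetry relation in~\eqref{eq:zeta^2} to handle the second-order terms, one also verifies that the $\bzeta^{2;lk}$ contributions cancel, leaving only the Lebesgue drift above.

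With the chain rule in hand, both viscosity inequalities follow as in the formal Proposition~\ref{prop:hjb}. For the supersolution property, assume $V-\psi$ has a local minimum at $(s,y)$; for any $m\in\ck$ the constant path $\ga_r\equiv m$ lies in $\cv^{\ep,L}([s,T])$, so Proposition~\ref{prop:dpp} gives $V(t,x_t)-V(s,y)\leq -\int_s^t F(r,x_r,m)\,dr$, and the local minimum yields $\psi_t(x_t)-\psi_s(y)\leq V(t,x_t)-V(s,y)$. Substituting the chain rule, dividing by $t-s$, and letting $t\searrow s$ (using continuity of $\psi^t$, $\nabla\psi$, $b$ and $F$) produces $\psi_s^t(y)+\nabla\psi_s(y)\cdot\int_U b(s,y,a)m(da)+F(s,y,m)\leq 0$, and taking the supremum over $m\in\ck$ delivers the required bound $\psi_s^t(y)\leq-\sup_{m\in\ck}H(s,y,m,\nabla\psi_s(y))$. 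For the subsolution property, assume $V-\psi$ has a local maximum at $(s,y)$; apply the near-optimal version of DPP (as in Step~2 of the proof of Proposition~\ref{prop:dpp}) to find, for each $\ep,h>0$, some $\ga^{\ep,h}\in\cv^{\ep,L}([s,s+h])$ with $V(s,y)\leq\int_s^{s+h}F(r,x_r^{\ep,h},\ga_r^{\ep,h})dr+V(s+h,x_{s+h}^{\ep,h})+\ep h$. Combining with the local max, the chain rule, and the trivial bound $H(r,x,\ga,p)\leq\sup_{m\in\ck}H(r,x,m,p)$, one gets
\[
\frac{1}{h}\int_s^{s+h}\Bigl[\psi_r^{t}(x_r^{\ep,h})+\sup_{m\in\ck}H(r,x_r^{\ep,h},m,\nabla\psi_r(x_r^{\ep,h}))\Bigr]dr\geq-\ep,
\]
and $h\searrow 0$ followed by $\ep\searrow 0$ yields $\psi_s^t(y)\geq-\sup_{m\in\ck}H(s,y,m,\nabla\psi_s(y))$. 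The passage to the limit is justified because $x_r^{\ep,h}\to y$ uniformly as $h\to 0$ (by the $\cac^{\ka}$-estimate~\eqref{eq:6-b}), and $(r,x,p)\mapsto\sup_{m\in\ck}H(r,x,m,p)$ is continuous thanks to the compactness of $\ck$ and Hypothesis~\ref{hyp:F,G}. The principal obstacle is the rough chain rule of the second paragraph; verifying the cancellation of the $\bzeta^2$ terms requires a careful bookkeeping of the strongly controlled decompositions, and the non-cancelling remainder terms must be shown to live in $\cac_2^{3\ka}$ so that sewing kills them upon differentiation in~$t$.
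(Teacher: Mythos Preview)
Your proposal is correct and follows the same architecture as the paper: DPP plus a rough chain rule for $\Phi_s=\psi_s(x_s)$, with the constant control $\ga\equiv m$ for the supersolution side and a near-optimal control for the subsolution side.

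The only noteworthy difference is in how the chain rule is established. The paper does not invoke Proposition~\ref{prop:str_comp} or the geometric symmetry in~\eqref{eq:zeta^2}; instead it splits $\der\Phi_{s_1s_2}=\Phi^1+\Phi^2$ with $\Phi^1=\psi_{s_2}(x_{s_2})-\psi_{s_1}(x_{s_2})$ and $\Phi^2=\psi_{s_1}(x_{s_2})-\psi_{s_1}(x_{s_1})$, expands $\Phi^1$ via the strongly controlled decomposition~\eqref{eq:j-c-} of $\psi$, and expands $\Phi^2$ using a \emph{reversed-time} strongly controlled expansion of $x$ anchored at $x_{s_2}$ (so that all coefficients are evaluated at the common point $x_{s_2}$). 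The $\der\bzeta$ and $\bzeta^2$ contributions then match term by term---the second-order match uses the symmetry of the Hessian $\partial^2_{ij}\psi$, not the geometric identity for $\bzeta^2$---leaving a drift plus a remainder in $\cac_2^\mu$ with $\mu>1$. Your route via Proposition~\ref{prop:str_comp} is conceptually cleaner but that proposition, as stated, composes two $\cac^3(\R^m;\R^m)$-valued processes, so a small adaptation is needed to cover the scalar-valued $\psi$ composed with the $\R^m$-valued $x$; the paper's explicit $\Phi^1+\Phi^2$ computation sidesteps this at the cost of the reversed-time bookkeeping.
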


\begin{proof}
We divide this proof in several steps.

\noindent
\emph{Step 1: An upper bound.} Let $\psi \in \scrc_{\si}$ such that $V - \psi$ admits a local minimum at $(s_1, y)$ where we recall that $\ct_{\si}$ is given in Definition~\ref{def:j-d}. Recalling Notation~\ref{not:hjb-c}, we start a dynamics $x = x^{s_1, y, \ga}$ at $(s_1, y)$, with a constant $\ga$. Then, due to the fact that $(s,y)$ is a local minimum for $V-\psi$, for $s_2 > s_1$ and $s_2$ sufficiently close to $s_1$ we have
$$
V(s_2, x_{s_2}) - \psi_{s_2}( x_{s_2}) \geq V(s_1, y) - \psi_{s_1}( y),
$$
which can be recast as 
\beq\label{eq:j-g}
V(s_2, x_{s_2}) - V(s_1, y) \geq \psi_{s_{2}}( x_{s_2}) - \psi_{s_{1}}( x_{s_1}).
\eeq
In addition, from \eqref{eq:hjb-d} one can deduce that 
\beq\label{eq:j-h}
V(s_2, x_{s_2}) - V(s_1, y) \leq -\int_{s_1}^{s_2} F(r, x_r, \ga) dr.
\eeq
Hence gathering \eqref{eq:j-g} and \eqref{eq:j-h} we end up with
\beq\label{eq:j-i}
\der \Phi_{s_1 s_2} \leq - \int_{s_1}^{s_2} F(r, x_r, \ga) dr, \quad \text{where} \quad \Phi_s = \psi_{s}(x_s). 
\eeq

\noindent
\emph{Step 2: Rough path expansion for $\Phi$.} Let us expand the path $\Phi$ defined by \eqref{eq:j-i}. That is one can write
\beq\label{eq:j-m}
\der \Phi_{s_1, s_2} = \Phi_{s_1, s_2}^1 + \Phi_{s_1, s_2}^2,
\eeq
where the increments $\Phi^1$, $\Phi^2$ are respectively defined by
\begin{equation}\label{eq:j-j}
\Phi_{s_1 s_2}^1 = \psi_{s_{2}}( x_{s_{2}}) - \psi_{s_{1}}( x_{s_{2}}) \, ,
\quad\text{and}\quad
\Phi_{s_1 s_2}^2 =\psi _{s_{1}}(x_{s_2}) - \psi_{s_{1}}( x_{s_1}) \, .
\end{equation}
Notice that $\Phi^1$ in \eqref{eq:j-j} is simply the increment $\der \psi_{s_1 s_2}(x_{s_{2}})$. It thus seems natural to use equation~\eqref{eq:j-c} in order to express this term. However, the cancellations in the stochastic terms coming from  $\Phi^1$ and $\Phi^2$ will be better observed through controlled processes expansions. We shall thus resort to the decomposition~\eqref{eq:j-c-}, which yields
\begin{multline}\label{eq:j-k}
\Phi_{s_1 s_2}^1 = \psi_{s_1}^t (x_{s_2})(s_2 - s_1) 
- \nabla \psi_{s_1}(x_{s_2}) \cdot \si^k(s_1, x_{s_2}) \der \zeta_{s_1 s_2}^k \\
+\lp \nabla \psi_{s_1}(x_{s_2}) \nabla \si^k (s_1, x_{s_2})+  \nabla^2 \psi_{s_1}(x_{s_2}) \cdot \si^k(s_1, x_{s_2}) \rp \cdot  \si^l (s_1, x_{s_2}) \zeta_{s_1 s_2}^{2;kl} + R_{s_1 s_2}^{\Phi^1},
\end{multline}
%or using indices
%\begin{multline*}
%\Phi_{s_1 s_2}^1 = \psi_{s_1}^t(x_{s_2}) (s_2 - s_1) - \pt_i \psi_{s_1}(x_{s_2}) \si^{ik}(s_1, x_{s_2}) \der \bzeta_{s_1 s_2}^k \\
%+\lp \pt_i \psi_{s_1}(x_{s_2}) \pt_j \si^{ik}(s_1, y) + \pt_{ij}^2 \psi_{s_1}(y) \si^{ik}(s_1, y) \rp \si^{jl}(s_1, y) \bzeta_{s_1 s_2}^{2; kl} + R_{s_1 s_2}^{\psi}.
%\end{multline*}
where we recall our conventions \eqref{eq:mat_to_ind-a} and \eqref{eq:mat_to_ind-b} for the matrix products in \eqref{eq:j-k} and where $R^{\Phi^1} \in \cac_2^{3 \ka}([0,T];\R)$. 
In order to handle the term $\Phi^2$, let us first apply a change of variable formula for the controlled process $x$ composed with the differentiable function $\psi_{s_1}$. This yields
\beq\label{eq:Phi^2}
\Phi_{s_1 s_2}^2 = \int_{s_1}^{s_2} \nabla \psi_{s_{1}}( x_r) dx_r.
\eeq
It then seems natural to interpret the right hand side of \eqref{eq:Phi^2} in the light of Proposition~\ref{prop:integral_controlled}. However, due to the form of our expression~\eqref{eq:j-k}, it is more convenient to handle expansions in \eqref{eq:Phi^2} starting from $x_{s_2}$. To this aim we will consider for the rough path $\zeta$ above, the reversed time process $\check{\bzeta} := \{ \check{\bzeta}_t = \bzeta_{T-t}, \text{ for }t \in [0,T]\}$. Expanding equation~\eqref{eq:rde-2} with respect to the rough path $\check{\bzeta}$ and observing that $\der \check{\bzeta}_{uv} = - \der \bzeta_{T-v, T-u}$, we let the reader check that a second order expansion of $x$ is given by
\begin{multline}\label{eq:c--}
\der x_{s_1 s_2} = \si^k(s_2, x_{s_2}) \der \zeta_{s_1 s_2}^{k} + \si^l(s_2, x_{s_2}) \nabla \si^k(s_2, x_{s_2}) \bzeta_{s_1 s_2}^{2;kl}\\
 + \lp \int_U b(s_2, x_{s_2}, a) \ga(da) \rp (s_2-s_1) +R_{s_1 s_2}^x,
\end{multline}
where we have used matrix product conventions similar to \eqref{eq:mat_to_ind-a}-\eqref{eq:mat_to_ind-b} for notational sake. In order to avoid possible ambiguities, let us now write \eqref{eq:c--} with explicit indices:
%We now understand the RHS of \eqref{eq:Phi^2} in the light of Proposition~\ref{prop:integral_controlled}. To this aim we first write the controlled path decomposition of $r \mapsto \nabla \psi_{s_1}(x_r)$ derived from Proposition~\ref{prop:smooth_of_weak}:
%\beq\label{eq:der-nabla-psi}
%\der \lp \nabla \psi_{s_1}(x) \rp_{s_1 s_2} = \nabla^2 \psi_{s_1}(x_{s_1}) \si(s_1,x_{s_1}) \der \bzeta_{s_1 s_2} + R_{s_1 s_2}^{\nabla \psi},
%\eeq
%or using indices
%$$
%\der {\lp \pt_i \psi_{s_1}(x) \rp}_{s_1 s_2} = \pt_{ij}^2 \psi_{s_1}(x_{s_1}) \si^{jk} (s_1, x_{s_1}) \der \bzeta_{s_1 s_2}^k + R_{s_1 s_2}^{\nabla \psi} \, ,
%$$
%with a remainder term $R^{\nabla \psi} \in \cac_2^{2 \ka}$. In addition, notice from \eqref{eq:rde-2} that $x$ admits a second order rough %path decomposition of the form
%
%or using indices
\begin{multline*}
\der x_{s_1 s_2}^i = \si^{ik}(s_2, x_{s_2}) \der \bzeta_{s_1 s_2}^k + \si^{il}(s_2, x_{s_2}) \pt_j \si^{ik} (s_2, x_{s_2}) \bzeta_{s_1 s_2}^{2;kl} \\
+ \lp \int_U b(s_2, x_{s_2}, a) \ga(da) \rp (s_2-s_1)+ R_{s_1 s_2}^x,
\end{multline*}
where the remainder $R^{x}$ sits in $\cac_2^{3\ka}$. 
We are now ready to expand the right hand side $\int_{s_1}^{s_2} \nabla \psi_{s_1}(x_r) dx_r$ of \eqref{eq:Phi^2} by means of Proposition~\ref{prop:integral_controlled}. That is gathering the weakly controlled decomposition \eqref{eq:der-nabla-psi} for $\nabla \psi_{s_1}(x_r)$, the strongly controlled expression \eqref{eq:c--} for $x$ and applying a slight variation of Proposition~\ref{prop:integral_controlled} (taking into account the drift term in \eqref{eq:c--}) we get
\begin{align}\label{eq:phi2-2}
&\Phi_{s_1 s_2}^2 = \int_{s_1}^{s_2} \nabla \psi_{s_{1}}( x_r) dx_r \\
&= \nabla \psi_{s_1}(x_{s_2}) \cdot \lp \int_U b(s_2, x_{s_2}, a) \ga (da) \rp (s_2 - s_1)  + \nabla \psi_{s_1}(x_{s_2}) \cdot \si^k(s_2, x_{s_2}) \der \zeta_{s_1 s_2}^k \nonumber\\
 &-  \lp \nabla \psi_{s_1}(x_{s_2}) \cdot  \nabla \si^k(s_2, x_{s_2}) \si^l (s_2, x_{s_2})   + \nabla^2 \psi_{s_1} (x_{s_2}) \si^k(s_2, x_{s_2}) \cdot \si^l(s_2, x_{s_2}) \rp \bzeta_{s_1 s_2}^{2;kl} + R_{s_1 s_2}^{\Phi^2}, \nonumber
\end{align}
where the remainder $R^{\Phi^2}$ is an element of $\cac^{3\ka}([0,T];\R)$. In relation \eqref{eq:phi2-2} above, we have used our conventions \eqref{eq:mat_to_ind-a} and \eqref{eq:mat_to_ind-b} or matrix products again. For sake of completeness we now provide a version with indices:
\beq\label{eq:f-b}
\begin{aligned}
&\Phi_{s_1 s_2}^2 = \int_{s_1}^{s_2} \pt_i \psi_{s_1}(x_r) dx_r^i  =\pt_i \psi_{s_1}(x_{s_2}) \int_U b^i(s_2, x_{s_2},a) \ga(da) (s_2 - s_1)\\ 
&+ \pt_i \psi_{s_1} (x_{s_2}) \si^{ik} (s_2, x_{s_2}) \der \bzeta_{s_1 s_2}^k - \lp \pt_i \psi_{s_1}(x_{s_2}) \pt_j \si^{ik} (s_2, x_{s_2}) \si^{jl}(s_2, x_{s_2}) \right.\\
&\left.+ \pt_{ij}^2 \psi_{s_1} (x_{s_2}) \si^{jk}(s_2, x_{s_2}) \si^{il} (s_2, x_{s_2}) \rp \bzeta_{s_1 s_2}^{2;kl} + R_{s_1 s_2}^{\Phi^2},
\end{aligned}
\eeq
where $R$ is a remainder term in $\cac_2^{\mu}$ for some $\mu \ge 1$.
Finally using \eqref{eq:j-k} and \eqref{eq:phi2-2}, and recalling \eqref{eq:j-m} we obtain
\beq\label{eq:der_phi-g}
\der \Phi_{s_1 s_2} = \lc \psi_{s_1}^t (x_{s_2}) + \nabla \psi_{s_1}(x_{s_2}) \cdot \lp \int_U b(s_2, x_{s_2}, a)\ga(da) \rp \rc (s_2 - s_1) + R_{s_1 s_2}^{\Phi},
\eeq
where the term $R^{\Phi}$ is defined by
\begin{align}\label{eq:R_phi_e}
R_{s_1 s_2}^{\Phi} &= \nabla \psi_{s_1}(x_{s_2}) \cdot \lc \si^k(s_2, x_{s_2}) - \si^k(s_1, x_{s_2}) \rc \der \zeta_{s_1 s_2}^k \nonumber \\
&+ \nabla \psi_{s_1} (x_{s_2}) \lc \nabla \si^k(s_1, x_{s_2}) \si^l(s_1, x_{s_2}) -   \nabla \si^k(s_2, x_{s_2}) \si^l(s_2, x_{s_2})\rc \bzeta_{s_1 s_2}^{2;kl} \nonumber \\
&+ \nabla^2 \psi_{s_1}(x_{s_2}) \lc \si^k(s_1, x_{s_2}) \si^l(s_1, x_{s_2}) - \si^k(s_2, x_{s_2}) \si^l(s_2, x_{s_2}) \rc \bzeta_{s_1 s_2}^{2;kl} \nonumber \\
& + R_{s_1 s_2}^{\Phi^1} + R_{s_1 s_2}^{\Phi^2}.
\end{align}
%or using indices
%\begin{align*}
%\Phi_{s_1 s_2}^1 + \Phi_{s_1 s_2}^2 &= \lc \psi_{s_1}^t(x_{s_2}) + \pt_i \psi_{s_1}(x_{s_2}) \int_U b^i(s_2, x_{s_2},a) \ga(da) \rc (s_2 - s_1)\\
%&+ \pt_i \psi_{s_1}(x_{s_2}) \lc \si^{ik}(s_2, x_{s_2}) - \si^{ik}(s_1, x_{s_2}) \rc \der \bzeta_{s_1 s_2}^k\\
%&+ \pt_i \psi_{s_1}(x_{s_2}) \lc \pt_j \si^{ik}(s_1, x_{s_2}) - \pt_j \si^{ik}(s_2, x_{s_2})\si^{jl}(s_2, x_{s_2}) \rc \bzeta_{s_1 s_2}^{2; kl} \\
%&+ \pt_{ij}^2 \psi_{s_1}(x_{s_2}) \lc \si^{jk}(s_1, x_{s_2}) \si^{il}(s_1, x_{s_2}) - \si^{jk}(s_1, x_{s_2}) \si^{il}(s_1, x_{s_2}) \rc \bzeta_{s_1 s_2}^{2; kl}.
%\end{align*}
We now claim that $R^{\Phi}$ is a remainder in $\cac_2^{\mu}([0,T]; \R)$ for a given $\mu > 1$. Indeed we are working under Hypothesis~\ref{hyp:rde-2-b,si}, which means in particular that $\si$ is a function in $\cac_b^{1,2}([0,T]\times \R^m; \R^{m,d})$. Hence it is readily checked that 
$$
\lln \nabla \psi_{s_1}(x_{s_2}) \cdot \lc \si^k(s_2, x_{s_2}) - \si^k(s_1, x_{s_2}) \rc \der \zeta_{s_1 s_2}^k  \rrn 
\leq c_{\si} \|\bzeta\|_{\al} \|\si\|_{\cac^{1,2}} \|\nabla \psi\|_{\infty} |s_2 - s_1|^{1+\al}.
$$
The other terms in the right hand side of \eqref{eq:R_phi_e} can be treated in a similar way. We conclude that there exists $\mu > 1$ such that
\beq\label{eq:R_phi_f}
R^{\Phi} \in \cac_2^{\mu}([0,T]; \R).
\eeq

\noindent
\emph{Step 3: An upper bound continued.} Let us gather our decomposition \eqref{eq:der_phi-g} and the analytic estimate \eqref{eq:R_phi_f}. One discovers that 
%Since $\si \in \cac_b^{1,2}$, $\si^k$, $\nabla \si^k \si^l$ and $\si^k \si^l$ are Lipschitz in space, we have
\beq\label{eq:phi_sum}
\lim_{s_2 \searrow s_1} \dfrac{\der \Phi_{s_1 s_2}}{s_2 - s_1}  = \psi_{s_1}^t(x_{s_1}) + \nabla \psi_{s_1} (x_{s_1}) \cdot \lp \int_U b(s_1, x_{s_1}, a) \ga(da) \rp.
\eeq
Using this information, one can also take limits in \eqref{eq:j-i} in order to get
\beq
\psi_{s_1}^t(x_{s_1}) + \nabla \psi_{s_1} (x_{s_1}) \cdot \lp \int_U b(s_1, x_{s_1}, a) \ga(da) \rp \leq - F(s_1, x_{s_1, \ga}).
\eeq
With definition~\eqref{eq:hjb-iii} for the Hamiltonian $H$ in mind and recalling that $x_{s_1} = y$, we have obtained that 
$$
\psi_{s_1}^t(y) \leq - H(s_1, y, \ga, \nabla \psi_{s_1}(y)).
$$
Furthermore we have chosen an arbitrary control $\ga \in \ck$, where we recall that the compact set $\ck$ is defined in Hypothesis~\ref{hyp:ck}. This yields, for every $s_1 \in [0,T)$,
\beq\label{eq:psi_s_j}
\psi_{s_1}^t(y) \leq \inf_{\ga \in \ck} -H(s_1, y, \ga, \nabla \psi_{s_1}(y)).
\eeq

\noindent
\emph{Step 4: A lower bound.} The lower bound for $\psi^t$ follows the same steps as the upper bound~\eqref{eq:psi_s_j}. Namely let $\psi \in \ct_{\si}$ such that $V-\psi$ attains a local maximum at $(s_1,y)$. 
%Then for any $\beta > 0$ and $s_2 > 0$ close enough to $s_1$, there exists $\ga \equiv \ga_{\beta, s_2} \in \cv^{\ep, L}[s,T]$ and the dynamics $x=x^{s_1, y, \ga} $ started at $(s_1, y)$ with this $\ga$, such that
%\begin{align}\label{eq:phi_ub_1}
%0 &\leq V(s_1, y) - \psi_{s_1}( y) - V(s_2, x_{s_2})  + \psi_{s_2}(x_{s_2}) \nonumber \\
%&\leq - \beta(s_2 - s_1) + \int_{s_1}^{s_2} F(r, x_r, \ga_r) dr + \psi_{s_2}(x_{s_2}) - \psi_{s_1}(y),
%\end{align}
Hence for an arbitrary $\gamma \in \cv^{\ep,L}([s,T])$, the corresponding dynamics $x = x^{s,y,\gamma}$ from Notation~\ref{not:hjb-c} and $s_2$ close enough to $s_1$ we have
\beq\label{eq:hh-e}
	 V(s_1,y) - \psi_{s_1}(y) - V(s_2,x_{s_2}) + \psi_{s_2}(x_{s_2}) \geq 0.
\eeq
In addition, going back to relation~\eqref{eq:hjb-d}, for any $\beta > 0$ one can choose $\gamma$ in $\cv^{\ep,L}([s,T])$ so that
\beq\label{eq:hh-f}
	\qquad V(s_1,y) - V(s_2,x_{s_2}) \leq \int_{s_1}^{s_2} F(r,x_r,\gamma_r) dr + \beta(s_2-s_1)
\eeq
Putting \eqref{eq:hh-e} and \eqref{eq:hh-f} together, we get the existence of a $\gamma \in \cv^{\ep,L}([s,T])$ such that
\beq\label{eq:phi_ub_1}
	\psi_{s_2}(x_{2}) - \psi_{s_1}(x_1) + \int_{s_1}^{s_2} F(r,x_r,\gamma_r) dr + \beta (s_2-s_1) \geq 0.
\eeq

Hence, recalling that we have set $\Phi_s = \psi_s(x_s)$ in \eqref{eq:j-i} and invoking the expansion \eqref{eq:der_phi-g} for $\der \Phi$, we obtain that $\der \Phi_{s_1 s_2} = \psi_{s_2}(x_{s_2}) - \psi_{s_1}(y)$ satisfies relation \eqref{eq:phi_sum}. Hence one can divide \eqref{eq:phi_ub_1} by $s_2 - s_1$ and take limits as $s_2 \to s_1$. This yields
%\beq\label{eq:phi_sum_lim}
%\lim_{s_2 \searrow s_1} \lp \psi_{s_2}( x_{s_2}) - \psi_{s_1}( y) \rp = \psi_t^y(y) + \nabla \psi_{s_1} (y) \lp \int_U b(s,y,a) %\ga_{s_1}(da) \rp.
%\eeq
%Dividing \eqref{eq:phi_ub_1} by $s_2 - s_1$ and taking $\lim_{s_2 \searrow s_1}$ we obtain using \eqref{eq:phi_sum_lim}
\beq\label{eq:phi_sum_lim-k}
\psi_{s_1}^t(y) + \beta \geq -H(s_1, y, \ga_{s_1}, \nabla \psi_{s_1}(y)) \geq \inf_{\ga \in \ck} -H(s, y, \ga, \nabla \psi_{s_1}(y)).
\eeq
Eventually we remark that $\beta$ is an arbitrary small positive constant in \eqref{eq:phi_sum_lim-k}. Letting $\beta \searrow 0$ we obtain 
\beq\label{eq:psi_s_l}
\psi_{s_1}^t(y) \geq - \sup_{\ga \in \ck} H(s, y, \ga, \nabla \psi_{s_1}(y)).
\eeq

\noindent
\emph{Step 5: Conclusion.} Putting together \eqref{eq:psi_s_j} and \eqref{eq:psi_s_l}, we have obtained that for all $\psi \in \ct_{\si}$ such that $V - \psi$ admits a local minimum (resp. maximum) at $(s_1, y)$ we have
$$
\psi_{s_1}^t(y) \leq (\text{resp. } \geq) - \sup_{\ga \in \ck} H(s_1, y, \ga, \nabla_{s_1}(y)).
$$
This corresponds to the notion of rough viscosity solution introduced in Definition~\ref{def:j-f}, and finishes our proof.
\end{proof}

\begin{remark}
	Let us highlight the fact that our structure $\mathcal T_{\sigma}$ for test functions in Definition~\ref{def:j-d} has been chosen precisely so that the controlled path decompositions of $\Phi^1$ and $\Phi^2$ (resp. in~\eqref{eq:j-k} and~\eqref{eq:phi2-2}) cancel out up to terms of order 1. This point is key to identifying viscosity solutions.
\end{remark}

\subsection{Rough Flows}\label{sec:flows}

The uniqueness of solution to equation \eqref{eq:hjb-i} will be obtained by formulating an equivalent equation where we get rid of the rough term. We prepare the ground for this transformation in the current section.

\begin{proposition}\label{prop:flow-a}
Let $\bzeta$ be a path satisfying Hypothesis~\ref{hyp:zeta}. For $\eta \in \R^m$ and $\si \in C_b^{1,3}([0,T]\times \R^m; \R^{m,d})$, consider the unique solution $\phi$ to the following equation:
\beq\label{eq:flow-a}
\phi_t(\eta) = \eta + \int_0^t \si^k (r, \phi_r(\eta)) d \bzeta_r^k.
\eeq
Then 
%\begin{enumerate}[wide, labelwidth=!, labelindent=0pt, label=\textnormal{(\arabic*)}]
%\setlength\itemsep{.02in}
%\item\label{it:flow-a-ii} 
$\phi(\eta)$ is well-defined as a strongly controlled process in $\tilde{\cq}_{\bzeta}^{\ka}$ with decomposition
\beq\label{eq:phi-st-dec}
\phi_s^{\bzeta; k}(\eta) = \si^k(s, \phi_s(\eta)), \quad
\phi_s^{\bzeta^2; kl}(\eta) = \nabla \si^k(s, \phi_s(\eta)) \si^l(s, \phi_s(\eta)).
\eeq
or using indices
$$
\phi_s^{\bzeta; j i_1}(\eta) = \si^{j i_1}(s, \phi_s(\eta)), \quad
\phi_s^{\bzeta^2; j i_1 i_2}(\eta) = \pt_{\eta^k} \si^{j i_1}(s, \phi_s(\eta)) \si^{k i_2}(s, \phi_s(\eta)).
$$
%\item\label{it:flow-a-i} For any $t \geq 0$, $\phi_t$ is a diffeomorphism.
%\end{enumerate}
\end{proposition}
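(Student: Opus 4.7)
The plan is to build up the strongly controlled decomposition \eqref{eq:phi-st-dec} in stages, starting from the weakly controlled regularity already afforded by Proposition~\ref{prop:exi+uni}. First I would invoke Proposition~\ref{prop:exi+uni} with drift $b\equiv 0$ to obtain, for each fixed $\eta\in\R^m$, a unique solution $\phi(\eta)\in\cq_{\bzeta}^{\ka}([0,T];\R^m)$ to~\eqref{eq:flow-a}. This already yields a first-order expansion $\der\phi^{j}_{st} = \phi^{\bzeta;j i_1}_s \der\bzeta^{i_1}_{st} + \rho^{\phi;j}_{st}$ with $\rho^{\phi}\in\cac_2^{2\ka}$. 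The task then reduces to: (a) identifying the Gubinelli derivative $\phi^{\bzeta}$; (b) upgrading the remainder $\rho^{\phi}$ into an explicit $\bzeta^{2}$-term plus a genuine $3\ka$-H\"older remainder; and (c) verifying that $\phi^{\bzeta}$ is itself weakly controlled with derivative equal to the $\bzeta^{2}$-coefficient from step~(b).

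For step~(a), I would set $w_{u} := \si(u,\phi_{u}(\eta))$, which by Proposition~\ref{prop:smooth_of_weak} lies in $\cq_{\bzeta}^{\ka}$ with Gubinelli derivative $w^{\bzeta;ji_1i_2}_s = \pt_{\eta^k}\si^{ji_1}(s,\phi_s)\phi^{\bzeta;ki_2}_s$. Substituting this integrand into~\eqref{eq:flow-a} and reading off the integral formula~\eqref{eq:b} of Proposition~\ref{prop:integral_as_weak} gives
\[
\der\phi^j_{st} = \si^{ji_1}(s,\phi_s)\der\bzeta^{i_1}_{st} + w^{\bzeta;ji_1i_2}_s \bzeta^{2;i_2 i_1}_{st} + \hat{R}^{j}_{st},
\]
with $\hat{R}^{j} = \Lambda\bigl(\rho^{w;ji_1}\der\bzeta^{i_1} + \der w^{\bzeta;ji_1i_2}\bzeta^{2;i_2i_1}\bigr)$. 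Uniqueness of the weakly controlled decomposition then forces $\phi^{\bzeta;ji_1}_s = \si^{ji_1}(s,\phi_s(\eta))$, which is the first identity of~\eqref{eq:phi-st-dec}.

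For step~(b), plugging the identification $\phi^{\bzeta;ki_2} = \si^{ki_2}(\cdot,\phi)$ back into the expression for $w^{\bzeta}$ produces the candidate
\[
\phi^{\bzeta^{2};ji_1i_2}_s = \pt_{\eta^k}\si^{ji_1}(s,\phi_s(\eta))\si^{ki_2}(s,\phi_s(\eta)),
\]
which matches the second identity of~\eqref{eq:phi-st-dec}. The remainder $\hat{R}^{j}$ belongs to $\cac_2^{3\ka}$ by Proposition~\ref{prop:La}, since its sewing-map argument has H\"older regularity $2\ka+\al$ and $\ka+2\al$ respectively, both exceeding $3\ka>1$ under Hypothesis~\ref{hyp:zeta}. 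The $\cac_b^{1,3}$ assumption on $\si$ is precisely what is needed for the second Taylor expansion of $w$ to deliver a remainder of the right order.

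For step~(c), it remains to show that $\phi^{\bzeta;ji_1}_s = \si^{ji_1}(s,\phi_s)$ is weakly controlled in the sense of~\eqref{eq:str-contr-l} with Gubinelli derivative $\phi^{\bzeta^{2};ji_1i_2}$. Since this is again the composition of the smooth function $\si$ with the weakly controlled process $\phi$, one more application of Proposition~\ref{prop:smooth_of_weak} yields $\phi^{\bzeta}\in\cq_{\bzeta}^{\ka}$ with derivative $\pt_{\eta^k}\si^{ji_1}(s,\phi_s)\phi^{\bzeta;ki_2}_s = \pt_{\eta^k}\si^{ji_1}(s,\phi_s)\si^{ki_2}(s,\phi_s)$, in agreement with step~(b). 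The main obstacle throughout is essentially bookkeeping: ensuring that the sewing-map remainder sits in $\cac_2^{3\ka}$ rather than only $\cac_2^{2\ka}$, and that the $\bzeta^{2}$-coefficient produced by steps~(a)--(b) genuinely coincides with the Gubinelli derivative of $\phi^{\bzeta}$—this compatibility is what turns $\phi$ into a \emph{strongly} (and not merely weakly) controlled process.
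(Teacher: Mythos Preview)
Your proposal is correct and follows the same route as the paper: existence and uniqueness via Proposition~\ref{prop:exi+uni}, then reading off the strongly controlled decomposition of $\phi$ from the integral representation~\eqref{eq:b} applied to the integrand $w_u=\si(u,\phi_u(\eta))$, whose Gubinelli derivative is computed through Proposition~\ref{prop:smooth_of_weak}. The paper condenses your steps~(a)--(c) into a single ``readily checked'' and records only the resulting expansion, so your argument is simply a detailed unpacking of what the paper leaves implicit.
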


\begin{proof}
Notice that the uniqueness of solution to \eqref{eq:flow-a} is a consequence of Proposition~\ref{prop:exi+uni}.
%For the proof of item~\ref{it:flow-a-i} we refer the reader to \cite[Theorem 2.8]{bailleul-19}.
It is readily checked from \eqref{eq:flow-a} that we have the following decomposition for $\phi(\eta)$, namely
$$
\der \phi_{s_1 s_2}(\eta) = \si^k(s_1, \phi_{s_1}(\eta)) \der \zeta_{s_1 s_2}^{k} + \si^l(s_1, \phi_{s_1}(\eta)) \nabla \si^k(s_1, \phi_{s_1}(\eta)) \bzeta_{s_1 s_2}^{2;kl}\\
  +R_{s_1 s_2}^{\phi},
$$
where the remainder $R^{\phi}$ sits in $\cac_2^{3\ka}$. The proof now follows from Definition~\ref{def:str-contr}. 
\end{proof}

We now derive an equation for the inverse of $\phi$.
\begin{proposition}\label{prop:flow-b}
Let $\bzeta$ be a path satisfying Hypothesis~\ref{hyp:zeta}. For $\eta \in \R^m$ and $\si \in C_b^{1,3}([0,T]\times \R^m; \R^{m,d})$, consider the unique solution to the following equation:
\beq\label{eq:flow-b}
\chi_t(\eta) = \eta - \int_0^t \nabla \chi_r(\eta) \cdot \si^k(r, \eta) d \bzeta_r^k.
\eeq
Then
\begin{enumerate}[wide, labelwidth=!, labelindent=0pt, label=\textnormal{(\arabic*)}]
\setlength\itemsep{.02in}
\item\label{it:flow-b-i} $\chi(\eta)$ is well-defined as a strongly controlled process in $\tilde{\cq}_{\ka}^{\bzeta}(\R^m)$ with decomposition:
\begin{align}\label{eq:chi-st-dec}
\chi_s^{\bzeta; k}(\eta) &= - \nabla \chi_s(\eta) \cdot \si^k(s, \eta),\\
\chi_s^{\bzeta^2;kl} (\eta) &= \lp \nabla \chi_s(\eta) \cdot \nabla \si^l(s, \eta) + \nabla^2 \chi_s(\eta) \si^l(s, \eta) \rp \cdot \si^k(s, \eta).
\end{align}
or using indices
\begin{align*}
\chi_s^{\bzeta; j i_1}(\eta) &= - \pt_{\eta^k} \chi_s^j (\eta) \si^{k i_1}(s, \eta),\\
\chi_s^{\bzeta^2;j i_1 i_2} (\eta) &= \lp \pt_{\eta^{k_1}} \chi_s^j(\eta) \pt_{\eta^{k_2}} \si^{k_1 i_2}(s, \eta) + \pt_{\eta^{k_1} \eta^{k_2}}^2 \chi_s^j(\eta) \si^{k_1 i_2}(s, \eta) \rp  \si^{k_1 i_1}(s, \eta).
\end{align*}
\item\label{it:flow-b-ii} For all $t \in [0,T]$ we have $\chi_t = \phi_t^{-1}$, where $\phi$ is the solution to \eqref{eq:flow-a}. 
\end{enumerate}
\end{proposition}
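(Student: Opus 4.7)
The overall strategy is to handle parts (1) and (2) in sequence. For existence and uniqueness in part (1), I would adapt the fixed-point argument from Proposition~\ref{prop:exi+uni} to the space $\tilde{\cq}_{\bzeta}^{\ka}(\R^m)$ of strongly controlled processes, now indexed by the spatial parameter $\eta$; the $\cac_b^{1,3}$-regularity of $\si$ ensures that $y \mapsto \nabla \chi(\eta) \cdot \si^k(r,y)$ is sufficiently regular to close a contraction on small time intervals, which are then patched together in the usual way. The explicit decomposition~\eqref{eq:chi-st-dec} would follow by applying Proposition~\ref{prop:integral_as_weak} to $-\int_{s_1}^{s_2} \nabla \chi_r(\eta) \cdot \si^k(r,\eta)\, d\bzeta_r^k$: the first-order coefficient is immediate, while the $\bzeta^2$-coefficient requires the weakly controlled decomposition of $\nabla \chi$, itself obtained by spatially differentiating~\eqref{eq:flow-b} to identify
\[
(\nabla \chi_r)^{\bzeta; l}(\eta) = -\bigl(\nabla^2 \chi_r(\eta) \cdot \si^l(r,\eta) + \nabla \chi_r(\eta) \cdot \nabla \si^l(r,\eta)\bigr).
\]

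For part (2), I would set $f_t(\eta) := \chi_t(\phi_t(\eta))$ and invoke the composition rule of Proposition~\ref{prop:str_comp} with inputs~\eqref{eq:phi-st-dec} and~\eqref{eq:chi-st-dec}. The first-order Gubinelli derivative of $f$ simplifies by direct substitution to
\[
f^{\bzeta; j i_1}(\eta) = \chi^{\bzeta; j i_1}(\phi(\eta)) + \pt_{\eta^k} \chi^j(\phi(\eta))\, \phi^{\bzeta; k i_1}(\eta) = 0.
\]
An analogous calculation using the five-term formula~\eqref{eq:mu_nu_str_decomp-ii} shows that $f^{\bzeta^2} \equiv 0$ as well, the cancellations reflecting the fact that equation~\eqref{eq:flow-b} is precisely the adjoint-flow equation associated with~\eqref{eq:flow-a}.

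Once both Gubinelli derivatives of $f$ vanish identically, the increment $\der f_{s_1 s_2}(\eta)$ reduces to its remainder $R_{s_1 s_2}(\eta)$, which belongs to $\cac_2^{3\ka}$ with $3\ka > 1$. Since $\der \circ \der = 0$, the quantity $\der R$ vanishes automatically, making $R$ an additive two-index increment of H\"older exponent exceeding $1$; hence $R \equiv 0$ by the standard Young-type argument. Combined with $f_0(\eta) = \eta$ this yields $\chi_t \circ \phi_t = \id$; the reverse composition is obtained either by repeating the argument for $\phi_t \circ \chi_t$ or by invoking the rough-flow diffeomorphism theory from~\cite{bailleul-19}, giving $\chi_t = \phi_t^{-1}$. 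The principal technical obstacle in the above program is the cancellation in the $\bzeta^2$-coefficient of $f$: five structurally distinct contributions (from $\chi^{\bzeta^2}(\phi)$, $\nabla\chi(\phi)\phi^{\bzeta^2}$, the two mixed $\nabla\chi^{\bzeta}(\phi)\phi^{\bzeta}$ terms, and the $\nabla^2\chi(\phi)\phi^{\bzeta}\phi^{\bzeta}$ term) must be paired and simplified using the explicit formulas~\eqref{eq:phi-st-dec} and~\eqref{eq:chi-st-dec}; this is bookkeeping, but delicate enough that one should fix notation carefully before executing it.
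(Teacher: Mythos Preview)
Your proposal is correct and follows essentially the same route as the paper: for Item~\ref{it:flow-b-i} the paper simply refers back to the test-function expansion~\eqref{eq:j-c-} with vanishing drift (your argument via Proposition~\ref{prop:integral_as_weak} is the spelled-out version of this), and for Item~\ref{it:flow-b-ii} the paper computes exactly the composition $\chi\circ\phi$ via Proposition~\ref{prop:str_comp}, verifies that both Gubinelli coefficients vanish (working in the $m=d=1$ case to keep indices manageable), and concludes that $\chi_t\circ\phi_t=\id$ from the initial condition. Your treatment of the reverse composition $\phi_t\circ\chi_t$ is actually more careful than the paper's, which only establishes the left-inverse relation here and implicitly relies on the subsequent diffeomorphism result (Proposition~\ref{prop:flow-diff}) for the other direction.
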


\begin{proof}
For Item~\ref{it:flow-b-i}, the forms of the coefficients in the strongly controlled representation of $\chi(\eta)$ is readily observed from that of $\psi$ in \eqref{eq:j-c} in conjunction with equation \eqref{eq:j-c-}, when the drift term $\psi^t$ is taken to be zero. We now proceed to show item~\ref{it:flow-b-ii} through an application of Proposition~\ref{prop:str_comp}. For notational sake, we show the details for the case when $m=d=1$ and suppress the dependance on $\eta$ for notational convenience. 
Indeed, using Proposition~\ref{prop:str_comp} we observe that
\begin{comment}
$$
{\lc \chi \circ \phi \rc}_s^{\bzeta}(\eta) = \chi_s^{\bzeta} (\phi_s(\eta)) + \pt_{\eta} \chi_s(\phi_s(\eta)) \phi_s^{\bzeta} (\eta).
$$
Replacing the values of $\phi^{\bzeta}$ and $\chi^{\bzeta}$ from \eqref{eq:phi-st-dec} and \eqref{eq:chi-st-dec} respectively, we obtain
$$
{\lc \chi \circ \phi \rc}_s^{\bzeta}(\eta)
= - \pt_{\eta} \chi_s(\phi_s(\eta)) \si (s, \phi_s(\eta)) + \pt_{\eta} \chi_s(\phi_s(\eta)) \si(s, \phi_s(\eta)) = 0.
$$
Furthermore, again from Proposition~\ref{prop:str_comp} we have
$$
{\lc \chi \circ \phi \rc}_s^{\bzeta^2} (\eta)
=  \chi_s^{\bzeta^2} (\phi_s(\eta)) + \pt_{\eta} \chi_s(\phi_s(\eta)) \phi_s^{\bzeta^2} (\eta) + 2 \pt_{\eta} \chi_s^{\bzeta} (\phi_s(\eta)) \phi_s^{\bzeta}(\eta) + \pt_{\eta^2}^2 \chi_s(\phi_s(\eta)) (\phi_s^{\bzeta}(\eta))^2.
$$
Replacing the values of $\chi^{\bzeta}$, $\chi^{\bzeta^2}$, $\phi^{\bzeta}$ and $\phi^{\bzeta^2}$ from \eqref{eq:phi-st-dec} and \eqref{eq:chi-st-dec} we obtain
\begin{align*}
{\lc \chi \circ \phi \rc}_s^{\bzeta^2} &(\eta) = \lc \pt_{\eta} \chi_s(\phi_s(\eta)) \pt_{\eta} \si(s, \phi_s(\eta)) + \pt_{\eta^2}^2 \chi_s(\phi_s(\eta)) \si(s, \phi_s(\eta)) \rc \si(s, \phi_s(\eta))\\
&+\pt_{\eta} \chi_s(\phi_s(\eta)) \pt_{\eta} \si(s, \phi_s(\eta)) \si(s, \phi_s(\eta)) + 2\pt_{\eta} \lc -\pt_{\eta} \chi_s(\phi_s(\eta)) \si(s, \phi_s(\eta)) \rc \si(s, \phi_s(\eta)) \\
&+\pt_{\eta^2}^2\chi_s(\phi_s(\eta)) (\si(s, \phi_s(\eta)))^2=0.
\end{align*}
\end{comment}
\beq\label{eq:f-c}
{\lp \chi \circ \phi \rp}_s^{\bzeta} = \chi_s^{\bzeta} (\phi_s) + \pt_{\eta} \chi_s(\phi_s) \phi_s^{\bzeta} .
\eeq
Replacing the values of $\phi^{\bzeta}$ and $\chi^{\bzeta}$ from \eqref{eq:phi-st-dec} and \eqref{eq:chi-st-dec} respectively, we obtain
\beq\label{eq:f-d}
{\lp \chi \circ \phi \rp}_s^{\bzeta}
= - \pt_{\eta} \chi_s(\phi_s) \si (s, \phi_s) + \pt_{\eta} \chi_s(\phi_s) \si(s, \phi_s) = 0.
\eeq
Furthermore, again from Proposition~\ref{prop:str_comp} we have
\beq\label{eq:f-e}
{\lp \chi \circ \phi \rp}_s^{\bzeta^2} 
=  \chi_s^{\bzeta^2} (\phi_s) + \pt_{\eta} \chi_s(\phi_s) \phi_s^{\bzeta^2}  + 2 \pt_{\eta} \chi_s^{\bzeta} (\phi_s) \phi_s^{\bzeta} + \pt_{\eta^2}^2 \chi_s(\phi_s) (\phi_s^{\bzeta})^2.
\eeq
Replacing the values of $\chi^{\bzeta}$, $\chi^{\bzeta^2}$, $\phi^{\bzeta}$ and $\phi^{\bzeta^2}$ from \eqref{eq:phi-st-dec} and \eqref{eq:chi-st-dec} we obtain
\begin{align}\label{eq:f-f}
&{\lp \chi \circ \phi \rp}_s^{\bzeta^2}  = \lc \pt_{\eta} \chi_s(\phi_s) \pt_{\eta} \si(s, \phi_s) + \pt_{\eta^2}^2 \chi_s(\phi_s) \si(s, \phi_s) \rc \si(s, \phi_s)+\pt_{\eta} \chi_s(\phi_s) \pt_{\eta} \si(s, \phi_s) \si(s, \phi_s) \nonumber\\
 &+ 2\pt_{\eta} \lc -\pt_{\eta} \chi_s(\phi_s) \si(s, \phi_s) \rc \si(s, \phi_s)  +\pt_{\eta^2}^2\chi_s(\phi_s) (\si(s, \phi_s))^2=0.
\end{align}
Thus $\chi \circ \phi$ must be the constant process. Since $\chi_0(\phi_0(\eta)) = \eta$, we must have
$\chi_t(\phi_t(\eta)) = \eta$, for all $t \in [0,T]$ and $\eta \in \R$. This proves $\chi_t(\eta) = \phi_t^{-1}(\eta)$. 
\end{proof}

\begin{proposition}\label{prop:flow-diff}
Let $\phi$ be the solution to \eqref{eq:flow-a} as defined in Proposition~\ref{prop:flow-a}. Then for any $t \geq 0$, $\phi_t$ is a diffeomorphism.
\end{proposition}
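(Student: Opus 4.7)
The plan is to combine the pointwise inversion already established in Proposition~\ref{prop:flow-b} with a $C^1$ regularity result for the flow, and then apply the inverse function theorem.

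First, I would establish $C^1$ regularity of $\eta \mapsto \phi_t(\eta)$. Since $\sigma \in \cac_b^{1,3}$, formally differentiating equation~\eqref{eq:flow-a} in $\eta$ suggests that the Jacobian $J_t(\eta) := \nabla_\eta \phi_t(\eta)$ solves the linear rough differential equation
\[
J_t(\eta) = I + \int_0^t \nabla_y \sigma^k(r,\phi_r(\eta))\, J_r(\eta)\, d\bzeta_r^k.
\]
Existence and uniqueness of $J$ in $\cq_{\bzeta}^\kappa$ follow from Proposition~\ref{prop:exi+uni} applied componentwise (the coefficients are linear in $J$, which is more than enough regularity). To turn this into the genuine derivative of $\phi_t$, one runs a standard stability argument: letting $D_t^h(\eta) := (\phi_t(\eta+h) - \phi_t(\eta))/|h|$, the increment $D^h$ solves a rough equation obtained by interpolation, and Proposition~\ref{prop:exi+uni} together with Proposition~\ref{prop:cont-ga}-style continuity give convergence $D^h \to J$ as $h \to 0$ in $\cq_{\bzeta}^\kappa$. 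By the same reasoning applied to~\eqref{eq:flow-b}, $\chi_t$ is also $C^1$ with Jacobian $K_t(\eta) := \nabla_\eta \chi_t(\eta)$ solving an analogous linear rough equation.

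Next, I would promote the pointwise inversion of Proposition~\ref{prop:flow-b}~\ref{it:flow-b-ii} to two-sided invertibility. The identity $\chi_t \circ \phi_t = \mathrm{id}$ together with the chain rule gives
\[
K_t(\phi_t(\eta))\, J_t(\eta) = I, \qquad \text{for all } \eta \in \R^m,\ t \in [0,T],
\]
so $J_t(\eta)$ is invertible everywhere. For surjectivity of $\phi_t$, I would run the strongly controlled composition of Proposition~\ref{prop:str_comp} with the roles of $\phi$ and $\chi$ swapped: the same cancellations displayed in~\eqref{eq:f-c}--\eqref{eq:f-f} (verbatim after relabeling) show that $(\phi \circ \chi)^{\bzeta} = 0$ and $(\phi \circ \chi)^{\bzeta^2} = 0$. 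Since $\phi_0 \circ \chi_0 = \mathrm{id}$, this forces $\phi_t \circ \chi_t = \mathrm{id}$ for all $t$, so $\phi_t$ is a global bijection with inverse $\chi_t$.

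Combining these two ingredients, $\phi_t$ is a $C^1$ bijection whose inverse $\chi_t$ is also $C^1$, and its differential is invertible at every point. This is exactly the statement that $\phi_t$ is a $C^1$ diffeomorphism of $\R^m$. The main technical hurdle is the first step: rigorously identifying the solution $J$ of the linearized rough equation with the actual spatial derivative of the flow. This is classical material in rough paths (see e.g. the flow results in \cite{bailleul-19}), but it requires care in applying the stability estimates from Section~\ref{sec:rded} uniformly in the perturbation parameter $h$. The remaining steps are then essentially algebraic consequences of the composition rule in Proposition~\ref{prop:str_comp}.
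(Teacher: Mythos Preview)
Your proposal is correct, but the key step---showing the Jacobian $\nabla\phi_t$ is invertible---proceeds differently from the paper. The paper does not differentiate the identity $\chi_t\circ\phi_t=\mathrm{id}$; instead it writes down a separate linear rough equation
\[
a_t(\eta) = I_d - \int_0^t a_r(\eta)\,\nabla\sigma^k(r,\phi_r(\eta))\,d\bzeta_r^k
\]
for a candidate inverse $a_t$ of $k_t:=\nabla\phi_t$, and then shows directly via the strongly controlled product rule that $L_t:=k_t a_t$ has $L^{\bzeta}=L^{\bzeta^2}=0$, hence $L_t\equiv L_0=I_d$. Your route---differentiate the already-established composition identity and invoke the chain rule---is arguably cleaner and avoids introducing the auxiliary equation for $a_t$, at the cost of needing $C^1$ regularity of $\chi_t$ as well. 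Conversely, the paper's argument is self-contained at the level of $\phi$ alone and showcases the strongly controlled calculus once more. You also make surjectivity explicit by redoing the Proposition~\ref{prop:str_comp} cancellation with $\phi$ and $\chi$ swapped to obtain $\phi_t\circ\chi_t=\mathrm{id}$; the paper simply appeals to Proposition~\ref{prop:flow-b}\ref{it:flow-b-ii} for ``invertibility'' without spelling out the second direction, so your treatment is slightly more complete on that point.
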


\begin{proof}
Note that according to \cite[Ch. 11]{friz-victoir} under the assumptions on $\si$ in \eqref{eq:flow-a}, $\phi_t$ is a differentiable function.
This implies that $k_t(\eta) := \nabla \phi_t(\eta)$ satisfies the following equation:
\beq\label{eq:k_t}
k_t(\eta) = I_d + \int_0^t \nabla \si^k(r, \phi_r(\eta)) k_r (\eta) d \bzeta_r^k,
\eeq
where $I_d$ is the identity matrix in $\R^{d,d}$. We claim that $a_t = k_t^{-1}$ satisfies the following equation:
\beq\label{eq:a_t}
{a_t (\eta)} = I_d - \int_0^t a_r (\eta) \nabla \si^k(r, \phi_r(\eta)) d \bzeta_r^k.
\eeq
The proof of this claim is a variant of \cite[Proposition~4.11]{friz-victoir} taking into account the time dependence in the coefficients. For simplicity and in order to avoid lengthy expressions, we will prove \eqref{eq:a_t} for $m=d=1$, and let the patient reader work out the details for the general situation. Indeed, \eqref{eq:k_t} and \eqref{eq:a_t} are now respectively
\begin{align}
k_t (\eta ) &= 1 + \int_0^t \pt_{\eta} \si(r, \phi_r(\eta)) k_r (\eta) d \zeta_r, \label{eq:k_t-1d} \\
a_t  (\eta) &= 1 - \int_0^t  a_r  (\eta)  \pt_{\eta} \si(r, \phi_r(\eta)) d \zeta_r. \label{eq:a_t-1d}
\end{align}
Along the same lines as for \eqref{eq:z-2}, it is readily checked that both $k_t$ and $a_t$ are strongly controlled processes in $\tilde{\cq}_{\bzeta}^{\ka}$ with decomposition:
\begin{align}\label{eq:k_t-str-dec}
k_s^{\bzeta} (\eta) &= \pt_{\eta} \si(s, \phi_s(\eta)) k_s(\eta), \nonumber \\
k_s^{\bzeta^2} (\eta) &= \lp \pt_{\eta^2}^2 \si(s, \phi_s(\eta)) \si(s, \phi_s(\eta)) + (\pt_{\eta} \si(s, \phi_s(\eta)))^2 \rp k_s(\eta),
\end{align}
and
\begin{align}\label{eq:a_t-str-dec}
a_s^{\zeta}(\eta) &= -a_s(\eta) \pt_{\eta} \si(s, \phi_s(\eta)), \nonumber \\
a_s^{\bzeta^2}(\eta) &= \lp (\pt_{\eta} \si(s, \phi_s(\eta)))^2 - \pt_{\eta^2}^2 \si(s, \phi_s(\eta)) \si(s, \phi_s(\eta)) \rp a_s(\eta).
\end{align}
Let $L_t(\eta) = k_t(\eta) a_t(\eta)$. Suppressing the dependence on $\eta$ for notational convenience and resorting to \eqref{eq:k_t-str-dec}-\eqref{eq:a_t-str-dec}, the strongly controlled process decomposition for $L$ can be found as follows:
\begin{align*}
\der L_{st} &= \der k_{st}  a_t  + k_s  \der a_{st} \\
&= \lp k_s^{\zeta} \der \bzeta_{st} + k_s^{\zeta^2} \bzeta_{st}^2 + \rho_{st}^k \rp \lp a_s + a_s^{\bzeta} \der \bzeta_{st} + a_s^{\bzeta^2}  \bzeta_{st}^2 + \rho_{st}^a \rp + k_s \lp a_s^{\bzeta} \der \bzeta_{st} + a_s^{\bzeta^2} \bzeta_{st}^2 + \rho_{st}^a \rp \\
&= L_s^{\bzeta} \der \bzeta_{st} +  L_{st}^{\bzeta^2} \bzeta_{st}^2 + \rho_{st}^L,
\end{align*}
where $\rho^L \in \cac^{\mu}$ for some $\mu > 1$, and
\beq\label{eq:L-coeff}
L_s^{\bzeta} = \lp k_s^{\zeta} a_s + k_s a_s^{\zeta} \rp, \qquad
L_{s}^{\bzeta^2} = \lp k_s^{\bzeta^2} a_s + k_s a_s^{\bzeta^2} + 2 k_s^{\bzeta} a_s^{\bzeta} \rp.
\eeq
Plugging the values of the coefficients in \eqref{eq:k_t-str-dec}-\eqref{eq:a_t-str-dec} in \eqref{eq:L-coeff} we find that
$$
L_s^{\bzeta} = L_s^{\bzeta^2} = 0. 
$$
Thus $L$ must be a constant process. Since $k_0(\eta) = a_0(\eta) = 1$ we must have that $L_t(\eta) = k_t(\eta) a_t(\eta) = 1$ for all $t \in [0,T]$ and $\eta \in \R$,
 that is, $a_t(\eta)$ is the inverse of $k_t(\eta) = \nabla \phi_t(\eta)$. %Thus $\phi_t$ is a diffeomorphism. 
 
Summarizing our considerations so far, we have found that for every $t \geq 0$, the Jacobian $k_t = \nabla \phi_t$ is invertible.
Combined with Proposition~\ref{prop:flow-b}~Item\ref{it:flow-b-ii}, where we have showed that $\phi_t$ is invertible, we conclude that
% It is thus a classical result \hb{(Samy: can we quote something here?)} that 
 $\phi_t$ is a diffeomorphism for all $t \geq 0$. This finishes our proof.
\end{proof}

\begin{remark}
In the proofs of Propositions~\ref{prop:flow-b} and \ref{prop:flow-diff} as well as in the following proof in this section, we shall perform most of our rough paths computations through expansions of order~2. It might sometimes (as in Proposition~\ref{prop:flow-diff}) be more elegant to invoke rough differential equations and change of variables formulae. We have sticked to expansions here, since they are more compatible with the definition of jets. Those are natural notions in viscosity theory for PDEs, and will be developed for \eqref{eq:hjb-i} in subsequent contributions.
\end{remark}

\subsection{Uniqueness of the viscosity solution}\label{sec:visc-uniq}
The rough flow considerations developed in the previous section enable us to prove uniqueness for equation~\eqref{eq:hjb-i}, by transferring the initial rough problem to a deterministic problem with rough coefficients. We begin this procedure with a map for test functions.

\begin{proposition}\label{prop:flow-c}
Consider the vector fields $\si$ as in Propositions~\ref{prop:flow-a}-\ref{prop:flow-b}, and the set $\ct_{\si}$ of test functions as in Definition~\ref{def:j-d}. 
\begin{enumerate}[wide, labelwidth=!, labelindent=0pt, label=\textnormal{(\arabic*)}]
\setlength\itemsep{.02in}

\item\label{it:A} Consider the solution $\phi$ to equation~\eqref{eq:flow-a}. Then for any $\psi \in \ct_{\si}$, the composition $\psi \circ \phi$ is an element of $\ct_0$. 

\item\label{it:B} Consider the solution $\phi^{-1}$ to equation \eqref{eq:flow-b}. Then for any $\psi \in \ct_0$, the composition $\psi \circ \phi^{-1}$ is an element of $\ct_{\si}$. 

\end{enumerate}
\end{proposition}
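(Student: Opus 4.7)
The plan is to prove Item~\ref{it:A} by a direct rough-paths computation closely patterned on the one already carried out in the proof of Theorem~\ref{thm:v-hjb}, and then to obtain Item~\ref{it:B} by the same technique applied to $\phi^{-1} = \chi$. For Item~\ref{it:A}, set $\tilde\psi_t(\eta) := \psi_t(\phi_t(\eta))$ and split the increment as
\[
\der \tilde\psi_{s_1 s_2}(\eta) = A_{s_1 s_2}(\eta) + B_{s_1 s_2}(\eta),
\]
with $A_{s_1 s_2}(\eta) = \der\psi_{s_1 s_2}(\phi_{s_2}(\eta))$ and $B_{s_1 s_2}(\eta) = \psi_{s_1}(\phi_{s_2}(\eta)) - \psi_{s_1}(\phi_{s_1}(\eta))$. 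This mirrors exactly the splitting~\eqref{eq:j-m}--\eqref{eq:j-j} used for $\Phi_s = \psi_s(x_s)$, and the whole point is that the $\der \bzeta$ and $\bzeta^2$ contributions produced by $A$ and $B$ will cancel out.

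For the term $A$, I apply the strongly controlled expansion~\eqref{eq:j-c-} of $\psi \in \ct_{\si}$ at the point $y = \phi_{s_2}(\eta)$; this reproduces exactly formula~\eqref{eq:j-k}. For the term $B$, the classical chain rule gives $B_{s_1 s_2}(\eta) = \int_{s_1}^{s_2} \nabla \psi_{s_1}(\phi_r(\eta)) \, d\phi_r(\eta)$, which I expand using Proposition~\ref{prop:integral_controlled}. The weakly controlled decomposition of $r \mapsto \nabla \psi_{s_1}(\phi_r(\eta))$ needed to feed that proposition is obtained from~\eqref{eq:der-nabla-psi} together with Proposition~\ref{prop:smooth_of_weak}, while the strongly controlled decomposition of $\phi$ is given by Proposition~\ref{prop:flow-a} (with a vanishing drift, unlike in~\eqref{eq:c--}). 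The output is precisely the analogue of~\eqref{eq:phi2-2} with the $b$-contribution deleted.

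Adding $A$ and $B$ then reproduces the cancellation leading to~\eqref{eq:der_phi-g}, the drift term involving $b$ being now absent; one therefore obtains
\[
\der \tilde\psi_{s_1 s_2}(\eta) = \psi_{s_1}^t(\phi_{s_2}(\eta))(s_2 - s_1) + R_{s_1 s_2}^{\tilde\psi}(\eta),
\]
with $R^{\tilde\psi} \in \cac_2^{\mu}([0,T];\R)$ for some $\mu > 1$, by the very same analytic bounds as in~\eqref{eq:R_phi_e}--\eqref{eq:R_phi_f}. An application of the sewing lemma (Proposition~\ref{prop:La}) then identifies $\tilde\psi$ as an element of $\ct_0$ with drift $\tilde\psi_r^t(\eta) = \psi_r^t(\phi_r(\eta))$. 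The spatial and temporal regularity required by Definition~\ref{def:j-d} follows from Proposition~\ref{prop:flow-diff}, which ensures $\phi$ is a $\cac^2$ diffeomorphism with H\"older time dependence, together with the assumed regularity of $\psi$ and $\psi^t$.

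Item~\ref{it:B} follows from an entirely parallel computation applied to $\hat\psi_t(\eta) := \psi_t(\phi_t^{-1}(\eta))$, this time feeding the strongly controlled decomposition~\eqref{eq:chi-st-dec} of $\chi = \phi^{-1}$ into Proposition~\ref{prop:integral_controlled}. The counterpart of $A$ carries no $\der \bzeta$ contribution since $\psi \in \ct_0$, while the counterpart of $B$ produces a surviving rough driver of the form $\nabla\psi_{s_1}(\chi_{s_2}(\eta)) \cdot \chi_{s_2}^{\bzeta;k} \der \bzeta_{s_1 s_2}^k$. Combining the identity $\chi_s^{\bzeta;k}(\eta) = -\nabla \chi_s(\eta) \cdot \si^k(s,\eta)$ from~\eqref{eq:chi-st-dec} with the chain rule $\nabla \hat\psi_s(\eta) = \nabla \psi_s(\chi_s(\eta)) \cdot \nabla \chi_s(\eta)$, this driver rewrites as $-\nabla\hat\psi_{s_1}(\eta) \cdot \si^k(s_1,\eta) \der \bzeta_{s_1 s_2}^k$, which is precisely the noisy term prescribed by~\eqref{eq:j-c}; the second-order corrections align similarly after the same Taylor-type bookkeeping as in~\eqref{eq:R_phi_e}. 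The main technical obstacle throughout is this second-order bookkeeping: matching every $\bzeta^2$ coefficient produced on each side of the splitting and checking that the residue lies in $\cac_2^{\mu}$ for some $\mu > 1$, so that the sewing lemma can conclude.
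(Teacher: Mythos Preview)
Your argument is correct, but the paper proceeds differently. Rather than re-running the $A+B$ splitting from Theorem~\ref{thm:v-hjb} and invoking Proposition~\ref{prop:integral_controlled} on the $B$-piece, the paper applies the abstract composition rule for strongly controlled processes, Proposition~\ref{prop:str_comp}, directly to $\psi \circ \phi$. Using the explicit coefficients $\psi_s^{\bzeta}, \psi_s^{\bzeta^2}$ from~\eqref{eq:j-c-} and $\phi_s^{\bzeta}, \phi_s^{\bzeta^2}$ from~\eqref{eq:phi-st-dec}, one plugs into formulae~\eqref{eq:mu_nu_str_decomp-i}--\eqref{eq:mu_nu_str_decomp-ii} and checks that $(\psi\circ\phi)_s^{\bzeta}=0$ and $(\psi\circ\phi)_s^{\bzeta^2}=0$ by elementary cancellation (the computation is essentially the same as~\eqref{eq:f-c}--\eqref{eq:f-f} in the proof of Proposition~\ref{prop:flow-b}). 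Item~\ref{it:B} is declared ``very similar'' and omitted.

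The paper's route is shorter and more modular: Proposition~\ref{prop:str_comp} was set up precisely so that compositions like this reduce to algebraic substitutions, and the whole proof is a few lines of coefficient matching. Your route is more hands-on but has the advantage of making the parallel with Theorem~\ref{thm:v-hjb} transparent and of exhibiting the drift $\tilde\psi_r^t(\eta)=\psi_r^t(\phi_r(\eta))$ explicitly; it also bypasses Proposition~\ref{prop:str_comp} entirely, which could be useful if one wanted a self-contained treatment. Both arguments hinge on the same underlying cancellation.
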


\begin{proof}
We will only prove Item~\ref{it:A}, since the proof of Item~\ref{it:B} is very similar. Observe that for $\eta \in \R^m$, $\psi(\eta)$ and $\phi(\eta)$ are both strongly controlled processes. Consequently, Proposition~\ref{prop:str_comp} implies that $\psi \circ \phi $ is a strongly controlled process in $\tilde{\cq}_{\ka}^{\bzeta}(\R^m)$. In addition, we claim that the coefficients for $(\psi \circ \phi)(\eta)$ vanish when $\psi \in \ct_{\si}$. We prove this claim for the special case when $m=d=1$ and leave the details of the general case to the patient reader. 
Note that from \eqref{eq:j-c-} (and along the same lines as for \eqref{eq:j-k}) we have the following coefficients for any $\psi \in \ct_{\si}$:
\beq\label{eq:psi-st-dec}
\psi_s^{\bzeta}(\eta) = - \pt_{\eta} \psi_s(\eta) \si(s, \eta), \quad \psi_s^{\bzeta^2} (\eta) = \lp \pt_{\eta} \psi_s(\eta) \pt_{\eta} \si(s, \eta) + \pt_{\eta^2}^2 \psi_s(\eta) \si(s, \eta) \rp \si(s, \eta). 
\eeq
We now proceed similarly to what we did in Proposition~\ref{prop:flow-b}. Namely we invoke decomposition~\eqref{eq:phi-st-dec} for $\phi$, relation~\eqref{eq:psi-st-dec} for $\psi$ and Proposition~\ref{prop:str_comp} for the composition $\psi \circ \phi$. Along the same lines as \eqref{eq:f-c} and \eqref{eq:f-d} we get
\beq\label{eq:psi(phi)-1}
(\psi \circ \phi)_s^{\bzeta} = \psi_s^{\bzeta}(\phi_s) + \pt_\eta \psi_s(\phi_s) \phi_s^{\bzeta}= - \pt_\eta \psi_s(\phi_s) \si (s, \phi_s) + \pt_\eta \psi_s(\phi_s) \si(s, \phi_s) =0.
\eeq
In addition, following the same steps as for \eqref{eq:f-e}-\eqref{eq:f-f} we obtain
%Similarly using \eqref{eq:mu_nu_str_decomp-ii} we have from \eqref{eq:phi-st-dec}-\eqref{eq:psi-st-dec} that
\begin{align}\label{eq:psi(phi)-2}
(\psi \circ \phi)_s^{\bzeta^2} &= \psi_s^{\bzeta^2} (\phi_s) + \pt_{\eta} \psi_s(\phi_s) \phi_s^{\bzeta^2} + 2 \pt_{\eta} \psi_s^{\bzeta} (\phi_s) \phi_s^{\bzeta} + \pt_{\eta^2}^2 \psi_s(\phi_s) (\phi_s^{\bzeta})^2 \nonumber \\
&= \lp \pt_{\eta} \psi_s(\phi_s) \pt_{\eta} \si(s, \phi_s) + \pt_{\eta^2}^2 \psi_s(\phi_s) \si(s, \phi_s) \rp \si(s, \phi_s) + \pt_{\eta} \psi_s(\phi_s) \pt_{\eta} \si(s, \phi_s) \si(s, \phi_s) \nonumber\\
&+ 2 \pt_{\eta} \lp -\pt_{\eta} \psi_s(\phi_s) \si(s, \phi_s) \rp \si(s, \phi_s) + \pt_{\eta^2}^2 \psi_s(\phi_s) (\si(s, \phi_s))^2=0.
\end{align}
Comparing with \eqref{eq:psi-st-dec} we have from \eqref{eq:psi(phi)-1}-\eqref{eq:psi(phi)-2} that $\psi \circ \phi$ is an element of $\ct_0$. 
This proves Item~\ref{it:A}. 
%Using similar arguments one can show that when $\psi \in \ct_0$, $\psi \circ \phi^{-1}$ is a strongly controlled process and the coefficients match that of the $\ct_{\si}$ test functions in \eqref{eq:j-c-}. This proves Item~\ref{it:B}.
\end{proof}
Thanks to Proposition~\ref{prop:flow-c}, we can now map the solution to the rough viscosity equation~\eqref{eq:hjb-i} to the solution of a PDE with random coefficients.
\begin{proposition}\label{prop:flow-det}
As in Proposition~\ref{prop:flow-b}, consider a  path $\bzeta$ satisfying Hypothesis~\ref{hyp:zeta} and a coefficient $\si \in C_b^{1,3}([0,T]\times \R^m; \R^{m,d})$. Hypothesis~\ref{hyp:F,G} is supposed to hold true. The flow $\phi$ is defined by \eqref{eq:flow-a}. Then a path
$v \in \cq_{\bzeta}^{\ka}$ is a rough viscosity supersolution (resp. subsolution) of \eqref{eq:hjb-i} if and only if the path $\hat{v} = v \circ \phi$ is a viscosity supersolution (resp. subsolution) of 
\beq\label{eq:flow-det}
\hat{v}_t(y) = \hat{v}_0(y) - \int_0^t \sup_{\ga \in \ck} \hat{H}(u, y, \ga, \nabla \hat{v}_u(y)) du,
\eeq
where the random Hamiltonian $\hat{H}$ is defined on $[0,T]\times \mathbb R\times \mathcal K \times \mathbb R^m$ by
\beq\label{eq:H^}
\hat{H} (u, y, \ga, p) = {H}(u, \phi_u(y), \ga, p \cdot \nabla \phi_u^{-1}(\phi_u(y))).
\eeq
\end{proposition}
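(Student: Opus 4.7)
The plan is to lift the bijection between the test-function classes $\ct_{\si}$ and $\ct_{0}$ established in Proposition~\ref{prop:flow-c} to a bijection between rough viscosity (super/sub)solutions of \eqref{eq:hjb-i} and classical (random-coefficient) viscosity (super/sub)solutions of \eqref{eq:flow-det}. I proceed in three steps. First, I transport local extrema through the flow: since Proposition~\ref{prop:flow-diff} guarantees that $\phi_{s}$ is a diffeomorphism of $\R^{m}$, the change of variables $z = \phi_{s}(y)$ maps neighborhoods bijectively, so that for any $\psi \in \ct_{\si}$ and its image $\hat\psi := \psi \circ \phi \in \ct_{0}$ provided by Proposition~\ref{prop:flow-c}\ref{it:A}, the function $v - \psi$ admits a local minimum (resp.\ maximum) at $(s,\phi_{s}(y))$ if and only if $\hat v - \hat\psi$ does so at $(s,y)$.

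The second step is to identify the drift of $\hat\psi$, showing that $\hat\psi_{s}^{t}(y) = \psi_{s}^{t}(\phi_{s}(y))$, where $\psi^{t}$ is the drift of $\psi$ as in Definition~\ref{def:j-d}. This should follow by repeating at fixed $y$ the rough expansion of $\der\bigl(\psi(\phi(y))\bigr)$ already performed in the proof of Theorem~\ref{thm:v-hjb}, with the controlled process $\phi(y)$ playing the role of $x^{s_{1},y,\ga}$. The point is that \eqref{eq:flow-a} carries no drift, so the $\nabla\psi \cdot \int b\,(s_{2}-s_{1})$-contribution in \eqref{eq:der_phi-g} disappears, while the rough and L\'evy-area increments in $\Phi^{1}$ and $\Phi^{2}$ (compare \eqref{eq:j-k} with \eqref{eq:phi2-2}) cancel by exactly the same algebraic mechanism used for the value function. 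What remains is
\[
\der \hat\psi_{s_{1}s_{2}}(y) = \psi_{s_{1}}^{t}(\phi_{s_{1}}(y))(s_{2}-s_{1}) + R_{s_{1}s_{2}}^{\hat\psi}(y),\qquad R^{\hat\psi}\in \cac_{2}^{\mu},\ \mu>1,
\]
and dividing by $s_{2}-s_{1}$ and letting $s_{2}\searrow s_{1}$ yields the claimed drift formula, consistent with \eqref{eq:j-c} read for $\si \equiv 0$.

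The third step is the Hamiltonian matching: the chain rule $\nabla\psi_{s}(\phi_{s}(y)) = \bigl(\nabla\phi_{s}^{-1}(\phi_{s}(y))\bigr)^{\tr}\nabla\hat\psi_{s}(y)$ together with \eqref{eq:hjb-iii} and \eqref{eq:H^} gives immediately $H(s,\phi_{s}(y),\ga,\nabla\psi_{s}(\phi_{s}(y))) = \hat H(s,y,\ga,\nabla\hat\psi_{s}(y))$. Combining all three steps, the rough supersolution inequality for $\psi$ at $(s,\phi_{s}(y))$ becomes exactly $\hat\psi_{s}^{t}(y) \leq -\sup_{\ga\in\ck}\hat H(s,y,\ga,\nabla\hat\psi_{s}(y))$, which is the viscosity supersolution inequality for \eqref{eq:flow-det}; the reverse direction uses Proposition~\ref{prop:flow-c}\ref{it:B} to pull a test function in $\ct_{0}$ back to $\ct_{\si}$ via $\phi^{-1}$, the subsolution case is symmetric, and the terminal condition $v_{T} \geq G$ (resp.\ $\leq$) transports to $\hat v_{T}(y) \geq G(\phi_{T}(y))$ (resp.\ $\leq$) by evaluation. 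The main obstacle I anticipate is precisely the drift identification in Step 2: one must verify carefully that the rough and L\'evy-area contributions of $\Phi^{1}$ and $\Phi^{2}$ cancel, so that $\hat\psi$ genuinely belongs to $\ct_{0}$ and not merely to a larger class of weakly controlled paths; fortunately this is the same cancellation already displayed in Theorem~\ref{thm:v-hjb}, so only routine adaptation should be required.
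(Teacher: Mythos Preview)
Your proposal is correct and follows essentially the same route as the paper: transport local extrema through the diffeomorphism $\phi_s$, invoke the test-function bijection of Proposition~\ref{prop:flow-c}, use the chain rule $\nabla\psi_{s}(\phi_{s}(y)) = \nabla\hat\psi_{s}(y)\cdot(\nabla\phi_{s}(y))^{-1}$, and match Hamiltonians via \eqref{eq:H^}. The only difference is that you are more explicit about the drift identification $\hat\psi_{s}^{t}(y)=\psi_{s}^{t}(\phi_{s}(y))$, proposing to redo the expansion \eqref{eq:der_phi-g} with the driftless process $\phi(y)$ in place of $x^{s_{1},y,\ga}$, whereas the paper simply asserts this identity in one line; your extra care here is justified and the argument you sketch is exactly the right one.
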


\begin{proof}\sloppy
We just prove that if $v$ is a rough supersolution of \eqref{eq:hjb-i}, then $\hat{v} = v \circ \phi$ is a supersolution of \eqref{eq:flow-det}. The other implications are shown in a similar way. Let $v$ be a rough viscosity supersolution of \eqref{eq:hjb-i}. From Definition~\ref{def:j-f}, this implies that $v_T(y) \geq G(y)$. In addition, for every element $\psi \in \ct_{\si}$ such that $v - \psi$ admits a local minimum at $(s,y)$, the drift $\psi^t$ satisfies
$$
\psi_s^t(y) \leq -\sup_{\ga \in \ck} H(s,y, \ga, \nabla \psi_s(y)).
$$

Now consider $\hat{v} = v\,\circ\,\phi$ for $\phi$ satisfying the flow in equation~\eqref{eq:flow-a}. The problem of showing that $\hat{v}$ is a viscosity solution of \eqref{eq:flow-det} is now reduced to showing that for any $\hat{\psi} \in \ct_0$ such that $\hat{v} - \hat{\psi}$ reaches a minimum at $(s,y)$, the drift $\hat{\psi}^t$ verifies the relation $\hat{\psi}_s^t(y) \leq -\hat{H}(t,y,\nabla \hat{v}_u(y))$. 

To that effect notice that owing to Proposition~\ref{prop:flow-c}, every $\hat{\psi} \in \ct_0$ can be written as $\hat{\psi} = \psi \circ \phi$ with $\psi \in \ct_{\si}$. Now let us simply explicit the fact that $v$ is a supersolution of \eqref{eq:hjb-i}. It means that for an element  $\psi = \hat{\psi} \circ \phi^{-1} \in \ct_{\si}$, if $v - \psi$ admits a local minimum at $(s,z)$ then $\psi^t$ is such that
\beq\label{eq:f-g}
\psi_s^t(z) \leq - \sup_{\ga \in \ck} H(s, z, \ga, \nabla \psi_s(z)).
\eeq
Since $\phi_t$ is a diffeomorphism for all $t \geq 0$, consider now a $z$ such that $y = \phi^{-1}_s(z)$, so that $z = \phi_s(y)$. Relation \eqref{eq:f-g} becomes
\beq\label{eq:f-h}
\psi_s^t \circ \phi_s(y) \leq -\sup_{\ga \in \ck} H(s, \phi_s(y), \ga, \nabla \psi_s(\phi_s(y))).
\eeq
The left hand side of \eqref{eq:f-h} is then easily identified with $\hat{\psi}_s^t(y)$, since $\hat{\psi} = \psi \circ \phi$. In addition an elementary composition rule yields
$$
\nabla \hat{\psi}_s (y) = \nabla (\psi_s \circ \phi_s)(y) = \nabla \psi_s(\phi_s(y)) \cdot \nabla \phi_s(y).
$$
Hence the Hamiltonian in the right hand side of \eqref{eq:f-h} can be recast as
\begin{eqnarray}
\label{eq:f-i}
H(s, \phi_s(y), \ga, \nabla\psi_s(\phi_s(y))) &=& 
H(s, \phi_s(y), \ga, \nabla \hat{\psi}_s(y)\cdot (\nabla \phi_s(y))^{-1}) \notag\\ 
&=& \hat{H}(s, y, \ga, \nabla \hat{\psi}_s(y)),
\end{eqnarray}
where the last identity stems from our definition \eqref{eq:H^}. Plugging relation~\eqref{eq:f-i} into \eqref{eq:f-h} we end up with
\beq\label{eq:f-j}
\hat{\psi}_s^t(y) \leq \sup_{\ga \in \ck} \hat{H} (s,y,\ga,\nabla \hat{\psi}_s(y)).
\eeq
We have thus obtained that for every $\hat{\psi} \in \ct_0$ such that $\hat{v} - \hat{\psi}$ admits a local minimum at $(s,y)$, the bound \eqref{eq:f-j} holds true. This proves that whenever $v$ is a supersolution of \eqref{eq:hjb-i}, the function $\hat{v} = v \circ \phi$ is a supersolution of \eqref{eq:flow-det}. Our claim is achieved.
\end{proof}

We are now ready to prove our main existence and uniqueness for the Hamilton-Jacobi equation~\eqref{eq:hjb-i}.
\begin{theorem}~\label{thm:ex-uniq}
Consider a coefficient $\si \in \cac_b^{1,3}([0,T] \times \R^m; \R^{m,d})$, as well as a reward function $F$ satisfying Hypothesis~\ref{hyp:F,G}. The path $\xi$ is assumed to fulfill Hypothesis~\ref{hyp:zeta}. Then,  equation~\eqref{eq:hjb-i} admits a unique rough viscosity solution in the sense of Definition~\ref{def:j-f}. 
\end{theorem}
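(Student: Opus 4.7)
The plan is to split the statement into existence and uniqueness, and in both cases to leverage the machinery developed in Sections~\ref{sec:hjb}--\ref{sec:flows}. Existence is essentially free at this point: since $F$ and $G$ satisfy Hypothesis~\ref{hyp:F,G} and $\sigma \in \cac_b^{1,3}$, the value function $V$ introduced in \eqref{eq:hjb-a}--\eqref{eq:hjb-e} is well-defined by Proposition~\ref{prop:sup}, and Theorem~\ref{thm:v-hjb} asserts that $V$ is a rough viscosity solution of \eqref{eq:hjb-i} in the sense of Definition~\ref{def:j-f}. So the only real work is uniqueness.

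For uniqueness, my strategy is to transfer the problem to a deterministic (albeit rough-in-time) first-order HJB equation via Proposition~\ref{prop:flow-det}. Namely, given two rough viscosity solutions $v^1, v^2$ of \eqref{eq:hjb-i}, I would set $\hat v^i := v^i \circ \phi$ where $\phi$ is the flow defined by \eqref{eq:flow-a}. Proposition~\ref{prop:flow-det} then guarantees that $\hat v^1$ and $\hat v^2$ are, respectively, viscosity sub- and supersolutions (and vice-versa) of the classical equation
\[
\partial_t \hat v(t,y) + \sup_{\gamma \in \ck} \hat H(t,y,\gamma,\nabla \hat v(t,y)) = 0, \qquad \hat v(T,y) = G(\phi_T(y)),
\]
where $\hat H$ is defined by \eqref{eq:H^}. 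Since $\phi_t$ is a diffeomorphism (Proposition~\ref{prop:flow-diff}), uniqueness for $v$ and uniqueness for $\hat v$ are equivalent.

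The next step is to verify that the transformed Hamiltonian $\hat H$ falls within the scope of the standard comparison principle for first-order HJB equations (e.g., the Crandall--Ishii--Lions framework). For this I would check three points:
\textbf{(i)} Continuity of $\hat H$ in $t$: this follows from the H\"older regularity of $\phi$ (which is itself a rough-path solution) together with the regularity of $b$, $F$, $\sigma$ and $\nabla \phi^{-1}$; note that $\hat H$ is merely H\"older in $t$, but standard comparison arguments accommodate this.
\textbf{(ii)} Lipschitz continuity of $\hat H$ in $p$: this follows from boundedness of $b$ together with $\nabla \phi^{-1} \in L^\infty([0,T]\times \R^m)$, itself a consequence of Proposition~\ref{prop:flow-diff} and the $\cac_b^{1,3}$ hypothesis on $\sigma$.
\textbf{(iii)} A modulus-of-continuity estimate of the form $|\hat H(t,y_1,\gamma,p) - \hat H(t,y_2,\gamma,p)| \le \omega(|y_1-y_2|(1+|p|))$: this is the key structural condition making comparison work and it follows from the Lipschitz assumptions on $b,F$ combined with the smoothness of $\phi_t$ and $\nabla \phi^{-1}_t$ provided by the rough-flow bounds.

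With these three properties in hand, the classical comparison principle for bounded continuous sub/supersolutions of first-order HJB equations on $[0,T]\times\R^m$ (e.g., Crandall--Ishii--Lions, or more precisely the time-dependent variant) yields $\hat v^1 \le \hat v^2$ and, by symmetry, $\hat v^1 = \hat v^2$. Composing with $\phi^{-1}$ gives $v^1 = v^2$, which proves uniqueness. The main obstacle I anticipate is point~\textbf{(iii)} above: one has to show that the composition with the rough flow $\phi$ does not destroy the structural Lipschitz-type estimates needed for the comparison principle, which requires combining the Lipschitz regularity of the original coefficients with quantitative bounds on $\phi$, $\phi^{-1}$ and their first derivatives coming from the rough-path estimates of Section~\ref{sec:flows}.
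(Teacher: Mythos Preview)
Your proposal is correct and follows essentially the same approach as the paper: existence via Theorem~\ref{thm:v-hjb}, and uniqueness by composing with the rough flow $\phi$ (Proposition~\ref{prop:flow-det}) to reduce to a classical first-order HJB equation, then verifying the structural Lipschitz-type conditions on the transformed Hamiltonian $\hat H$ needed for a standard comparison/uniqueness result. The paper makes this last step concrete by citing \cite[Ch.~4, Th.~2.5]{yong-zhou} and checking exactly your conditions \textbf{(ii)} and \textbf{(iii)} (it does not separately discuss \textbf{(i)}), using the uniform bounds on $\nabla\phi$, $\nabla\phi^{-1}$, $\nabla^2\phi^{-1}$ from \cite[Proposition~11.13]{friz-victoir}---precisely the rough-flow estimates you anticipated as the main obstacle.
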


\begin{proof}
We have seen in Theorem~\ref{thm:v-hjb} that the value function $V$ is a rough viscosity solution of~\eqref{eq:hjb-i} according to Definition~\ref{def:j-f}. This proves in particular the existence part of our theorem. We will thus focus on the question of uniqueness. Furthermore, in light of Proposition~\ref{prop:flow-det} the uniqueness of \eqref{eq:hjb-i} is equivalent to the uniqueness of equation~\eqref{eq:flow-det}. In order to deal with~\eqref{eq:flow-det}, recall that the random Hamiltonian $\hat H$ is defined by~\eqref{eq:H^} and introduce the quantity 
\begin{align}\label{f1}
	\tilde H(t,x,p) = \sup_{\gamma \in \mathcal K} \hat H(t,x,\gamma,p).
\end{align}
According to \cite[Ch.~4~Th.~2.5]{yong-zhou} a sufficient condition for the uniqueness of equation~\eqref{eq:hjb-i} are the following inequalities involving the Hamiltonian $\tilde{H}(t,x,p)$:
\beq\label{eq:H-bnds}
 \lln \tilde{H}(t,x,p) - \tilde{H}(t,y,p) \rrn \lesssim (1+|p|)|x-y|,  \quad \lln \tilde{H}(t,x,p) - \tilde{H}(t,x,q) \rrn \lesssim |p-q|,
\eeq
for all $t \in [0,T]$ and $x,y,p,q \in \R^m$. The proof of our theorem is thus reduced to showing~\eqref{eq:H-bnds}. 
To that effect, let us cite the following result from \cite[Proposition~11.13]{friz-victoir} which states that
\beq\label{eq:diff-pd-bnds}
\sup_{t \in [0,T]} \sup_{\eta \in \R^m} \lcl \lln \nabla \phi_t(\eta) \rrn, \lln \nabla \phi_t^{-1}(\eta) \rrn, \lln \nabla^2 \phi_t^{-1}(\eta) \rrn \rcl \leq C_{\bzeta, \si, T}, 
\eeq
for some constant $C_{\bzeta, \si, T}$ depending on these arguments.

In order to show the first inequality in~\eqref{eq:H-bnds}, we invoke~\eqref{eq:hjb-iii} and use some elementary manipulations of the supremum over $\gamma$. We get
\begin{equation}\label{eq:f2}
	\lln \tilde H(t,x,p) - \tilde H(t,y,p) \rrn \leq \mathcal{S}_t^1(x,y) + \mathcal S_{t,p}^2(x,y) \, ,
\end{equation}
where the quantities $\mathcal S_t^1$ and $\mathcal S_{t,p}^2$ are respectively defined by 
\begin{align}
	\label{eq:f3}%\tag{c}
	 \mathcal{S}_t^1(x,y) &= \sup_{\gamma \in \mathcal K} \lln F(t,\phi_t(x),\gamma) - F(t,\phi_t(y),\gamma) \rrn\\
	 \label{eq:f4}%\tag{d}
	 \mathcal{S}_{t,p}^2(x,y) &= \sup_{\gamma \in \mathcal K} \lln p \cdot \lp \nabla\phi_t^{-1}(\phi_t(x)) - \nabla \phi_t^{-1}(\phi_t(y)) \rp \int_U b(t, \phi_t(x), a) \ga(da) \rrn.
\end{align}
Next the term $\mathcal S_t^1$ in \eqref{eq:f3} is easily bounded thanks to our uniform Lipschitz assumption on $F$ in Hypothesis~\ref{hyp:F,G}. We get
\begin{align}\label{eq:f5}
	%\tag{e}
	\mathcal S_t^1(x,y) \lesssim \lln \phi_t(x) - \phi_t(y) \rrn. 
\end{align}
The right side of \eqref{eq:f5} is now bounded using from \eqref{eq:diff-pd-bnds} the fact that $\nabla \phi $  is bounded to get
\begin{comment}
as follows: observe that for $\eta \in \mathbb R^m$ we have $\phi_t(\eta) = \eta + \tilde \phi_t(0)$, where 
\begin{align*}
	\tilde \phi_t(z) = \int_0^t \tilde \sigma_\eta^k(r,\tilde \phi_r(z)) d\zeta_r^k,
	\quad\text{with}\quad
	\tilde \sigma_\eta^k(r,y) = \sigma^k(r,\eta+y) .
\end{align*}
Therefore we also have $\nabla \phi_t(\eta) = \nabla \tilde{\phi}_t(0)$. The latter term can be bounded by standard rough paths estimates. Owing to the fact that $\sigma \in \mathcal C_b^{1,3}$ (cf. \cite[Ch. 11 ]{friz-victoir} or \cite{cass-litterer-lyons}) we get that, 
$$
\sup_{t \in [0,T]} \lln \nabla \tilde{\phi}_t(0) \rrn \leq C_{\si, \bzeta, T},
$$
for some constant $C_{\bzeta, \si, T}$ depending on each argument but independent of $\eta \in \mathbb R^m$. Thus the quantity $|\nabla \phi_t(\eta)|$ is also bounded by a constant (depending on $\bzeta$, $\si$ and $T$). Reporting this information in~\eqref{eq:f5} we end up with 
\end{comment}
\begin{align} 
	\label{eq:f6}	
	\mathcal S_t^1(x,y) \leq C_{\bzeta, \si, T} |x-y|.
\end{align}
The term $\mathcal S^2$ in \eqref{eq:f4} can be handled similarly. Indeed, resorting to the boundedness of $b$ and from \eqref{eq:diff-pd-bnds} the fact that $\nabla^2 \phi^{-1}$ is bounded we obtain 
\begin{align}
	\label{eq:f7}
	\mathcal S_{t,p}^2(x,y) \leq C_{\bzeta, \si, T} |p| |x-y|.
\end{align}
This proves the first part of~\eqref{eq:H-bnds}. 

The second inequality in \eqref{eq:H-bnds} is obtained in a similar way. Namely, we start from expressions~\eqref{f1} and~\eqref{eq:H^} and using an elementary inequality involving suprema we get
\begin{multline*}
 \lln \tilde{H}(t,x,p) - \tilde{H}(t,x,q) \rrn \\
 \leq \sup_{\ga \in \ck} \lln H(t, \phi_t(x),\ga, p \cdot \nabla \phi_t^{-1}(\phi_t(x))) - H(t, \phi_t(x), \ga, q\cdot \nabla \phi_t^{-1}(\phi_t(x))) \rrn.
\end{multline*}
Moreover, recall that $H$ is introduced in~\eqref{eq:hjb-iii}. Therefore, we obtain
\begin{equation*}
\lln \tilde{H}(t,x,p) - \tilde{H}(t,x,q) \rrn
\le \sup_{\ga \in \ck} \lln (p-q) \cdot \nabla \phi_t^{-1}(\phi_t(x))\int_U b(t, \phi_t(x), a) \ga(da) \rrn .
\end{equation*}
Again, with the help of boundedness of $b$ and from \eqref{eq:diff-pd-bnds} the fact that $\nabla \phi^{-1}$ is bounded we are able to write 
\begin{equation}\label{eq:H^-2}
\lln \tilde{H}(t,x,p) - \tilde{H}(t,x,q)\rrn 
\leq C_{\bzeta, \si, T}
 |p-q| \, .
\end{equation}

Eventually, gathering \eqref{eq:f6} and \eqref{eq:f7} into \eqref{eq:f2}, we achieve the second inequality in~\eqref{eq:H-bnds} and finish our proof.
\end{proof}

\subsection{Smooth Approximations}
One of the advantages of a rough path setting for the value function $V$ in \eqref{eq:hjb-d} or the HJB equation~\eqref{eq:hjb-i} is that it easily allows limiting procedures along smooth sequences of driving processes $\bzeta^n$. We state a result, which is reminiscent of \cite[Theorem~5]{diehl}, in this direction.
\begin{theorem}\label{thm:smooth-approx}
Let $\{\bzeta^n; n \geq 1\}$ be a sequence of differentiable paths converging to $\bzeta$ in the rough paths norm given by \eqref{eq:zeta-norm}. For a fixed $n \geq 1$ we consider a family $x^{y, \ga, n}$ of controlled equations driven by $\zeta^n$, with $y \in \R^m$ and $\ga \in \cv^{\ep,L}$:
$$
x_t^{y, \ga, n} = y + \int_0^t \int_U b(r, x_r^{y, \ga, n}, a) \ga_r(da)dr + \int_0^t \si(r, x_r^{y, \ga, n}) d\bzeta_r^n.
$$
The corresponding value is defined similarly to \eqref{eq:hjb-a} by
\beq\label{eq:V^n}
V^n(s, y) = \sup \lcl J_{0T}^n (\ga, y); \ga \in \cv^{\ep,L} \rcl,
\eeq
where $J_{0T}^n$ is the analog of \eqref{eq:hjb-e}:
$$
J_{0T}^n(\ga, y) = \int_0^T F(r, x_r^{y, \ga, n}, \ga_r) dr + G(x_T^{y, \ga, n}).
$$
Then under the hypotheses of Theorem~\ref{thm:ex-uniq}, it follows that $V^n \to V$ locally uniformly on $[0,T]\times \mathbb R^m$ as $n \to \infty$.
\end{theorem}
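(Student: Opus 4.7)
The plan is to exploit the compactness of $\cv^{\ep,L}$ together with continuity of rough differential equations with respect to the driving rough path, reducing the convergence of value functions to a uniform convergence of the associated cost functionals over the compact set of admissible controls.

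First I would establish a stability estimate for the state process with respect to the driver, uniformly in the control: namely, for every compact $K \subset \R^m$,
\[
\sup_{y \in K,\, \ga \in \cv^{\ep,L}} \cn[x^{y,\ga,n} - x^{y,\ga}; \cq_{\bzeta}^{\ka}([0,T];\R^m)] \longrightarrow 0 \quad \text{as } n \to \infty.
\]
This should follow from a straightforward elaboration of the contraction argument in Proposition~\ref{prop:exi+uni}, tracking dependence of the constants on the rough-path norm of the driver along the lines of \cite[Ch.~11]{friz-victoir}. Uniformity in $\ga \in \cv^{\ep,L}$ comes from the fact that the effective drift $\hat b(s,x) = \int_U b(s,x,a)\ga_s(da)$ admits $\cac_b^{0,1}$-bounds that are uniform in $\ga$, thanks to Hypothesis~\ref{hyp:rde-2-b,si} and the compactness of $\ck$; uniformity in $y \in K$ follows from the standard a priori bound on $\|x^{y,\ga}\|_\infty$ obtained via Proposition~\ref{prop:cont-ga}.

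Second, by Hypothesis~\ref{hyp:F,G} (boundedness and $x$-Lipschitzness of $F$ uniformly in $\ga \in \ck$, boundedness and continuity of $G$) combined with the preceding step, I would get
\[
\sup_{y \in K,\, \ga \in \cv^{\ep,L}} \lln J_{0T}^n(\ga,y) - J_{0T}(\ga,y) \rrn \longrightarrow 0.
\]
Since $V^n$ and $V$ are suprema over the same set $\cv^{\ep,L}$, an elementary bound then gives
\[
\lln V^n(s,y) - V(s,y) \rrn \leq \sup_{\ga \in \cv^{\ep,L}} \lln J_{0T}^n(\ga,y) - J_{0T}(\ga,y) \rrn,
\]
which yields pointwise convergence uniformly in $y \in K$; combining with the H\"older regularity in $(s,y)$ of $V^n$ and $V$ provided by Proposition~\ref{prop:V-reg} (uniformly in $n$, thanks to uniform boundedness of $\|\bzeta^n\|_\al$), one upgrades this to locally uniform convergence on $[0,T]\times\R^m$.

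The main obstacle I foresee is the first step: extending the stability estimate of Proposition~\ref{prop:cont-ga} from continuity in $(y,\ga)$ to continuity in the rough-path driver, uniformly over $\ga \in \cv^{\ep,L}$ and $y$ on compacts. An alternative and arguably more elegant route would invoke the flow transformation of Proposition~\ref{prop:flow-det} to recast~\eqref{eq:hjb-i} as a deterministic Hamilton--Jacobi equation whose Hamiltonian $\hat H^n$ depends on $\phi^n$; then classical Barles--Perthame stability together with continuity of rough flows (Proposition~\ref{prop:flow-diff} and~\cite[Ch.~11]{friz-victoir}) would give the result via Theorem~\ref{thm:ex-uniq}. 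The direct approach outlined above, however, avoids invoking uniqueness and seems the most economical.
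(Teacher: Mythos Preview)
Your proposal is correct and follows essentially the same route as the paper: bound $|V^n - V|$ by the supremum over $\ga \in \cv^{\ep,L}$ of $|J_{0T}^n - J_{0T}|$, then invoke continuity of rough differential equations in the driving signal together with the regularity of $F$ and $G$ from Hypothesis~\ref{hyp:F,G}. Your write-up is in fact more careful than the paper's own proof, which glosses over the uniformity in $\ga$ of the convergence $x^{y,\ga,n}\to x^{y,\ga}$ and the passage to locally uniform convergence in $(s,y)$ that you handle explicitly via Proposition~\ref{prop:V-reg}.
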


\begin{proof}
We simply use the definition~\eqref{eq:V^n} of $V^n$, and recall that the value $V$ is given by \eqref{eq:hjb-a}. Then we easily get
\begin{align*}
&|V^n(t,y) - V(t,y)| \leq \sup_{\gamma \in \mathcal P(U)} \left| J_{0T}(\gamma,y,\zeta^n) - J_{0T}(\gamma,y,\zeta) \right|\\
&\leq \sup_{\gamma \in \mathcal P(U)}  \int_0^T \left| F(r, x_r^{y,\gamma,\zeta^{n}},\gamma_r) - F(r, x_r^{y,\gamma,\zeta},\gamma_r) \right| dr + \left| G(x_T^{y,\gamma,\zeta^n}) - G(x_T^{y,\gamma,\zeta}) \right|.
\end{align*}
By Hypothesis~\ref{hyp:F,G}, $F(s,x,\gamma)$ is bounded, continuous in $x$ uniformly over $\gamma \in \mathcal P(U)$ and $G$ is bounded, continuous. Since $b$ and $\sigma$ are sufficiently regular, it follows that $x_r^{y,\gamma,\zeta^n} \to x_r^{y,\gamma,\zeta}$ as $\zeta^n \to \zeta$ in the $\alpha$-H\"older rough path metric. Therefore, it follows from these observations that $V^n$ converges to $V$ locally uniformly on $[0,T]\times \mathbb R^m$ as $n \to \infty$.
\end{proof}

\begin{remark}
The approach to rough viscosity solutions in \cite{diehl} was essentially based on limits along smooth sequences of paths. Our point of view in the current paper is different: we have based our analysis on an intrinsic notion of rough viscosity solution developed in Sections~\ref{sec:visc}, \ref{sec:flows} and~\ref{sec:visc-uniq}. Now for a fixed $n$ it is classical (see \cite[Theorem~12.5.3]{BerkowitzMedhin}) that the value $V^n$ in~\eqref{eq:V^n} is the unique viscosity solution of 
\beq\label{eq:v^n}
\partial_t v(t,y) + \sup_{\gamma \in \mathcal P(U)} H(t,y,\gamma,\nabla v(t,y)) + \nabla v(t,y) \cdot \sigma(t,y) d\zeta_t^n.
\eeq
Our notion of rough viscsoity solution certainly allows to take limits directly on \eqref{eq:v^n} and show that $v^n$ converges to the solution of \eqref{eq:hjb-i}. However, the proof of this claim would be more intricate than the simple arguments exposed in the proof of Theorem~\ref{thm:smooth-approx}.

\end{remark}

\section*{Acknowledgements}
\noindent
All three authors are partially supported for this work by NSF grant  DMS-2153915. H. Honnappa is also partly supported through NSF grant CMMI-22014426.

\bigskip

\bibliographystyle{plain}
\bibliography{problem-1-34.bib}

\begin{thebibliography}{10}

\bibitem{allan-cohen}
Andrew~L. Allan and Samuel~N. Cohen.
\newblock {Pathwise stochastic control with applications to robust filtering}.
\newblock {\em The Annals of Applied Probability}, 30(5):2274 -- 2310, 2020.

\bibitem{AramanGlynn}
Victor~F Araman and Peter~W Glynn.
\newblock Fractional brownian motion with h< 1/2 as a limit of scheduled
  traffic.
\newblock {\em Journal of Applied Probability}, 49(3):710--718, 2012.

\bibitem{bailleul-19}
Isma{\"e}l Bailleul and Sebastian Riedel.
\newblock Rough flows.
\newblock {\em Journal of the Mathematical Society of Japan}, 71(3):915--978,
  2019.

\bibitem{BerkowitzMedhin}
Leonard~David Berkovitz and Negash~G Medhin.
\newblock {\em Nonlinear optimal control theory}.
\newblock CRC press, 2012.

\bibitem{bhauryal}
Neeraj Bhauryal, Ana~Bela Cruzeiro, and Carlos Oliveira.
\newblock Pathwise stochastic control and a class of stochastic partial
  differential equations.
\newblock {\em arXiv preprint arXiv:2301.09214}, 2023.

\bibitem{buckdahn}
Rainer Buckdahn, Christian Keller, Jin Ma, and Jianfeng Zhang.
\newblock Fully nonlinear stochastic and rough pdes: Classical and viscosity
  solutions.
\newblock {\em Probability, Uncertainty and Quantitative Risk}, 5(1):1--59,
  2020.

\bibitem{Dai}
Hongshuai Dai.
\newblock Tandem fluid queue with long-range dependent inputs: sticky behaviour
  and heavy traffic approximation.
\newblock {\em Queueing Systems}, 101(1-2):165--196, 2022.

\bibitem{diehl}
Joscha Diehl, Peter~K Friz, and Paul Gassiat.
\newblock Stochastic control with rough paths.
\newblock {\em Applied Mathematics \& Optimization}, 75:285--315, 2017.

\bibitem{kunita}
RM~Dudley, H~Kunita, F~Ledrappier, and H~Kunita.
\newblock Stochastic differential equations and stochastic flows of
  diffeomorphisms.
\newblock In {\em Ecole d'{\'e}t{\'e} de probabilit{\'e}s de Saint-Flour
  XII-1982}, pages 143--303. Springer, 1984.

\bibitem{fleming-nisio-b}
Wendell~H Fleming and Makiko Nisio.
\newblock On the existence of optimal stochastic controls.
\newblock {\em Journal of Mathematics and Mechanics}, 15(5):777--794, 1966.

\bibitem{fleming-nisio-a}
Wendell~H Fleming and Makiko Nisio.
\newblock On stochastic relaxed control for partially observed diffusions.
\newblock {\em Nagoya Mathematical Journal}, 93:71--108, 1984.

\bibitem{friz-hoquet-le}
Peter~K Friz, Antoine Hocquet, and Khoa L{\^e}.
\newblock Rough stochastic differential equations.
\newblock {\em arXiv preprint arXiv:2106.10340}, 2021.

\bibitem{friz-victoir}
Peter~K Friz and Nicolas~B Victoir.
\newblock {\em Multidimensional stochastic processes as rough paths: theory and
  applications}, volume 120.
\newblock Cambridge University Press, 2010.

\bibitem{Gatheral}
Jim Gatheral, Thibault Jaisson, and Mathieu Rosenbaum.
\newblock Volatility is rough.
\newblock In {\em Commodities}, pages 659--690. Chapman and Hall/CRC, 2022.

\bibitem{GhoshWeera}
Arka~P Ghosh, Alexander Roitershtein, and Ananda Weerasinghe.
\newblock Optimal control of a stochastic processing system driven by a
  fractional brownian motion input.
\newblock {\em Advances in Applied Probability}, 42(1):183--209, 2010.

\bibitem{gubinelli}
Massimiliano Gubinelli.
\newblock Controlling rough paths.
\newblock {\em Journal of Functional Analysis}, 216(1):86--140, 2004.

\bibitem{E1}
Jiequn Han, Qianxiao Li, et~al.
\newblock A mean-field optimal control formulation of deep learning.
\newblock {\em Research in the Mathematical Sciences}, 6(1):1--41, 2019.

\bibitem{Hoglund}
Melker Hoglund, Emilio Ferrucci, Camilo Hernandez, Aitor~Muguruza Gonzalez,
  Cristopher Salvi, Leandro Sanchez-Betancourt, and Yufei Zhang.
\newblock A neural rde approach for continuous-time non-markovian stochastic
  control problems.
\newblock {\em arXiv preprint arXiv:2306.14258}, 2023.

\bibitem{Kidger2}
Patrick Kidger.
\newblock On neural differential equations.
\newblock {\em arXiv preprint arXiv:2202.02435}, 2022.

\bibitem{Kidger1}
Patrick Kidger, James Morrill, James Foster, and Terry Lyons.
\newblock Neural controlled differential equations for irregular time series.
\newblock {\em Advances in Neural Information Processing Systems},
  33:6696--6707, 2020.

\bibitem{KimYang}
Jeongho Kim and Insoon Yang.
\newblock Maximum entropy optimal control of continuous-time dynamical systems.
\newblock {\em IEEE Transactions on Automatic Control}, 68(4):2018--2033, 2022.

\bibitem{KolShai}
Vladimir~Borisovich Kolmanovski{\u\i} and Leonid~Efimovich Sha{\u\i}khet.
\newblock {\em Control of systems with aftereffect}, volume 157.
\newblock American Mathematical Soc., 1996.

\bibitem{Kurtz}
Thomas~G Kurtz.
\newblock Limit theorems for workload input models, 1996.

\bibitem{Leao}
Dorival Le{\~a}o, Alberto Ohashi, and Francys~Andrews de~Souza.
\newblock Solving non-markovian stochastic control problems driven by wiener
  functionals.
\newblock {\em arXiv preprint arXiv:2003.06981}, 2020.

\bibitem{LeeWeera}
Chihoon Lee and Ananda Weerasinghe.
\newblock Stationarity and control of a tandem fluid network with fractional
  brownian motion input.
\newblock {\em Advances in Applied Probability}, 43(3):847--874, 2011.

\bibitem{lewis-vintner}
Richard~M Lewis and Richard~B Vinter.
\newblock Relaxation of optimal control problems to equivalent convex programs.
\newblock {\em Journal of Mathematical Analysis and Applications},
  74(2):475--493, 1980.

\bibitem{lions-souganidis}
Pierre-Louis Lions and Panagiotis~E Souganidis.
\newblock Fully nonlinear stochastic partial differential equations: non-smooth
  equations and applications.
\newblock {\em Comptes Rendus de l'Acad{\'e}mie des Sciences-Series
  I-Mathematics}, 327(8):735--741, 1998.

\bibitem{mazliak-nourdin}
Laurent Mazliak and Ivan Nourdin.
\newblock Optimal control for rough differential equations.
\newblock {\em Stoch. Dyn.}, 8(1):23--33, 2008.

\bibitem{mcshane}
EJ~McShane.
\newblock Relaxed controls and variational problems.
\newblock {\em SIAM Journal on Control}, 5(3):438--485, 1967.

\bibitem{Morrill}
James Morrill, Cristopher Salvi, Patrick Kidger, and James Foster.
\newblock Neural rough differential equations for long time series.
\newblock In {\em International Conference on Machine Learning}, pages
  7829--7838. PMLR, 2021.

\bibitem{munkres}
J~R Munkres.
\newblock {\em Elements of Algebraic Topology}.
\newblock Addison-Wesley, 1984.

\bibitem{Neu}
Gergely Neu, Anders Jonsson, and Vicen{\c{c}} G{\'o}mez.
\newblock A unified view of entropy-regularized markov decision processes.
\newblock {\em arXiv preprint arXiv:1705.07798}, 2017.

\bibitem{elkaroui}
Karoui Nicole~el, Nguyen Du'h{\=U}{\=U}, and Jeanblanc-Picqu{\'e} Monique.
\newblock Compactification methods in the control of degenerate diffusions:
  existence of an optimal control.
\newblock {\em Stochastics: an international journal of probability and
  stochastic processes}, 20(3):169--219, 1987.

\bibitem{polyanski-wu}
Yury Polyanskiy and Yihong Wu.
\newblock Wasserstein continuity of entropy and outer bounds for interference
  channels.
\newblock {\em IEEE Transactions on Information Theory}, 62(7):3992--4002,
  2016.

\bibitem{roubicek}
Tom{\'a}{\v{s}} Roub{\'\i}{\v{c}}ek.
\newblock {\em Relaxation in optimization theory and variational calculus},
  volume~4.
\newblock Walter de Gruyter GmbH \& Co KG, 2020.

\bibitem{souganidis-course-cetraro}
Panagiotis~E Souganidis.
\newblock Pathwise solutions for fully nonlinear first-and second-order partial
  differential equations with multiplicative rough time dependence.
\newblock {\em Singular random dynamics}, 2253:75--220, 2019.

\bibitem{Sutskever}
Ilya Sutskever.
\newblock {\em Training recurrent neural networks}.
\newblock University of Toronto Toronto, ON, Canada, 2013.

\bibitem{tang-zhang-zhou}
Wenpin Tang, Yuming~Paul Zhang, and Xun~Yu Zhou.
\newblock Exploratory hjb equations and their convergence.
\newblock {\em SIAM Journal on Control and Optimization}, 60(6):3191--3216,
  2022.

\bibitem{wang-zari-zhou}
Haoran Wang, Thaleia Zariphopoulou, and Xun~Yu Zhou.
\newblock Reinforcement learning in continuous time and space: A stochastic
  control approach.
\newblock {\em The Journal of Machine Learning Research}, 21(1):8145--8178,
  2020.

\bibitem{E2}
E~Weinan.
\newblock A proposal on machine learning via dynamical systems.
\newblock {\em Communications in Mathematics and Statistics}, 1(5):1--11, 2017.

\bibitem{yong-zhou}
Jiongmin Yong and Xun~Yu Zhou.
\newblock {\em Stochastic controls: Hamiltonian systems and HJB equations},
  volume~43.
\newblock Springer Science \& Business Media, 2012.

\end{thebibliography}

\end{document}